\newtheorem{lemma}{Lemma}
\newtheorem{theorem}[lemma]{Theorem}
\newtheorem{corollary}[lemma]{Corollary}
\DeclareMathOperator{\rk}{rk}
\DeclareMathOperator{\weight}{w}
\DeclareMathOperator{\RREF}{R}
\DeclareMathOperator{\disth}{d_h}
\DeclareMathOperator{\dists}{d_s}
\DeclareMathOperator{\distr}{d_r}
\DeclareMathOperator{\pivot}{p}
\newcommand{\F}{\mathbb{F}}
\newcommand{\sm}[1]{\left(\begin{smallmatrix}#1\end{smallmatrix}\right)}
\newcommand{\gaussmset}[2]{\left[\begin{smallmatrix}{#1}\\{#2}\end{smallmatrix}\right]}
\newcommand{\gaussmnum}[3]{\gaussmset{#1}{#2}_{#3}}
\begin{document}
\title{Generalized linkage construction for constant-dimension codes}

\author{Daniel~Heinlein\thanks{D. Heinlein is with the Department of Communications and Networking, Aalto University, Finland, e-mail: firstname.lastname@aalto.fi.}}

\maketitle

\begin{abstract}
A constant-dimension code (CDC) is a set of subspaces of constant dimension in a common vector space with upper bounded pairwise intersection.
We improve and generalize two constructions for CDCs, the \emph{improved linkage construction} and the \emph{parallel linkage construction}, to the \emph{generalized linkage construction} which in turn yields many improved lower bounds for the cardinalities of CDCs; a quantity not known in general.

\textbf{Keywords}: Finite projective spaces, constant-dimension codes, subspace codes, subspace distance, rank distance, maximum rank-distance codes, lifted maximum rank-distance bound, combinatorics.

\textbf{MSC classification}: 51E20; 94B65, 05B25.
\end{abstract}

\section{Introduction}
Let $V \cong \F_q^v$ be a $v$-dimensional vector space over a finite field with $q$ elements $\F_q$.
The set of all subspaces of $V$ forms a metric space  with respect to the so-called subspace-distance $\dists(U,W)=\dim(U+W)-\dim(U \cap W)$ cf.~\cite[Lemma~1]{MR2451015}.
A $(v,N,d;k)_q$ constant-dimension code (CDC) is a set of $k$-subspaces of $V$ of cardinality $N$ such that the subspace-distance of each pair of distinct elements, called codewords, is at least $d$.

Coding in this metric space was motivated by K\"{o}tter and Kschischang in~\cite{MR2451015}.
The main question of subspace coding in the constant-dimension case asks for the maximum cardinality $N$ of a $(v,N,d;k)_q$ code.
This maximum cardinality is denoted as $A_q(v,d;k)$.
The homepage \url{http://subspacecodes.uni-bayreuth.de/}, see also the manual~\cite{HKKW2016Tables}, lists the currently best known lower and upper bounds on $A_q(v,d;k)$ for $q \le 9$, $v \le 19$, all $d$, and all $k$.

Many good lower bounds for CDCs arise from linkage type constructions; Section~\ref{sec:prevlink} provides an overview.
This paper generalizes two already successful constructions, the improved linkage construction (Theorem~\ref{theo:linkageHK}) and the parallel linkage construction (Theorem~\ref{theo:linkageCHWX}), to the so-called generalized linkage construction (Theorem~\ref{theo:generalized_linkage}).

According to the ranking in the homepage, the improved linkage construction is among the best known constructions in $\approx 50.7\%$ of the listed parameters while the parallel linkage construction is among the best known constructions in $\approx 6.3\%$ of the listed parameters.
The generalized linkage construction is among the best known constructions in $\approx 52.5\%$ of the listed parameters.

As these numbers change if new bounds are introduced in the database, especially since most linkage type constructions refer back to smaller CDCs as building blocks, we prove in Lemma~\ref{lem:comparison_linkageCHWK} and Lemma~\ref{lem:comparison_linkageHK} that the generalized linkage construction is strictly better for an infinite family of parameters than the parallel linkage construction and the improved linkage construction, respectively.

The rest of the paper is organized as follows.
In Section~\ref{sec:preliminaries}, we introduce the notation used, in particular $q$-binomial coefficients, rank-metric codes and their sizes, and bounds needed for the comparison of the linkage constructions.
We need rank-metric codes having the additional property that each codeword has an upper bounded rank.

Bounds for the cardinalities of these \emph{rank-restricted rank-metric codes} and special cases are determined in Section~\ref{sec:considerLambda}.

Section~\ref{sec:prevlink} provides an overview over two families of linkage type constructions.
Both families are generalized in a single construction in Section~\ref{sec:genlink}.
For some parameters, the new construction is strictly better, as shown in Section~\ref{sec:comparison}.

\section{Preliminaries}\label{sec:preliminaries}

Throughout the paper, we will use the following notation and facts about $q$-binomial coefficients.
For prime powers $q \ge 2$ and $0 \le n$ we use the $q$-numbers $[n]_q = (q^n-1)/(q-1) = \sum_{i=0}^{n-1} q^i$, the $q$-factorials $[n]_q! = \prod_{i=1}^{n}[i]_q$, and the $q$-binomial coefficients $\gaussmnum{v}{k}{q} = \frac{[v]_q!}{[k]_q![v-k]_q!} = \prod_{i=0}^{k-1} \frac{q^{v}-q^{i}}{q^{k}-q^{i}}$ for $0 \le k \le v$.
An empty sum is defined to be $0$ and an empty product is $1$, so that $[0]_q=0$ and $[0]_q!=1$ in particular.
Note that $[n]_q <q^n$.
The set of $k$-subspaces in $\F_q^v$ is denoted as $\gaussmset{\F_q^v}{k}$ and its cardinality is $\gaussmnum{v}{k}{q}$.

The sizes of $q$-binomial coefficients can be estimated with
\begin{lemma}[{\cite[Lemma~4]{MR2451015}, cf.~\cite[Lemma~8]{ubtepub4049}}]\label{lem:qbinestim}
For all prime powers $q \ge 2$ and $0<k<v$, we have
\begin{align*}
1 < \gaussmnum{v}{k}{q} / q^{k(v-k)} < \left(\prod_{i=1}^{\infty} \left(1-q^{-i} \right) \right)^{-1} < 3.47
.
\end{align*}
\end{lemma}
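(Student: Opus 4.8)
The plan is to factor the explicit product representation $\gaussmnum{v}{k}{q} = \prod_{i=0}^{k-1}\frac{q^{v}-q^{i}}{q^{k}-q^{i}}$ so that the dominant power $q^{k(v-k)}$ is pulled out cleanly. Cancelling $q^{i}$ from the numerator and denominator of the $i$-th factor turns it into $\frac{q^{v-i}-1}{q^{k-i}-1}$, and extracting the leading power $q^{v-k}$ from each of the $k$ factors accounts exactly for the full $q^{k(v-k)}$. After substituting $j=k-i$, this should yield the clean identity
\begin{align*}
\frac{\gaussmnum{v}{k}{q}}{q^{k(v-k)}} = \prod_{j=1}^{k} \frac{1-q^{-(v-k+j)}}{1-q^{-j}},
\end{align*}
which will serve as the workhorse for all three inequalities.

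For the lower bound I would argue termwise: since $0<k<v$ forces $v-k\ge 1$, each exponent satisfies $v-k+j>j$, so $1-q^{-(v-k+j)} > 1-q^{-j} > 0$ and every factor strictly exceeds $1$; hence the whole product exceeds $1$. For the middle inequality I would discard the numerator, using that each factor $1-q^{-(v-k+j)}<1$, to obtain $\frac{\gaussmnum{v}{k}{q}}{q^{k(v-k)}} < \bigl(\prod_{j=1}^{k}(1-q^{-j})\bigr)^{-1}$; then, since the infinite product $\prod_{j=1}^{\infty}(1-q^{-j})$ has the same factors plus additional ones lying in $(0,1)$, it is strictly smaller than the finite partial product, so its reciprocal is strictly larger, giving exactly $\bigl(\prod_{i=1}^{\infty}(1-q^{-i})\bigr)^{-1}$.

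The final numeric bound is where the real work lies. The function $q \mapsto \prod_{i=1}^{\infty}(1-q^{-i})$ is increasing in $q$, because each factor increases toward $1$ as $q$ grows while all factors stay positive, so its reciprocal is decreasing and it suffices to treat the worst case $q=2$. I would then bound $\prod_{i=1}^{\infty}(1-2^{-i})$ from below by splitting off a finite prefix and controlling the tail via $\prod_{i>n}(1-2^{-i}) \ge 1 - \sum_{i>n} 2^{-i} = 1-2^{-n}$; a prefix of only a handful of terms already yields a lower bound of roughly $0.2887$ for the product, which lies above $1/3.47 \approx 0.2882$, so the reciprocal lands strictly under $3.47$. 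The main obstacle is precisely the tightness of this last estimate, since the true value of the reciprocal is near $3.463$ and leaves only a thin margin to $3.47$; I must retain enough prefix terms and use a valid tail inequality so that the rigorous lower bound on the product does not slip below $1/3.47$.
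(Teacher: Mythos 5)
Your proof is correct: the factorization $\gaussmnum{v}{k}{q}/q^{k(v-k)}=\prod_{j=1}^{k}\frac{1-q^{-(v-k+j)}}{1-q^{-j}}$ is right, both strict inequalities follow as you argue, and the numeric step goes through rigorously (with the prefix $n=5$ one gets $\prod_{i=1}^{5}(1-2^{-i})\cdot(1-2^{-5})=\frac{9765}{32768}\cdot\frac{31}{32}>0.2886>1/3.47$, using $\prod_{i>n}(1-2^{-i})\ge 1-\sum_{i>n}2^{-i}$ and monotonicity in $q$). The paper states this lemma without proof, citing K\"otter--Kschischang, and your argument is essentially the standard one from that source, so there is nothing to contrast.
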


We use the following well known connection between subspaces and full-rank matrices.
The reduced row echelon form of the matrix $A$ is denoted as $\RREF(A)$.
Then, the bijection between subspaces and their canonical basis in reduced row echelon form, written as rows of a matrix, is
\begin{align*}
\tau : \gaussmset{\F_q^v}{k} \rightarrow \left\{A \in \F_q^{k \times v} \mid \rk A = k \land A = \RREF(A) \right\},
\end{align*}
in particular, $U$ is the row-span of $\tau(U)$ for any subspace $U$.
We omit the dependency of $\tau$ on $q,k,v$ as the context determines them and we extend the codomain of $\tau(\cdot)$ by $\tau^{-1}(A) = \tau^{-1}(\RREF(A))$ for any matrix $A$ of full row-rank.

For a matrix $A$ in reduced row echelon form, $\pivot(A)$ is the binary vector with $\pivot(A)_i=1$ iff column $i$ is a pivot column in the matrix $A$.
We extend the domain of $\pivot(\cdot)$ by $\pivot(B)=\pivot(\RREF(B))$ for any matrix $B$ and $\pivot(U)=\pivot(\tau(U))$ for any subspace $U$.

The horizontal concatenation of matrices or vectors $A,B$ of compatible sizes and ambient fields is denoted as $A \mid B$.

In addition to the subspace-distance
\begin{align*}
&\dists(U,W)
\\=&\dim(U+W)-\dim(U \cap W)
\\=&\dim(U)+\dim(W)-2\dim(U \cap W)
\\=&2\dim(U+W)-\dim(U)-\dim(W)
\\=&2\rk\sm{\tau(U)\\\tau(W)}-\dim(U)-\dim(W)
\stepcounter{equation}\tag{\theequation}\label{math:dsrank}
\end{align*}
for two subspaces $U,W$ of a common vector space, we will also need the Hamming-distance $\disth(u,w)=\#\{i \mid u_i \ne w_i\}$ of two vectors $u,w$ in a common vector space, in particular the weight of $u$ defined as $\weight(u)=\disth(u,0)$, and the rank-distance $\distr(A,B)=\rk(A-B)$ of two matrices $A,B$ of compatible sizes and ambient fields.

It is well known that the Hamming-distance of pivot vectors of subspaces lower bounds their subspace-distance.
\begin{lemma}[{\cite[Lemma~2]{MR2589964}}]\label{lem:dhds}
Let $U$ and $W$ be two subspaces of a common vector space, then
\begin{align*}
\disth(\pivot(U),\pivot(W)) \le \dists(U,W).
\end{align*}
\end{lemma}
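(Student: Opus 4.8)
The plan is to translate the statement into a purely combinatorial inequality about pivot index sets and then prove it by a monotonicity argument. For a subspace $X$ write $P_X = \{j : \pivot(X)_j = 1\}$ for the set of pivot positions, so that $|P_X| = \rk \tau(X) = \dim X$, and, because the pivot vectors are binary, $\disth(\pivot(U),\pivot(W)) = |P_U \triangle P_W| = |P_U \cup P_W| - |P_U \cap P_W|$. Using the identity $\dists(U,W) = \dim(U+W) - \dim(U\cap W)$, the claim becomes the set-theoretic inequality $|P_U \cup P_W| - |P_U \cap P_W| \le \dim(U+W) - \dim(U \cap W)$.

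The engine of the proof is a characterization of pivot positions via projections. Let $\pi_j \colon \F_q^v \to \F_q^j$ be the projection onto the first $j$ coordinates. I would show that $j \in P_X$ if and only if the $j$-th unit vector of $\F_q^j$ lies in $\pi_j(X)$; equivalently, the number of pivots of $X$ among the first $j$ columns equals $\dim \pi_j(X)$. This is read off the reduced row echelon form of $\tau(X)$: a row with leading one in column $j$ is zero in columns $1,\dots,j-1$, so its image under $\pi_j$ is exactly that unit vector, while conversely the unit vector lying in $\pi_j(X)$ forces the drop-last-coordinate map $\pi_j(X) \to \pi_{j-1}(X)$ to have nontrivial kernel, i.e.\ a new pivot in column $j$. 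The crucial consequence is monotonicity: if $X \subseteq Y$ then $\pi_j(X) \subseteq \pi_j(Y)$ for every $j$, hence $P_X \subseteq P_Y$.

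With monotonicity in hand the inequality is essentially free. From $U \cap W \subseteq U$ and $U \cap W \subseteq W$ I get $P_{U\cap W} \subseteq P_U \cap P_W$, whence $|P_U \cap P_W| \ge |P_{U\cap W}| = \dim(U \cap W)$. Since $|P_U \cup P_W| + |P_U \cap P_W| = |P_U| + |P_W| = \dim U + \dim W = \dim(U+W) + \dim(U\cap W)$, this single bound also yields $|P_U \cup P_W| \le \dim(U+W)$, and subtracting the two gives exactly $\disth(\pivot(U),\pivot(W)) \le \dists(U,W)$. (Alternatively one obtains $P_U \cup P_W \subseteq P_{U+W}$ directly from $U, W \subseteq U+W$, which is the dual of the same statement.)

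The only genuine obstacle is the pivot characterization itself: one must argue carefully from the reduced row echelon structure that pivot positions are detected by the projections $\pi_j$, and in particular that they behave monotonically under inclusion of subspaces. Everything after that is counting with $|P_X| = \dim X$ and the dimension formula for $\dists$, so I expect no further difficulty.
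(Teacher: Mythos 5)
The paper does not prove this lemma at all; it is imported verbatim as \cite[Lemma~2]{MR2589964}, so there is no in-paper argument to compare against. Judged on its own, your proof is correct and complete in outline. The one load-bearing step is the characterization that the number of pivot columns of $\tau(X)$ among columns $1,\dots,j$ equals $\dim \pi_j(X)$: the rows of $\tau(X)$ with pivot in a column $\le j$ project to vectors whose leading ones sit in distinct positions, hence stay independent, while the remaining rows project to zero; this gives both your ``unit vector'' criterion and the monotonicity $X \subseteq Y \Rightarrow P_X \subseteq P_Y$. From there the counting is airtight: $\lvert P_U \cap P_W\rvert \ge \lvert P_{U\cap W}\rvert = \dim(U\cap W)$ together with $\lvert P_U\rvert + \lvert P_W\rvert = \lvert P_U \cup P_W\rvert + \lvert P_U \cap P_W\rvert$ yields
\begin{align*}
\disth(\pivot(U),\pivot(W)) = \dim U + \dim W - 2\lvert P_U \cap P_W\rvert \le \dim U + \dim W - 2\dim(U\cap W) = \dists(U,W),
\end{align*}
which is exactly the form of the subspace distance used in Equation~\eqref{math:dsrank}. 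This is essentially the standard argument (and the same mechanism as in the cited source, where the pivot vector is called the identifying vector); the only thing I would ask you to write out in full is the projection characterization itself, since you correctly identify it as the sole nontrivial ingredient.
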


We will apply
\begin{align}\label{math:dheq}
\disth(u \mid u',v \mid v') = \disth(u,v)+\disth(u',v')
\end{align}
for any $q$-ary vectors of compatible lengths and the lower bound
\begin{align}\label{math:dhlb}
|\weight(u)-\weight(v)| \le \disth(u,v)
\end{align}
for two binary vectors $u,v$ of equal length, as any pivot vector is a binary vector.

We will also make use of
\begin{align*}
\rk(X) \le &\rk(X \mid Y) \le \rk(X) + \rk(Y) \stepcounter{equation}\tag{\theequation}\label{math:rkineq} \\
&\rk(X + Y) \le \rk(X) + \rk(Y) \stepcounter{equation}\tag{\theequation}\label{math:subaddmat} \\
\rk(X) + \rk(Y) -n \le &\rk(X \!\cdot\! Y) \le \min\{\rk(X),\rk(Y)\} \stepcounter{equation}\tag{\theequation}\label{math:sylvesterandtrivmat}
\end{align*}
for any matrices $X,Y$ of compatible sizes and ambient fields, such that $n$ is the number of columns of $X$.

A rank-metric code (RMC) is a subset of $\F_q^{a \times b}$ of cardinality $N$ such that the rank-distance of each pair of codewords is at least $d$.
An RMC is called linear, if it is a subspace of $\F_q^{a \times b}$.
These parameters are abbreviated as $(a \times b,N,d)_q$.
If in addition the rank of each codeword is at most $u$, we augment the notation to $(a \times b,N,d;u)_q$ and refer to it as rank-restricted RMC (RRMC).
The maximum size of an $(a \times b,N,d;u)_q$ RMC is denoted as $\Lambda(q,a,b,d,u)$.
For a $(k \times n,N,d)_q$ RMC $\mathcal{R}$, the lifted RMC of $\mathcal{R}$ is defined as $\{\tau^{-1}(I \mid R) : R \in \mathcal{R} \}$.
It is a $(k+n,N,2d;k)_q$ CDC.

Delsarte~\cite{MR514618} and Gabidulin~\cite{MR791529} determined the maximum cardinality
\begin{align*}
M(q,a,b,d) = \left\lceil q^{\max\{a,b\}(\min\{a,b\}-d+1)} \right\rceil
\end{align*}
of RMCs for all parameters $q,a,b,d$, and $\min\{a,b\} \le u$ and gave constructions to build bound-achieving RMC codes, so-called maximum rank-distance (MRD) codes.

The theory of Delsarte in~\cite{MR514618} allows to determine the rank-distribution in a linear MRD code.
In his words, a linear MRD code in $\F_q^{a \times b}$ and minimum rank-distance $d$ is equivalent to an $(\min\{a,b\},\max\{a,b\},\min\{a,b\}-d+1,q)$-Singleton system and can be seen as a $(d-1)$-codesign of cardinality $q^{\max\{a,b\}(\min\{a,b\}-d+1)}$, which is a set of bilinear forms $X \subseteq \left\{f : \F_q^{\min\{a,b\}} \times \F_q^{\max\{a,b\}} \to \F_q \mid f \text{ bilinear form} \right\}$ such that $\rk(f-g)>d-1$ for all $f \ne g \in X$.

\begin{theorem}[{\cite[Theorem~5.6]{MR514618}, cf.~\cite[Corollary~26]{de2015rank}}]\label{theo:rankdistributionMRD}
The number of matrices with rank $r$ ($d \le r \le \min\{a,b\}$) in a linear MRD code in $\F_q^{a \times b}$ and minimum rank-distance $d$ is given by
\begin{align*}
&D(q,a,b,d,r)
\\=&\gaussmnum{\min\{a,b\}}{r}{q} \cdot \sum_{i=0}^{r-d}
(-1)^i q^{\binom{i}{2}} \gaussmnum{r}{i}{q} \left( q^{\max\{a,b\}(r-d+1-i)}-1 \right).
\end{align*}
\end{theorem}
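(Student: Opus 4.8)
The plan is to reduce to the subcode structure of the MRD code $\mathcal{C}$ determined by column spaces and to recover the rank distribution by M\"obius inversion in the subspace lattice. Since $\rk(X)=\rk(X^{\mathsf{T}})$ and transposing an MRD code yields an MRD code with $a,b$ interchanged, and since the claimed expression is symmetric in $a$ and $b$, I may assume without loss of generality that $a\le b$; write $m=a$ and $M=b$. For a subspace $U\subseteq \F_q^{m}$ let $\mathcal{C}_U=\{c\in\mathcal{C} : \operatorname{col}(c)\subseteq U\}$ be the set of codewords whose column space is contained in $U$, and set $g(U)=|\mathcal{C}_U|$. A rank-$r$ matrix has an $r$-dimensional column space, so once $g$ is known I can count codewords with a prescribed column space by inclusion--exclusion and then sum over the $\gaussmnum{m}{r}{q}$ subspaces of dimension $r$.

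The technical heart is the identity $g(U)=q^{M\max\{0,\,s-d+1\}}$ for every $s$-dimensional $U$, which I would prove by a two-sided estimate. Fixing a full-column-rank $m\times s$ matrix $B$ with column span $U$, the assignment $c''\mapsto B c''$ is a rank-preserving bijection from $\F_q^{s\times M}$ onto the matrices with column space in $U$, so $\mathcal{C}_U$ is equivalent to an $s\times M$ rank-metric code of minimum distance at least $d$; the Singleton/MRD bound then gives $g(U)\le q^{M(s-d+1)}$ when $s\ge d$, while $g(U)=1$ when $s\le d-1$ since a nonzero such codeword would have rank below $d$. For the matching lower bound when $s\ge d$, I choose an $(m-s)\times m$ matrix $P$ of rank $m-s$ with null space $U$; then $\phi:\mathcal{C}\to\F_q^{(m-s)\times M}$, $c\mapsto Pc$, is $\F_q$-linear with $\ker\phi=\mathcal{C}_U$, whence
\begin{align*}
g(U)=|\ker\phi|=\frac{|\mathcal{C}|}{|\operatorname{im}\phi|}\ge \frac{q^{M(m-d+1)}}{q^{(m-s)M}}=q^{M(s-d+1)},
\end{align*}
using $|\mathcal{C}|=q^{M(m-d+1)}$ and $\operatorname{im}\phi\subseteq\F_q^{(m-s)\times M}$. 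The two bounds coincide, so $g$ depends only on $s=\dim U$; this is the step I expect to be the crux, since everything downstream is formal once the subcode sizes are pinned down exactly.

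With $g$ in hand I would apply M\"obius inversion in the lattice of subspaces of $\F_q^m$, whose M\"obius function is $\mu(W,U)=(-1)^{\dim U-\dim W}q^{\binom{\dim U-\dim W}{2}}$. For a fixed $U$ with $\dim U=r$ the number $h$ of codewords with column space exactly $U$ is
\begin{align*}
h=\sum_{W\subseteq U}\mu(W,U)\,g(W)=\sum_{j=0}^{r}(-1)^{r-j}q^{\binom{r-j}{2}}\gaussmnum{r}{j}{q}\,g(j),
\end{align*}
where $g(j)=q^{M\max\{0,\,j-d+1\}}$ and $\gaussmnum{r}{j}{q}$ counts the $j$-dimensional $W\subseteq U$; this value is independent of the chosen $U$. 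Substituting $i=r-j$, splitting off the terms with $j\ge d$ (for which $g(j)=q^{M(r-d+1-i)}$) from those with $j<d$ (where $g(j)=1$), and applying the $q$-binomial identity $\sum_{i=0}^{r}(-1)^{i}q^{\binom{i}{2}}\gaussmnum{r}{i}{q}=0$ (valid for $r\ge1$, which holds since $d\le r$ and $d\ge1$) to fold the $j<d$ contribution into the $-1$ summands, I obtain
\begin{align*}
h=\sum_{i=0}^{r-d}(-1)^{i}q^{\binom{i}{2}}\gaussmnum{r}{i}{q}\left(q^{M(r-d+1-i)}-1\right).
\end{align*}
Finally, multiplying by the number $\gaussmnum{m}{r}{q}$ of $r$-dimensional column spaces and recalling $m=\min\{a,b\}$, $M=\max\{a,b\}$ yields the claimed value of $D(q,a,b,d,r)$.
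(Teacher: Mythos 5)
Your proposal is correct, but note that the paper does not prove this statement at all: it is imported verbatim from Delsarte \cite[Theorem~5.6]{MR514618} (cf.~\cite[Corollary~26]{de2015rank}), so there is no in-paper argument to compare against. What you supply is a valid self-contained derivation, and each step checks out: the reduction to $a\le b$ by transposition is legitimate because the stated formula is symmetric in $a,b$; the upper bound $g(U)\le q^{M(s-d+1)}$ for $s\ge d$ follows from the Singleton bound after the rank-preserving identification $c''\mapsto Bc''$ (and $g(U)=1$ for $s\le d-1$ since a nonzero codeword of $\mathcal{C}_U$ would have rank at most $s<d$); the matching lower bound via $|\ker\phi|=|\mathcal{C}|/|\operatorname{im}\phi|$ correctly uses the linearity hypothesis and $\ker P=U$; and the M\"obius inversion with $\mu(W,U)=(-1)^{\dim U-\dim W}q^{\binom{\dim U-\dim W}{2}}$ together with the identity $\sum_{i=0}^{r}(-1)^iq^{\binom{i}{2}}\gaussmnum{r}{i}{q}=0$ (valid since $r\ge d\ge 1$) reproduces the claimed sum exactly. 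Your route — pinning down the exact size of every ``column-space-restricted'' subcode and then inverting over the subspace lattice — is the elementary combinatorial approach and differs from Delsarte's original derivation through association schemes and MacWilliams-type identities; it buys self-containedness at the price of invoking the Singleton bound for the shortened codes, which the paper in any case records as $M(q,a,b,d)$. The one point worth making explicit if you write this up is that the equality $g(U)=q^{M(s-d+1)}$ forces $\operatorname{im}\phi$ to be all of $\F_q^{(m-s)\times M}$ — you do not need this, but it explains why the two-sided estimate closes with no slack.
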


We abbreviate
\begin{align*}
\Delta(q,a,b,d,u)
=
1+\sum_{i=d}^{\min\{u,a,b\}} D(q,a,b,d,i)
\end{align*}
which is the size of the largest subset of a linear MRD code in $\F_q^{a \times b}$ and minimum rank-distance $d$ such that the rank of each included matrix is at most $u$.

We deliberately allow matrices with zero rows or zero columns and count sets containing only one such matrix with cardinality one and denote the all-zero matrix with $0$ and the identity matrix with $I$.

Theorem~\ref{theo:rankdistributionMRD} and $\Delta(q,a,b,d,u)$ provide only a construction for $(a \times b,N,d;u)_q$ RMC.
In fact, we have
\begin{align*}
\Delta(q,a,b,d,u) \le \Lambda(q,a,b,d,u) \le M(q,a,b,d)
.
\stepcounter{equation}\tag{\theequation}\label{math:ineqDeltaLambdaM}
\end{align*}

The number of matrices of a given rank in a finite vector space is well known.
\begin{theorem}[{\cite{MR1580299},\cite[Theorem~2]{MR1533848}}]\label{theo:NumberMatricesRankRFiniteField}
The number of matrices in $\F_q^{a \times b}$ of rank $r$ is
\begin{align*}
\prod_{i=0}^{r-1} \frac{(q^{a}-q^{i})(q^{b}-q^{i})}{q^{r}-q^{i}}
=
q^{\binom{r}{2}}(q-1)^r[r]_q!\gaussmnum{a}{r}{q}\gaussmnum{b}{r}{q}
.
\end{align*}
\end{theorem}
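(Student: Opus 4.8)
The plan is to count the rank-$r$ matrices by first fixing their column space and then counting the matrices realizing each prescribed image. A matrix $M \in \F_q^{a \times b}$ of rank $r$ has a column space that is an $r$-dimensional subspace of $\F_q^a$, and there are exactly $\gaussmnum{a}{r}{q}$ such subspaces. First I would argue that the number of matrices $M$ whose column space equals a \emph{fixed} $r$-subspace $W \subseteq \F_q^a$ is independent of $W$: the group $\mathrm{GL}_a(\F_q)$ acts transitively on the $r$-subspaces, and left multiplication by $g \in \mathrm{GL}_a(\F_q)$ sends $M$ to $gM$, preserving rank and carrying column space $W$ to $gW$. Hence the number of rank-$r$ matrices equals $\gaussmnum{a}{r}{q}$ times the number of matrices with one fixed column space $W \cong \F_q^r$.

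The key step is then to identify the matrices $M \in \F_q^{a \times b}$ with column space exactly $W$: reading the columns of $M$ as the images of the standard basis of $\F_q^b$, these are precisely the surjective linear maps $\F_q^b \twoheadrightarrow W$. After fixing an isomorphism $W \cong \F_q^r$, such maps correspond to the full-row-rank matrices in $\F_q^{r \times b}$, i.e.\ to ordered tuples of $r$ linearly independent rows in $\F_q^b$. Choosing these rows one at a time gives $\prod_{i=0}^{r-1}(q^b - q^i)$ matrices, since after $i$ independent rows have been selected the next must avoid their $q^i$-element span. Multiplying by $\gaussmnum{a}{r}{q}$ and invoking the product formula $\gaussmnum{a}{r}{q} = \prod_{i=0}^{r-1}\frac{q^a-q^i}{q^r-q^i}$ from the preliminaries immediately yields the first claimed expression $\prod_{i=0}^{r-1}\frac{(q^a-q^i)(q^b-q^i)}{q^r-q^i}$.

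Finally I would derive the second, closed form by rewriting $\prod_{i=0}^{r-1}(q^b-q^i) = \gaussmnum{b}{r}{q}\prod_{i=0}^{r-1}(q^r-q^i)$, again via the product formula, and evaluating $\prod_{i=0}^{r-1}(q^r-q^i) = \lvert\mathrm{GL}_r(\F_q)\rvert$. Factoring $q^i$ out of each term gives $\prod_{i=0}^{r-1}(q^r-q^i) = q^{\binom{r}{2}}\prod_{j=1}^{r}(q^j-1) = q^{\binom{r}{2}}(q-1)^r[r]_q!$, which combined with the previous paragraph produces $q^{\binom{r}{2}}(q-1)^r[r]_q!\gaussmnum{a}{r}{q}\gaussmnum{b}{r}{q}$. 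There is no genuine obstacle here; the content is a transitive group action together with a standard count of linearly independent vectors. The only points needing care are verifying that the count of matrices with a given column space does not depend on the chosen subspace (handled by the $\mathrm{GL}_a$-action) and the $q$-analogue bookkeeping reconciling the two displayed closed forms. An equally short alternative would use a rank factorization $M = BC$ with $B \in \F_q^{a\times r}$ and $C \in \F_q^{r \times b}$ of full rank, dividing the product of the two full-rank counts by $\lvert\mathrm{GL}_r(\F_q)\rvert$ to account for the ambiguity $M = (Bg)(g^{-1}C)$.
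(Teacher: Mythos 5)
Your proof is correct. There is, however, nothing in the paper to compare it against: Theorem~\ref{theo:NumberMatricesRankRFiniteField} is quoted as a classical result (Landsberg~1893; Fisher--Alexander~1966) and the paper supplies no proof of its own. Your argument --- choose the $r$-dimensional column space in $\gaussmnum{a}{r}{q}$ ways, identify the matrices with that fixed image with the surjections $\F_q^b \twoheadrightarrow W$, i.e.\ the $\prod_{i=0}^{r-1}(q^b-q^i)$ full-row-rank matrices in $\F_q^{r\times b}$, and then reconcile the two displayed forms via $\prod_{i=0}^{r-1}(q^r-q^i)=q^{\binom{r}{2}}(q-1)^r[r]_q!$ --- is the standard derivation and is carried out accurately; the rank-factorization variant $M=BC$ with division by $\lvert\mathrm{GL}_r(\F_q)\rvert$ that you sketch at the end is an equally valid one-line alternative.
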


Due to the following lemma, we can without loss of generality restrict the parameters of a CDC to $2 \le d/2 \le k \le v/2$, see~\cite[Page~33f.]{ubtepub4049} or~\cite[Page~4822]{MR3988525} for an extensive discussion.
\begin{lemma}[{\cite[Page~3582, Equation~4]{MR2451015}}]\label{lem:AqvdkEqAqvdvmk}
For all $q$ and $2 \le d/2 \le k \le v$, we have
\begin{align*}
A_q(v,d;k) = A_q(v,d;v-k).
\end{align*}
\end{lemma}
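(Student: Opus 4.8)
The plan is to exhibit an explicit bijection between $(v,N,d;k)_q$ codes and $(v,N,d;v-k)_q$ codes, thereby showing the two maximum cardinalities coincide. The natural map to use is the \emph{orthogonal complement} (dual) with respect to a fixed non-degenerate bilinear form on $V \cong \F_q^v$: send a subspace $U$ of dimension $k$ to its orthogonal complement $U^\perp$, which has dimension $v-k$. First I would recall the two standard facts about $\perp$ on a finite-dimensional space equipped with a non-degenerate form, namely $\dim(U^\perp) = v - \dim(U)$ and the reversal identities $(U+W)^\perp = U^\perp \cap W^\perp$ and $(U \cap W)^\perp = U^\perp + W^\perp$. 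Applying $\dim$ to these, together with the dimension-complement formula, gives $\dim(U^\perp + W^\perp) = v - \dim(U \cap W)$ and $\dim(U^\perp \cap W^\perp) = v - \dim(U+W)$.

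With these in hand, the core computation is immediate: using the definition $\dists(U,W) = \dim(U+W) - \dim(U \cap W)$ from the excerpt, I would write
\begin{align*}
\dists(U^\perp, W^\perp)
&= \dim(U^\perp + W^\perp) - \dim(U^\perp \cap W^\perp) \\
&= \bigl(v - \dim(U \cap W)\bigr) - \bigl(v - \dim(U+W)\bigr) \\
&= \dim(U+W) - \dim(U \cap W)
= \dists(U,W).
\end{align*}
Thus $U \mapsto U^\perp$ is an isometry for the subspace distance. Since it is a bijection from $\gaussmset{\F_q^v}{k}$ to $\gaussmset{\F_q^v}{v-k}$ (its inverse being $\perp$ again, as $(U^\perp)^\perp = U$ on a non-degenerate form), it maps any $(v,N,d;k)_q$ code to a $(v,N,d;v-k)_q$ code of the same cardinality with the same minimum distance, and conversely. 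Taking the maximum over all codes on each side yields $A_q(v,d;k) = A_q(v,d;v-k)$.

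I anticipate no serious obstacle here; the result is essentially a restatement of the duality properties of the subspace distance. The only point requiring mild care is ensuring the hypothesis $2 \le d/2 \le k \le v$ is actually used only to guarantee that both $k$ and $v-k$ are admissible dimensions (i.e. $0 \le v-k \le v$), so that both codes live in genuine Grassmannians and the distance comparison is meaningful; the bijection and isometry arguments themselves hold for every $0 \le k \le v$. Since the statement is cited verbatim from \cite{MR2451015}, I would keep the argument to the short isometry verification above rather than reproving the standard facts about orthogonal complements.
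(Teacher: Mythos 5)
Your proposal is correct: the orthogonal-complement map is a distance-preserving bijection between $\gaussmset{\F_q^v}{k}$ and $\gaussmset{\F_q^v}{v-k}$, which immediately gives $A_q(v,d;k)=A_q(v,d;v-k)$, and this is exactly the standard duality argument underlying the cited reference \cite{MR2451015}. The paper itself offers no proof beyond that citation, so there is nothing further to compare.
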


We will use the following lower bound by Cossidente and Pavese to compare the generalized linkage construction (Theorem~\ref{theo:generalized_linkage}) to the parallel linkage construction (Theorem~\ref{theo:linkageCHWX_trivialimprovement}) in Lemma~\ref{lem:comparison_linkageCHWK} and to the improved linkage construction (Theorem~\ref{theo:linkageHK}) in Lemma~\ref{lem:comparison_linkageHK}.
\begin{theorem}[{\cite[Theorem~4.3]{MR3759908}}]\label{theo:lbAq844}
\begin{align*}
A_q(8,4;4) \ge q^{12}+q^2(q^2+1)^2(q^2+q+1)+1.
\end{align*}
\end{theorem}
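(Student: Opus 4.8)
The plan is to prove the bound constructively, by exhibiting an explicit $(8,N,4;4)_q$ CDC with $N=q^{12}+q^2(q^2+1)^2(q^2+q+1)+1$ codewords. By~\eqref{math:dsrank}, two $4$-subspaces $U,W$ of $\F_q^8$ satisfy $\dists(U,W)\ge 4$ exactly when $\dim(U+W)\ge 6$, i.e.\ when $\dim(U\cap W)\le 2$; so the goal is a set of $4$-subspaces meeting pairwise in dimension at most $2$. I would fix the decomposition $\F_q^8=\F_q^4\oplus\F_q^4$ and read the three summands $q^{12}$, $q^2(q^2+1)^2(q^2+q+1)$, and $1$ as three disjoint families of codewords, verifying the intersection condition within each family and across each of the three pairs of families.

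First I would realize the dominant term $q^{12}$ as a lifted MRD code. A linear MRD code $\mathcal{R}\subseteq\F_q^{4\times4}$ of minimum rank-distance $2$ has size $M(q,4,4,2)=q^{4(4-2+1)}=q^{12}$, and its lift $\{\tau^{-1}(I\mid R):R\in\mathcal{R}\}$ is a $(8,q^{12},4;4)_q$ CDC, every codeword having pivot vector $(1,1,1,1,0,0,0,0)$. The single extra codeword (the $+1$) I would take to be $\tau^{-1}(0\mid I)$, with pivot $(0,0,0,0,1,1,1,1)$: since $(x,xR)=(0,y)$ forces $x=0$, it is disjoint from every lifted codeword and hence at subspace-distance $8$ from the whole MRD part.

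The genuine content is the middle family of $q^2(q^2+1)^2(q^2+q+1)$ codewords, which must be designed so that (a) its members pairwise intersect in dimension at most $2$, and (b) each of them meets every lifted codeword and the extra codeword in dimension at most $2$. The factorization is strongly suggestive of a geometric origin: $q^2+q+1=[3]_q$ is a plane point-count, $q^2+1$ is the size of an ovoid (elliptic quadric) in a $3$-dimensional space, and the leftover $q^2$ is an affine count. I would therefore build these subspaces from a fixed quadric- or Veronese-type variety placed in the second block and extended into the first block by controlled ``slope'' data, parametrising each codeword by a base object together with a compatible slope matrix, and expect the product $q^2(q^2+1)^2(q^2+q+1)$ to emerge as (number of base objects) $\times$ (number of admissible slopes).

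The main obstacle I expect is the cross-family analysis in step (b). Bounding $\dim(U\cap W)$ between a structured codeword $U$ and a lifted codeword $W$ reduces, via~\eqref{math:dsrank} and~\eqref{math:sylvesterandtrivmat}, to a rank estimate on a block matrix coupling the MRD matrix $R$ with the geometric parameters of $U$; forcing this rank to be at least $6$ in all cases is precisely where the MRD property (every nonzero codeword has rank at least $2$) must interact correctly with the chosen variety, and I anticipate this to be the delicate step. The intra-family condition (a) I would dispatch by exploiting the distance structure of the underlying variety—distinct base objects forcing large rank in the second block, equal base objects forcing large rank in the first—while the remaining easy cross-cases, where the pivot supports already differ in at least four coordinates, follow immediately from Lemma~\ref{lem:dhds} together with~\eqref{math:dheq}.
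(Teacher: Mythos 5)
First, a point of orientation: the paper does not prove this statement at all --- it is imported verbatim from Cossidente and Pavese \cite[Theorem~4.3]{MR3759908} and is used here only as an ingredient in Lemma~\ref{lem:comparison_linkageCHWK} and Lemma~\ref{lem:comparison_linkageHK}. So there is no internal proof to compare yours against, and your attempt has to stand on its own as a proof of the lower bound.

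Judged that way, it has a genuine gap, and the gap is essentially the whole theorem. What you actually construct is the lifted MRD code together with the single extra codeword $\tau^{-1}(0 \mid I)$; that yields only $A_q(8,4;4) \ge q^{12}+1$, which is exactly the plain linkage bound (Theorem~\ref{theo:linkageGLTSHT} with $r=s=4$) and is weaker even than the improved linkage bound $q^{12}+q^4+q^2+1$. The middle family of $q^2(q^2+1)^2(q^2+q+1) \in \Theta(q^{10})$ codewords is never exhibited: ``build these subspaces from a quadric- or Veronese-type variety and expect the product to emerge'' is a research programme, not a construction, and the step you yourself flag as delicate (the rank estimate coupling a candidate codeword with an arbitrary MRD matrix) is precisely where all the content lies. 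Worse, your frame makes that step provably rigid rather than merely delicate: since your code contains the full \emph{linear} LMRD code, no further codeword $\tau^{-1}(X \mid Y)$ with $\rk X = 3$ can ever be added --- the map $R \mapsto XR$ is injective on the $12$-dimensional MRD code and therefore surjects onto all matrices with columns in the column space of $X$, so some $R$ makes $\rk(Y-XR) \le 1$ and hence $\dists(\tau^{-1}(X\mid Y),\tau^{-1}(I\mid R)) = 2\rk(Y-XR) \le 2 < d$. Consequently every middle-family codeword must meet $W=\tau^{-1}(0 \mid I)$ in dimension at least $2$, and at most $2$ because of the distance to $W$ itself, so the family is fibred over a set of lines of $\mathrm{PG}(W)\cong\mathrm{PG}(3,q)$ (note $q^2(q^2+1)^2(q^2+q+1) = q^2(q^2+1)\cdot\gaussmnum{4}{2}{q}$, i.e.\ $q^2(q^2+1)$ codewords over each of the $\gaussmnum{4}{2}{q}$ lines). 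Identifying such a configuration and verifying all pairwise intersection conditions is exactly the nontrivial geometric work carried out in \cite{MR3759908}, and none of it appears in your proposal; as written, it establishes only $A_q(8,4;4)\ge q^{12}+1$.
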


For CDCs having $d=2k$, i.e., the minimum subspace-distance is as large as possible, and the dimension of the ambient vector space is a multiple of the dimension of the codewords, Beutelspacher showed that there are bound-achieving codes.
This setting is often referred to as \emph{spread}.
\begin{theorem}[{\cite{MR404010}}]\label{theo:spread}
For all $q$, $1 \le k$, and $k \mid v$, we have
\begin{align*}
A_q(v,2k;k)=[v]_q/[k]_q
.
\end{align*}
\end{theorem}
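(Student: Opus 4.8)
The plan is to establish the equality by proving matching upper and lower bounds. First I would observe that the condition $d=2k$ forces the minimum subspace-distance between distinct codewords to be $2k$, which by the identity $\dists(U,W)=2k-2\dim(U\cap W)$ (a special case of the displayed expansions in \eqref{math:dsrank} with $\dim U=\dim W=k$) is equivalent to demanding $\dim(U\cap W)=0$ for all distinct codewords $U,W$. Thus a $(v,N,2k;k)_q$ CDC is exactly a collection of $k$-subspaces that pairwise intersect trivially, i.e.\ a partial spread. For the upper bound I would use a counting argument: each $k$-subspace contains exactly $[k]_q=(q^k-1)/(q-1)$ one-dimensional subspaces, and pairwise trivial intersection means no $1$-subspace of $V$ lies in two codewords. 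Since $V\cong\F_q^v$ contains $[v]_q$ one-dimensional subspaces in total, we obtain $N\cdot[k]_q\le[v]_q$, hence $A_q(v,2k;k)\le[v]_q/[k]_q$.

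The more substantial direction is the lower bound, where I must show that when $k\mid v$ this counting bound is actually attained, so that the partial spread is in fact a \emph{spread} partitioning all nonzero vectors (equivalently all $1$-subspaces) of $V$. The standard approach is the field-extension construction: since $k\mid v$, write $v=km$ and regard $V\cong\F_q^v$ as the $m$-dimensional vector space $\F_{q^k}^m$ over the larger field $\F_{q^k}$. Every $1$-dimensional $\F_{q^k}$-subspace of $\F_{q^k}^m$ is, by restriction of scalars, a $k$-dimensional $\F_q$-subspace of $\F_q^v$, and two distinct such lines meet only in the zero vector. The set of all $\F_{q^k}$-lines through the origin therefore yields a family of pairwise-trivially-intersecting $k$-subspaces whose count is $\gaussmnum{m}{1}{q^k}=(q^{km}-1)/(q^k-1)=(q^v-1)/(q^k-1)=[v]_q/[k]_q$.

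Combining the two bounds gives $A_q(v,2k;k)=[v]_q/[k]_q$ as claimed. The main obstacle is verifying the lower-bound construction rigorously: one must confirm that restriction of scalars sends an $\F_{q^k}$-line to a genuine $k$-dimensional $\F_q$-subspace and that distinct $\F_{q^k}$-lines give $\F_q$-subspaces intersecting only in $0$. This follows because the $\F_{q^k}$-lines partition $\F_{q^k}^m\setminus\{0\}=\F_q^v\setminus\{0\}$ into disjoint classes, each of size $q^k-1$ as a pointset, so that the corresponding $k$-subspaces cover every nonzero vector exactly once and any two share no nonzero vector; the $\F_q$-dimension count $[v]_q/[k]_q$ then saturates the upper bound, certifying this is a full spread. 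The divisibility hypothesis $k\mid v$ is precisely what makes $\F_q^v$ an $\F_{q^k}$-vector space, and is therefore indispensable to the construction.
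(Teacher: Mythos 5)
Your proof is correct: the counting upper bound (each codeword pairwise-trivially intersecting the others uses up $[k]_q$ of the $[v]_q$ one-dimensional subspaces) combined with the field-reduction construction of a Desarguesian spread when $k \mid v$ is exactly the classical argument behind this result, and the paper itself states the theorem only as a citation to Beutelspacher without reproving it. Nothing is missing.
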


An easy to use yet strong upper bound for CDCs is the so-called \emph{Anticode bound}.
\begin{theorem}[{\cite[Theorem~5.2]{MR1984479},~\cite[Theorem~1]{MR2810308}}]\label{theo:anticode}
For all $q$ and $2 \le d/2 \le k \le v$, we have
\begin{align*}
A_q(v,d;k) \le \gaussmnum{v}{k-d/2+1}{q} / \gaussmnum{k}{k-d/2+1}{q}
.
\end{align*}
\end{theorem}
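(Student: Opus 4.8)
The plan is to prove the bound by a double-counting (incidence) argument, counting flags consisting of a codeword together with a subspace of it of an intermediate dimension. Fix a $(v,N,d;k)_q$ CDC $C$ with $N=|C|$ and set $t=k-d/2+1$; the hypotheses $2 \le d/2 \le k \le v$ guarantee $1 \le t \le k \le v$, so both $q$-binomial coefficients appearing in the claim are well defined and positive. Since all codewords have dimension $k$, the expression for the subspace-distance yields $\dists(U,W)=2k-2\dim(U \cap W)$ for any two codewords $U,W$, so the defining property $\dists(U,W) \ge d$ is equivalent to $\dim(U \cap W) \le k-d/2 = t-1$.

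Next I would count the set $\mathcal{P}$ of incident pairs $(T,U)$ where $U \in C$ and $T \in \gaussmset{U}{t}$. Enumerating by the second coordinate, every codeword $U$ is a $k$-subspace and hence contains exactly $\gaussmnum{k}{t}{q}$ subspaces of dimension $t$, so that $|\mathcal{P}| = N \cdot \gaussmnum{k}{t}{q}$.

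The heart of the argument is the complementary count by the first coordinate, for which I would show that each $t$-subspace $T$ of $V$ lies in at most one codeword. Suppose $T \subseteq U_1$ and $T \subseteq U_2$ for distinct $U_1,U_2 \in C$. Then $T \subseteq U_1 \cap U_2$, whence $\dim(U_1 \cap U_2) \ge t = k-d/2+1 > t-1$, contradicting the equivalent distance condition established above. Consequently, summing over all $t$-subspaces $T$ of $V$ gives $|\mathcal{P}| \le \gaussmnum{v}{t}{q}$, and combining the two counts yields $N \cdot \gaussmnum{k}{t}{q} \le \gaussmnum{v}{t}{q}$, which is the asserted inequality after dividing by the positive quantity $\gaussmnum{k}{t}{q}$.

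I expect no serious obstacle: the identity $\dists(U,W)=2k-2\dim(U \cap W)$ is immediate from the displayed formula for the subspace-distance, and the injectivity-type claim is a one-line consequence. The only points requiring care are confirming that $t$ lies in the range $1 \le t \le k$ so that the double count is nonvacuous (indeed $d/2 \ge 2$ even forces $t \le k-1$), and noting that $d$ is implicitly even so that $t$ is an integer, which is automatic since the subspace-distance between two $k$-subspaces is always even.
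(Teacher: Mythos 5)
Your proof is correct. Note first that the paper itself offers no proof of this statement: Theorem~\ref{theo:anticode} is imported verbatim from \cite{MR1984479} and \cite{MR2810308}, so there is no in-paper argument to compare against. Your double-counting argument is sound and complete: with $t=k-d/2+1$ the hypotheses give $1\le t\le k-1$, the reformulation $\dists(U,W)=2k-2\dim(U\cap W)\ge d \Leftrightarrow \dim(U\cap W)\le t-1$ is immediate from the displayed distance formula, and the key observation that no $t$-subspace of $V$ can lie in two distinct codewords turns the count of flags $(T,U)$ into $N\cdot\gaussmnum{k}{t}{q}\le\gaussmnum{v}{t}{q}$, which is the claim. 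This is the standard packing-style derivation (essentially the route of \cite[Theorem~5.2]{MR1984479}); the alternative derivation in \cite[Theorem~1]{MR2810308} goes through Delsarte's code--anticode inequality together with the Frankl--Wilson characterization of maximal diametric anticodes in the Grassmann scheme, and yields the same numerical bound after a $q$-binomial identity. Your approach is the more elementary of the two, at the cost of not generalizing to settings where the extremal anticode is not a ``pencil'' of subspaces through a fixed $t$-space.
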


\section{Bounds for $\Lambda$}\label{sec:considerLambda}

Here, we adapt the proof of the upper bound of the size of an MRD code to obtain an upper bound on $\Lambda(q,a,b,d,u)$.

This particular upper bound is the Singleton bound applied to the metric space $(\F_q^{a \times b},\distr)$ via the puncturing operation $g : \F_q^{a \times b} \to \F_q^{a \times (b-1)}$ mapping a matrix to its first $b-1$ columns, i.e., it cuts the last column off.

\begin{theorem}\label{theo:upperboundLambda}
Let $2 \le d \le \min\{a,b\}$ and $0 \le u$ be integers.
Then, we have $\Lambda(q,a,b,d,u) \le \Lambda(q,a,b-1,d-1,u)$ and $\Lambda(q,a,b,d,u) \le$
\begin{align*}
\sum_{r=0}^{\min\{u,\min\{a,b\}-d+1\}}
\!\!\!\!\!
q^{\binom{r}{2}}(q\!-\!1)^r[r]_q!\!\gaussmnum{\min\{a,b\}-d+1}{r}{q}\!\!\gaussmnum{\max\{a,b\}}{r}{q}
\!.
\end{align*}
\end{theorem}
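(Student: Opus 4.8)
The plan is to prove the first inequality by analysing the puncturing map $g$ and then to iterate it. Fix an $(a \times b, N, d; u)_q$ RRMC $\mathcal{R}$ of maximal size $N = \Lambda(q,a,b,d,u)$. For any codeword $C$, writing $C = g(C) \mid c$ with $c$ its last column, inequality (\ref{math:rkineq}) gives $\rk(g(C)) \le \rk(C) \le \rk(g(C)) + 1$, so puncturing never increases the rank and decreases it by at most one. Applied to the difference $A-B$ of two distinct codewords, this yields $\rk(g(A)-g(B)) = \rk(g(A-B)) \ge \rk(A-B) - 1 \ge d-1 \ge 1$, where the last step uses $d \ge 2$. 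Hence $g$ is injective on $\mathcal{R}$, its image is an $(a \times (b-1), N, d-1; u)_q$ RRMC, and therefore $N \le \Lambda(q,a,b-1,d-1,u)$, which is the first claim.

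For the explicit bound I would first record that the rank distance is invariant under transposition, so $\Lambda(q,a,b,d,u) = \Lambda(q,b,a,d,u)$; this lets me assume without loss of generality that the punctured coordinate is the smaller of the two, i.e.\ that we reduce $\min\{a,b\}$. Iterating the first inequality $d-1$ times then gives
\begin{align*}
\Lambda(q,a,b,d,u) \le \Lambda(q,\max\{a,b\},\min\{a,b\}-d+1,1,u),
\end{align*}
where each of the $d-1$ steps is legitimate because the current distance runs through $d, d-1, \dots, 2$, all at least $2$, and the surviving column count stays $\ge \min\{a,b\}-d+1 \ge 1$ since $d \le \min\{a,b\}$.

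It remains to evaluate the right-hand side. With minimum rank-distance $1$, the constraint $\rk(A-B) \ge 1$ holds automatically for any two distinct matrices, so an optimal $(\max\{a,b\} \times (\min\{a,b\}-d+1), \cdot, 1; u)_q$ RRMC is simply the set of all matrices in $\F_q^{\max\{a,b\} \times (\min\{a,b\}-d+1)}$ of rank at most $u$. Summing the rank-$r$ counts from Theorem~\ref{theo:NumberMatricesRankRFiniteField} over $0 \le r \le \min\{u,\min\{a,b\}-d+1\}$ (the upper limit being $\min\{u,\min\{\max\{a,b\},\min\{a,b\}-d+1\}\} = \min\{u,\min\{a,b\}-d+1\}$, as $\min\{a,b\}-d+1 \le \max\{a,b\}$) produces exactly the stated formula. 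The only point requiring care is the bookkeeping of which dimension is punctured: one must reduce $\min\{a,b\}$ rather than $b$ itself so that the residual factors $\gaussmnum{\max\{a,b\}}{r}{q}$ and $\gaussmnum{\min\{a,b\}-d+1}{r}{q}$ emerge, and the transpose symmetry is what makes this reduction available.
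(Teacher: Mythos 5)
Your proposal is correct and follows essentially the same route as the paper: puncture the last column, observe that the rank distance drops by at most one (so the map is injective for $d\ge 2$), iterate $d-1$ times down to minimum distance $1$, and count all matrices of rank at most $u$ via Theorem~\ref{theo:NumberMatricesRankRFiniteField}. The only cosmetic difference is that the paper invokes the transpose reduction $b\le a$ at the outset, whereas you prove the first inequality for general $a,b$ and only then use transposition invariance to ensure the smaller dimension is the one reduced.
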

\begin{proof}
Let $b \le a$ without loss of generality, otherwise transpose.
Note that, for matrices $A$ and $B$ of compatible size and ambient field, $\rk(A)-\rk(g(A)) \in \{0,1\}$ (cf. Inequality~\eqref{math:rkineq}), which implies $\distr(A,B)-\distr(g(A),g(B)) \in \{0,1\}$, hence $\distr(A,B)-1 \le \distr(g(A),g(B))$ and $\rk(g(A)) \le \rk(A)$.
So the puncturing operation applied to all elements of an $(a \times b,N,d;u)_q$ RMC yields an $(a \times (b-1),N',d-1;u)_q$ RMC and this is an injective map if $2 \le d$, i.e., $N'=N$.
Applying the puncturing operation $d-1$ times yields an $(a \times (b-d+1),N,1;u)_q$ RMC $\mathcal{R}$.
We have
\begin{align*}
\mathcal{R} \subseteq \left\{ A \in \F_q^{a \times (b-d+1)} \mid \rk A \le u \right\}
\end{align*}
and consequently
\begin{align*}
\Lambda(q,a,b,d,u) \le \#\left\{ A \in \F_q^{a \times (b-d+1)} \mid \rk A \le u \right\}
.
\end{align*}
Then, Theorem~\ref{theo:NumberMatricesRankRFiniteField} allows to determine the cardinality of the right hand side and to complete the proof.
\end{proof}

In addition to the recursion provided in Theorem~\ref{theo:upperboundLambda}, i.e., $\Lambda(q,a,b+1,d+1,u) \le \Lambda(q,a,b,d,u)$, a similar argument shows $\Lambda(q,a+1,b,d+1,u) \le \Lambda(q,a,b,d,u)$ and trivially, we also have $\Lambda(q,a,b,d,u-1) \le \Lambda(q,a,b,d,u)$.

The bound in Theorem~\ref{theo:upperboundLambda} is equivalent to $\Lambda(q,a,b,d,u) \le M(q,a,b,d)$, cf. Inequality~\eqref{math:ineqDeltaLambdaM}, iff $\min\{a,b\} < d+u$ and stronger iff $d+u \le \min\{a,b\}$.

$\Lambda(q,a,b,d,u)$ is precisely the clique number of a graph with vertex set $\left\{A \in \F_q^{a \times b} \mid \rk A \le u \right\}$ and two vertices $A$ and $B$ share an edge iff $\distr(A,B) \ge d$.
The number of vertices can be computed by Theorem~\ref{theo:NumberMatricesRankRFiniteField}.

Using \texttt{GAP}~\cite{GAP4} and \texttt{Cliquer}~\cite{niskanen2003cliquer} we compute
\begin{align*}
&\Lambda(2,2,2,2,1)=3, &&\Lambda(2,3,2,2,1)=3, \\
&\Lambda(2,3,3,2,1)=7, &&\Lambda(2,3,3,2,2)=50, \\
&\Lambda(2,4,4,2,1)=15, &&\Lambda(2,4,4,4,2)=5, \\
&\Lambda(3,2,2,2,1)=4, &&\Lambda(3,3,3,2,1)=13, \text{ and}\\
&\Lambda(3,4,2,2,1)=4.
\end{align*}

The bounds of Inequality~\eqref{math:ineqDeltaLambdaM} are
\begin{align*}
1 \le &\Lambda(q,3,3,2,1) \le q^6, \\
q(q^4+q^3+q^2-q-1) \le &\Lambda(q,3,3,2,2) \le q^6, \text{ and} \\
q(q^7\!+\!q^6\!+\!2q^5\!+\!q^4\!-\!q^2\!-\!2q\!-\!1) \le &\Lambda(q,4,4,2,2) \le q^{12}.
\end{align*}

Theorem~\ref{theo:upperboundLambda} implies
\begin{align*}
&\Lambda(q,3,3,2,1) \le q(q^3+q^2-1), \\
&\Lambda(q,3,3,2,2) \le q^6, \text{ and} \\
&\Lambda(q,4,4,2,2) \le q^3(q^7+q^6+q^5-q^4-q^3-q^2+1).
\end{align*}

We can prove basic structure results for RRMCs.

\begin{lemma}\label{lem:structureRRMC}
Let $\mathcal{C}$ be an $(a \times b,N,d;u)_q$ RMC with $2 \le N$, then $d-u \le \rk(A) \le u$ for each matrix $A$ in $\mathcal{C}$, there is at most one matrix $M$ in $\mathcal{C}$ with $\rk(M) < d/2$, and $d \le \distr(X,Y) \le 2u$ for all $X \ne Y \in \mathcal{C}$.
\end{lemma}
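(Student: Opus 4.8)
The plan is to derive all three claims from the subadditivity of the rank function, Inequality~\eqref{math:subaddmat}, combined with the defining properties of an RRMC and the hypothesis $N \ge 2$. The single recurring tool is that $\distr(X,Y)=\rk(X-Y)$ and $\rk(-B)=\rk(B)$, so that $\rk(X-Y) \le \rk(X)+\rk(Y)$ for any two codewords $X,Y$.

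For the rank bounds in the first claim, the upper estimate $\rk(A) \le u$ is simply the definition of an $(a \times b,N,d;u)_q$ RRMC. For the lower estimate I would use that $N \ge 2$ guarantees a second codeword $B \ne A$; then the minimum-distance requirement gives $d \le \distr(A,B)=\rk(A-B)$, while subadditivity yields $\rk(A-B) \le \rk(A)+\rk(B) \le \rk(A)+u$. Chaining the two inequalities produces $d-u \le \rk(A)$.

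For the second claim I would argue by contradiction: if two distinct codewords $M_1,M_2$ both had rank strictly below $d/2$, then subadditivity would force $\distr(M_1,M_2)=\rk(M_1-M_2) \le \rk(M_1)+\rk(M_2) < d$, contradicting the minimum distance. Hence at most one codeword can have rank below $d/2$.

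The third claim is then immediate: the lower bound $d \le \distr(X,Y)$ is exactly the definition of the minimum rank-distance, and the upper bound follows once more from subadditivity together with the rank restriction, since $\distr(X,Y)=\rk(X-Y) \le \rk(X)+\rk(Y) \le 2u$. I do not expect a genuine obstacle in this lemma; the only point requiring care is the role of the hypothesis $N \ge 2$, which is precisely what makes the lower bound $d-u \le \rk(A)$ meaningful---without a second codeword to compare against, the rank of an isolated matrix (for instance the zero matrix) need not be bounded below at all.
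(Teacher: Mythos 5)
Your proposal is correct and follows essentially the same route as the paper: all three claims are derived from the subadditivity of rank (Inequality~\eqref{math:subaddmat}) applied to $\rk(X-Y)\le\rk(X)+\rk(Y)$, with the existence of a second codeword (from $N\ge 2$) supplying the comparison needed for the lower bound $d-u\le\rk(A)$ and the contradiction in the second claim. Your explicit remark on why $N\ge 2$ is needed matches what the paper leaves implicit in its quantification over $A\ne B\in\mathcal{C}$.
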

\begin{proof}
By Inequality~\eqref{math:subaddmat}, we have $d \le \rk(A-B) \le \rk(A)+\rk(-B) \le \rk(A)+u \Rightarrow d-u \le \rk(A)$ for $A \ne B \in \mathcal{C}$.

Assume there are two distinct matrices $M$ and $M'$ in $\mathcal{C}$ with $\rk(M) < d/2$ and $\rk(M') < d/2$, then again by Inequality~\eqref{math:subaddmat}, we have $d \le \rk(M-M') \le \rk(M)+\rk(-M') <d/2+d/2$, a contradiction.

Inequality~\eqref{math:subaddmat} shows $\rk(X-Y) \le \rk(X)+\rk(-Y) \le 2u$.
\end{proof}

The next lemma is needed in Theorem~\ref{theo:lambdaequalities} to complete the case of ``$d=2u$''.

\begin{lemma}\label{lem:rkouterproductgeneral}
Let $A_1,B_1 \in \F_q^{a \times u}$ and $A_2,B_2 \in \F_q^{u \times b}$.
Then $\rk(A_1A_2-B_1B_2) = 2u$ iff $\rk\sm{A_1 & B_1}=\rk\sm{A_2 \\ B_2}=2u$.
\end{lemma}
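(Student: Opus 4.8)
The plan is to express the difference $A_1A_2-B_1B_2$ as a single matrix product and then read off both directions directly from the rank inequalities in~\eqref{math:sylvesterandtrivmat}. The key observation is the factorization
\begin{align*}
A_1A_2-B_1B_2 = \sm{A_1 & B_1}\cdot\sm{A_2 \\ -B_2},
\end{align*}
so I would set $X := \sm{A_1 & B_1} \in \F_q^{a \times 2u}$ and $Y := \sm{A_2 \\ -B_2} \in \F_q^{2u \times b}$. First I would note that negating the bottom block of $Y$ leaves its rank unchanged, so $\rk(Y) = \rk\sm{A_2 \\ B_2}$, while $\rk(X) = \rk\sm{A_1 & B_1}$ by definition. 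Since $X$ has $2u$ columns we have $\rk(X) \le 2u$, and since $Y$ has $2u$ rows we have $\rk(Y) \le 2u$.

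For the direction ``$\Rightarrow$'' I would use only the trivial upper bound from~\eqref{math:sylvesterandtrivmat}: if $\rk(XY) = 2u$, then $2u = \rk(XY) \le \min\{\rk(X),\rk(Y)\}$, which forces $\rk(X) \ge 2u$ and $\rk(Y) \ge 2u$; combined with the two upper bounds above this yields $\rk(X) = \rk(Y) = 2u$, i.e.\ exactly $\rk\sm{A_1 & B_1} = \rk\sm{A_2 \\ B_2} = 2u$.

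For the direction ``$\Leftarrow$'' I would invoke the Sylvester-type lower bound from~\eqref{math:sylvesterandtrivmat} with $n = 2u$, the number of columns of $X$: assuming $\rk(X) = \rk(Y) = 2u$ gives $\rk(XY) \ge \rk(X) + \rk(Y) - 2u = 2u$, while the same inequality's upper half gives $\rk(XY) \le \min\{\rk(X),\rk(Y)\} = 2u$, hence $\rk(A_1A_2-B_1B_2) = \rk(XY) = 2u$.

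There is essentially no hard step once the factorization is written down; the only point requiring care is applying~\eqref{math:sylvesterandtrivmat} with the correct inner dimension $n = 2u$, since this is precisely the value that makes the lower bound match the upper bound here. I therefore expect the bookkeeping around $n = 2u$ and the two a~priori rank bounds $\rk(X),\rk(Y) \le 2u$ to be the main place to be careful, rather than a genuine obstacle.
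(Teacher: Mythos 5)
Your proof is correct and follows essentially the same route as the paper: the same factorization $A_1A_2-B_1B_2 = \sm{A_1 & B_1}\sm{A_2 \\ -B_2}$ combined with the Sylvester lower bound and the $\min$ upper bound from~\eqref{math:sylvesterandtrivmat} with inner dimension $2u$. The only cosmetic difference is that you phrase the ``only if'' direction directly via $2u \le \min\{\rk X,\rk Y\}$ rather than as the contrapositive, and you obtain $\rk(XY)\le 2u$ from the $\min$ bound instead of from subadditivity~\eqref{math:subaddmat}; both are equivalent.
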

\begin{proof}
We have $A_1A_2-B_1B_2 = \sm{A_1 & B_1}\sm{A_2 \\ -B_2} = M$.
Inequality~\eqref{math:subaddmat} shows $\rk(M) \le \rk(A_1A_2)+\rk(B_1B_2) \le u+u$ and Inequality~\eqref{math:sylvesterandtrivmat} implies
\begin{align*}
&\rk\sm{A_1 & B_1}+\rk\sm{A_2 \\ -B_2}-2u
\\\le& \rk(M) \le \min\left\{\rk\sm{A_1 & B_1},\rk\sm{A_2 \\ -B_2}\right\}.
\end{align*}
If $\rk\sm{A_1 & B_1}=\rk\sm{A_2 \\ B_2}=2u$, then this shows $2u \le \rk(M)$.
If $\rk\sm{A_1 & B_1}<2u$ or $\rk\sm{A_2 \\ B_2}<2u$, then this shows $\rk(M) < 2u$.
\end{proof}

We can settle the size of $\Lambda(q,a,b,d,u)$ for many parameters, in particular for all parameters with $u=1$ or all parameters with $d=2u$.

\begin{theorem}\label{theo:lambdaequalities}
\begin{enumerate}
\item If $2u < d$ or if $\min\{a,b\} < d$, then $\Lambda(q,a,b,d,u) = 1$.
\item If $\min\{a,b\} \le u$, then $\Lambda(q,a,b,d,u) = M(q,a,b,d)$.
\item $\Lambda(q,a,b,1,u) = \sum_{r=0}^{\min\{u,a,b\}} q^{\binom{r}{2}}(q-1)^r[r]_q!\gaussmnum{a}{r}{q}\gaussmnum{b}{r}{q}$.
\item $\Lambda(q,a,b,2u,u) = A_q(\min\{a,b\},2u;u)$.
\end{enumerate}
\end{theorem}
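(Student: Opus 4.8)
The plan is to handle the four parts largely independently, since each rests on a different observation.

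For part (1), the two hypotheses each force $\mathcal{C}$ to be trivial. If $2u<d$, then any two distinct codewords $A\neq B$ satisfy $d\le\distr(A,B)=\rk(A-B)\le\rk(A)+\rk(B)\le 2u<d$ by Inequality~\eqref{math:subaddmat}, a contradiction; this is essentially the last assertion of Lemma~\ref{lem:structureRRMC} read in the contrapositive, so $N\le 1$, and since the single matrix $0$ is admissible, $\Lambda=1$. If instead $\min\{a,b\}<d$, then no matrix in $\F_q^{a\times b}$ can have rank $d$ or more, so again two distinct codewords cannot be at distance $d$, giving $\Lambda=1$.

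For part (2), when $\min\{a,b\}\le u$ the rank restriction is vacuous: every matrix in $\F_q^{a\times b}$ automatically has rank at most $\min\{a,b\}\le u$, so an $(a\times b,N,d;u)_q$ RRMC is just an ordinary $(a\times b,N,d)_q$ RMC. Hence $\Lambda(q,a,b,d,u)=M(q,a,b,d)$ by the Delsarte--Gabidulin result, which is stated to hold precisely when $\min\{a,b\}\le u$. For part (3), take $d=1$: the distance constraint $\distr(X,Y)\ge 1$ merely says the codewords are distinct, so the largest such code with all ranks at most $u$ is the \emph{entire} set $\{A\in\F_q^{a\times b}\mid\rk A\le u\}$. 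Summing Theorem~\ref{theo:NumberMatricesRankRFiniteField} over $r=0,\dots,\min\{u,a,b\}$ yields exactly the stated formula. (This is the same counting already invoked at the end of the proof of Theorem~\ref{theo:upperboundLambda}.)

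The substantive case is part (4), the equality $\Lambda(q,a,b,2u,u)=A_q(\min\{a,b\},2u;u)$; I expect the construction of the bijection here to be the main obstacle. Assume $b\le a$ without loss of generality. By Lemma~\ref{lem:structureRRMC} with $d=2u$, every codeword $A$ of an optimal RRMC satisfies $d-u\le\rk(A)\le u$, i.e.\ $\rk(A)=u$ exactly, with at most one exceptional matrix of smaller rank, which (by the same lemma, since $2u\le 2\rk A$ forces $\rk A=u$) must be the zero matrix; and all pairwise distances satisfy $2u\le\distr(X,Y)\le 2u$, so they are \emph{exactly} $2u$. The plan is to map each rank-$u$ matrix $A\in\F_q^{a\times b}$ to the $u$-subspace spanned by its columns (equivalently, row space of $A^{\mathsf T}$) inside $\F_q^{\min\{a,b\}}$, and to show this induces a distance-preserving correspondence between RRMCs with $d=2u$ and CDCs with parameters $A_q(\min\{a,b\},2u;u)$. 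The key computation is that for two rank-$u$ matrices, the condition $\rk(A-B)=2u$ is equivalent, via Lemma~\ref{lem:rkouterproductgeneral} applied to a rank-$u$ factorization $A=A_1A_2$, $B=B_1B_2$ with $A_1,B_1\in\F_q^{a\times u}$ and $A_2,B_2\in\F_q^{u\times b}$, to the column spaces (and row spaces) meeting only trivially; maximal subspace-distance $2u$ between two $u$-subspaces is exactly the condition that they intersect trivially by the rank formula~\eqref{math:dsrank}. The hard part will be verifying that the correspondence is a genuine bijection up to the quantities being counted --- in particular that the freedom in the factorization (right-multiplication of $A_1$ and left-multiplication of $A_2$ by inverse elements of $\mathrm{GL}_u$) does not change the associated pair of subspaces, and conversely that every $u$-subspace arises, so that the extremal cardinalities on the two sides agree and Lemma~\ref{lem:rkouterproductgeneral} delivers the ``only if'' direction needed to rule out collisions.
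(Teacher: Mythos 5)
Parts (1)--(3) of your proposal are correct and follow the same route as the paper: (1) via the subadditivity bound (Lemma~\ref{lem:structureRRMC}) together with $\rk(A-B)\le\min\{a,b\}$, (2) because the rank restriction is vacuous when $\min\{a,b\}\le u$, and (3) by taking all matrices of rank at most $u$ and counting them with Theorem~\ref{theo:NumberMatricesRankRFiniteField}.

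Part (4) follows the paper's strategy (constant rank $u$, constant distance $2u$, rank-$u$ factorizations, Lemma~\ref{lem:rkouterproductgeneral}), but you stop exactly where the content lies, and the framing as a single ``distance-preserving correspondence between RRMCs and CDCs'' is not the right picture. A codeword $A=A_1A_2$ with $A_1\in\F_q^{a\times u}$, $A_2\in\F_q^{u\times b}$ determines a \emph{pair} of subspaces, since Lemma~\ref{lem:rkouterproductgeneral} requires \emph{both} $\rk(A_1\mid B_1)=2u$ and $\rk\sm{A_2\\B_2}=2u$: the column spaces live in $\F_q^a=\F_q^{\max\{a,b\}}$ (not in $\F_q^{\min\{a,b\}}$ as you write) and the row spaces in $\F_q^b$. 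The upper bound $\Lambda(q,a,b,2u,u)\le A_q(b,2u;u)$ for $b\le a$ needs no well-definedness or surjectivity discussion at all: the row spaces of distinct codewords intersect trivially, hence are distinct and form a $(b,N,2u;u)_q$ CDC. The genuinely missing step is the lower bound, which is not obtained by ``inverting'' your correspondence (it is many-to-one). The paper constructs the extremal code explicitly: choose a $(b,N,2u;u)_q$ CDC $\tau^{-1}(Y)$ with $N=A_q(b,2u;u)$, a second CDC $\tau^{-1}(X^T)$ in $\F_q^a$ of the \emph{same} cardinality $N$ --- which exists because $A_q(b,2u;u)\le A_q(a,2u;u)$ --- and an arbitrary bijection $f\colon X\to Y$; the code $\{xf(x)\mid x\in X\}$ has distance exactly $2u$ by Lemma~\ref{lem:rkouterproductgeneral}. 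This pairing of two CDCs of equal size, and the monotonicity in the ambient dimension that makes it possible, is what your sketch lacks. Two smaller points: your remark about ``at most one exceptional matrix of smaller rank \ldots which must be the zero matrix'' is vacuous here, since $d-u=u$ already forces every codeword of a code of size at least two to have rank exactly $u$ (the zero matrix cannot occur); and the degenerate case $\min\{a,b\}<2u$, which the paper dispatches via part (1) before assuming rank-$u$ factorizations exist, should be handled separately.
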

\begin{proof}
\begin{enumerate}
\item
Lemma~\ref{lem:structureRRMC} implies the first case with the fact that $\mathcal{C}=\{0\}$ is an $(a \times b,N,d;u)_q$ RMC of maximum cardinality.
If $\min\{a,b\} < d$, then $d \le \distr(A,B) = \rk(A-B) \le \min\{a,b\}$ implies also that $\mathcal{C}=\{0\}$ is of maximum cardinality.
\item
If $\min\{a,b\} \le u$, then $u$ imposes no restriction and any $(a \times b,N,d;u)_q$ RMC, and the latter is an $(a \times b,N,d)_q$ RMC.
\item
If $d=1$, then the unique maximum cardinality code consists of all matrices of rank at most $u$, so that Theorem~\ref{theo:NumberMatricesRankRFiniteField} completes the proof.
\item
The statement is true for $\min\{a,b\} < 2u$ by (1), so we assume $2u \le \min\{a,b\}$.
Let $\mathcal{C}$ be an $(a \times b,N,2u;u)_q$ RMC of size at least two.
Lemma~\ref{lem:structureRRMC} implies that any matrix in $\mathcal{C}$ has rank $u$ and $\distr(A,B)=2u$ for $A \ne B \in \mathcal{C}$.

Any matrix of rank $u$ in $\mathcal{C}$ is a product $AB$ for $A \in \F_q^{a \times u}$ and $B \in \F_q^{u \times b}$, both of full rank $u$.
Hence, by Lemma~\ref{lem:rkouterproductgeneral}, we have $\distr(A_1B_1,A_2B_2)=2u$ iff $\rk\sm{A_1 & B_1}=\rk\sm{A_2 \\ B_2}=2u$.

Let wlog. $b \le a$, then choose $Y \subseteq \F_q^{u \times b}$ such that $Y$ consists of full rank matrices such that $\rk\sm{B_1 \\ B_2}=2u$ for $B_1 \ne B_2 \in Y$, i.e., $\tau^{-1}(Y)$ is a $(b,\#Y,2u;u)_q$ CDC and the subspace distance is precisely $2u \le \dists(\tau^{-1}(B_1),\tau^{-1}(B_2)) = 2\left(\rk\sm{B_1\\B_2}-u\right) \Leftrightarrow 2u \le \rk\sm{B_1\\B_2}$ by Equation~\eqref{math:dsrank}.
Hence, the maximum cardinality of $Y$ is $A_q(b,2u;u)$.

Choose $X \subseteq \F_q^{a \times u}$ such that $X$ consists of full rank matrices such that $\rk\sm{A_1 & A_2}=2u$ for $A_1 \ne A_2 \in X$, as before $\tau^{-1}(X^T)$ is an $(a,\#X,2u;u)_q$ CDC, where $X^T=\{A^T \mid A \in X\}$.
We choose $X$ of size $A_q(b,2u;u) \le A_q(a,2u;u)$.

Each bijection $f: X \to Y$ gives then an $(a \times b,N,2u;u)_q$ RMC of maximum cardinality $A_q(b,2u;u)$, i.e., $\{xf(x) \mid x \in X\}$.
\end{enumerate}
\end{proof}

Note that the proof of (4) in Theorem~\ref{theo:lambdaequalities} shows first that we have a constant-rank RMC in the case of $d=2u$, so that e.g. \cite[Theorem~2]{MR2798987} could also complete the proof.

Theorem~\ref{theo:lambdaequalities} shows that Inequality\eqref{math:ineqDeltaLambdaM} and Theorem~\ref{theo:upperboundLambda} can be arbitrarily bad, in fact, we have
\begin{align*}
&\Delta(q,a,b,2,1) = 1 \in \Theta(1), \\
&\Lambda(q,a,b,2,1) = [\min\{a,b\}]_q \in \Theta(q^{\min\{a,b\}-1}), \\
&\Lambda(q,a,b,2,1) \le 1+(q-1)[\min\{a,b\}-1]_q[\max\{a,b\}]_q \\
&\quad\quad\quad\quad\quad\quad\,\,\,\in \Theta(q^{a+b-2}), \text{ and} \\
&M(q,a,b,2) = q^{\max\{a,b\}(\min\{a,b\}-1)} \in \Theta(q^{ab-\max\{a,b\}}),
\end{align*}
for $2 \le \min\{a,b\}$, using the Landau-$\Theta$ and Lemma~\ref{lem:qbinestim}.

\begin{corollary}\label{cor:lowerboundlambda}
For $1 \le d$ and $0 \le i \le 2u-d$, we have
\begin{align*}
\Lambda(q,a,b,d,u)
&\ge \Lambda(q,a+i,b+2u-d-i,2u,u)
\\&= A_q(\min\{a+i,b+2u-d-i\},2u;u)
\end{align*}
which yields the strongest bound for
\begin{align*}
i=\max\{0,\min\{2u-d,\lfloor (b-a-d)/2 \rfloor +u\}\}.
\end{align*}
\end{corollary}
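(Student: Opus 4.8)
The plan is to establish the inequality by exhibiting an explicit RRMC of the claimed size inside $\F_q^{a \times b}$, and then to verify the monotonicity-optimization claim about the choice of $i$. The key idea is to embed a smaller ``$d=2u$'' code (whose size we already know exactly by part (4) of Theorem~\ref{theo:lambdaequalities}) into a larger ambient matrix space by padding with zero rows and zero columns, since padding cannot decrease rank-distances and, critically, cannot increase the maximum rank $u$.

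First I would fix an index $i$ with $0 \le i \le 2u-d$ and consider a $(({a+i}) \times ({b+2u-d-i}),N,2u;u)_q$ RRMC $\mathcal{C}$ of maximum size $N=\Lambda(q,a+i,b+2u-d-i,2u,u)$, which by Theorem~\ref{theo:lambdaequalities}(4) equals $A_q(\min\{a+i,b+2u-d-i\},2u;u)$. Note $(a+i)+(b+2u-d-i)=a+b+2u-d$, so the padded space has exactly $2u-d$ extra rows-plus-columns relative to $\F_q^{a \times b}$; the constraint $0 \le i \le 2u-d$ guarantees that neither dimension of the smaller space exceeds the corresponding dimension of the larger one, i.e. $a \le a+i$ and $b \le b+2u-d-i$. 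Hence each codeword of $\mathcal{C}$ is obtained from some matrix $A \in \F_q^{a \times b}$ by appending $i$ zero rows and $2u-d-i$ zero columns (after a fixed choice of placement). This padding map is injective and rank-preserving, so the preimages form a set of $N$ matrices in $\F_q^{a \times b}$; the rank of each preimage is at most $u$ (since padding does not change rank, the rank bound $u$ transfers), and for distinct preimages $X \ne Y$ we have $\rk(X-Y)$ equal to the rank of the corresponding padded difference, which is at least $2u \ge d$. Thus we obtain an $(a \times b,N,\ge d;u)_q$ RMC, which proves $\Lambda(q,a,b,d,u) \ge N$, establishing the displayed inequality and equality.

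It remains to justify that the stated value of $i$ maximizes the bound. By the equality in the statement, the bound as a function of $i$ is $A_q(\min\{a+i,b+2u-d-i\},2u;u)$, and since $A_q(m,2u;u)$ is nondecreasing in $m$ (a CDC in a smaller space embeds into a larger one), I would maximize the inner quantity $m(i)=\min\{a+i,\,b+2u-d-i\}$ over the integer interval $0 \le i \le 2u-d$. Here $a+i$ is increasing in $i$ and $b+2u-d-i$ is decreasing in $i$, so the unconstrained real optimum balances the two arguments, giving $a+i = b+2u-d-i$, i.e. $i = (b-a-d)/2 + u$. The main technical step is then to clamp this optimizer to the admissible integer range $[0,2u-d]$ and to take the floor, which is exactly what the expression $i=\max\{0,\min\{2u-d,\lfloor (b-a-d)/2 \rfloor +u\}\}$ encodes; one checks that because $m(i)$ is a ``tent''-shaped (unimodal) function of the integer $i$, the constrained maximum over integers is attained either at the clamped floor of the balance point or at an endpoint, and the given formula selects the correct one. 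I expect this clamping-and-rounding verification to be the only delicate part, since it requires confirming that rounding the balance point down rather than up still yields the maximum of the discrete unimodal function; the embedding argument itself is routine once the row/column-padding interpretation is in place.
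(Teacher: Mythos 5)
There is a genuine gap in the main step. You take a maximum $((a+i)\times(b+2u-d-i),N,2u;u)_q$ code $\mathcal{C}$ and then assert that each codeword of $\mathcal{C}$ ``is obtained from some matrix $A\in\F_q^{a\times b}$ by appending $i$ zero rows and $2u-d-i$ zero columns.'' That is false: the codewords of $\mathcal{C}$ are arbitrary matrices of the larger format, and nothing forces them to vanish on any prescribed rows or columns, so the ``preimages'' you want to take need not exist. Your embedding also runs in the wrong direction: zero-padding maps the \emph{smaller} space $\F_q^{a\times b}$ into the \emph{larger} one $\F_q^{(a+i)\times(b+2u-d-i)}$, so it can only prove $\Lambda(q,a+i,b+2u-d-i,2u,u)\ge\Lambda(q,a,b,2u,u)$, not the stated inequality. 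If you insisted on codewords supported on an $a\times b$ block, you would merely obtain a distance-$2u$ code inside $\F_q^{a\times b}$, of size at most $A_q(\min\{a,b\},2u;u)$, which is in general strictly smaller than the claimed $A_q(\min\{a+i,b+2u-d-i\},2u;u)$ (e.g. $a=b=3$, $d=2$, $u=2$, $i=1$ gives $q^2+1$ versus $1$). A telltale sign is that your argument concludes ``distance at least $2u\ge d$'' and never actually uses the relaxation from $2u$ down to $d$, which is the entire content of the corollary.

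The correct mechanism is the opposite of padding, namely the puncturing argument of Theorem~\ref{theo:upperboundLambda}: delete $i$ rows and $2u-d-i$ columns from every codeword of $\mathcal{C}$. Each deletion decreases the rank of a difference by at most one, so the minimum rank-distance drops from $2u$ to at least $2u-(2u-d)=d\ge 1$; this also makes the deletion map injective on $\mathcal{C}$ and it preserves the rank bound $u$, yielding an $(a\times b,N,d;u)_q$ code of the full size $N$. Your second part (balancing $a+i$ against $b+2u-d-i$, clamping to $[0,2u-d]$, and invoking monotonicity of $A_q(\cdot,2u;u)$) matches the paper's reasoning and is fine.
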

\begin{proof}
Let $\mathcal{C}$ be an $((a+i) \times (b+2u-d-i),N,2u;u)_q$ RMC.
We apply the puncturing argument of Theorem~\ref{theo:upperboundLambda} $i$ times to the rows and $2u-d-i$ to the columns of any matrix in $\mathcal{C}$ to obtain an $(a \times b,N,d;u)_q$ RMC $\mathcal{C}'$.
Choosing $\mathcal{C}$ of maximum size shows the first inequality.
Next, (4) in Theorem~\ref{theo:lambdaequalities} shows the equality.
The optimal choice of $i$ follows by $A_q(v,d;k) \le A_q(v+1,d;k)$ and $a+i=b+2u-d-i \Leftrightarrow i=(b-a-d)/2+u$.
\end{proof}

Corollary~\ref{cor:lowerboundlambda} is sometimes stronger than Inequality~\eqref{math:ineqDeltaLambdaM}, e.g. $\Lambda(q,5,5,3,2) \ge 1$ by Inequality~\eqref{math:ineqDeltaLambdaM} but $\Lambda(q,5,5,3,2) \ge A_q(5,4;2) = q^3+1$ by Corollary~\ref{cor:lowerboundlambda}, using $i=0$ and the equality follows from~\cite{MR404010}.

We present another lower bound of $\Lambda(q,a,b,d,u)$ involving CDCs and need therefore the following lemma.

\begin{lemma}\label{lem:generallowerboundofdrbyds}
Let $A_1,B_1 \in \F_q^{a \times u}$ and $A_2,B_2 \in \F_q^{u \times b}$ with $\rk(A_1)=\rk(A_2)=\rk(B_1)=\rk(B_2)=u$.
Then
\begin{align*}
&\dists(\tau^{-1}(A_1^T),\tau^{-1}(B_1^T))+\dists(\tau^{-1}(A_2),\tau^{-1}(B_2))
\\\le&2\distr(A_1A_2,B_1B_2).
\end{align*}
\end{lemma}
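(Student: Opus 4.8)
The plan is to convert both subspace distances on the left-hand side into matrix ranks using Equation~\eqref{math:dsrank}, rewrite the right-hand side through the factorization already used in Lemma~\ref{lem:rkouterproductgeneral}, and then recognize the resulting inequality as Sylvester's rank inequality, i.e.\ the left-hand bound of Inequality~\eqref{math:sylvesterandtrivmat}. The whole argument is essentially this one recognition, so I expect no serious obstacle beyond a transpose bookkeeping step.

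First I would handle the two distance terms. Since $A_1^T,B_1^T \in \F_q^{u \times a}$ have full row-rank $u$, the subspaces $\tau^{-1}(A_1^T)$ and $\tau^{-1}(B_1^T)$ are $u$-dimensional, and their canonical RREF bases $\tau(\tau^{-1}(A_1^T))$ and $\tau(\tau^{-1}(B_1^T))$ have the same row spans as $A_1^T$ and $B_1^T$. Hence the stacked matrix in Equation~\eqref{math:dsrank} has the same rank as $\sm{A_1^T \\ B_1^T}$, which is the transpose of $\sm{A_1 & B_1}$, so that
\[
\dists(\tau^{-1}(A_1^T),\tau^{-1}(B_1^T)) = 2\rk\sm{A_1 & B_1} - 2u .
\]
The identical computation applied directly to $A_2,B_2 \in \F_q^{u \times b}$ (no transpose needed this time) yields
\[
\dists(\tau^{-1}(A_2),\tau^{-1}(B_2)) = 2\rk\sm{A_2 \\ B_2} - 2u .
\]

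Next I would rewrite the right-hand side via the factorization from Lemma~\ref{lem:rkouterproductgeneral}, namely $A_1A_2 - B_1B_2 = \sm{A_1 & B_1}\sm{A_2 \\ -B_2}$, so that $\distr(A_1A_2,B_1B_2) = \rk\left(\sm{A_1 & B_1}\sm{A_2 \\ -B_2}\right)$; note that scaling the lower block of rows by $-1$ is an invertible row operation, whence $\rk\sm{A_2 \\ B_2} = \rk\sm{A_2 \\ -B_2}$. Substituting the two displayed distance identities and dividing by $2$, the asserted inequality is equivalent to
\[
\rk\sm{A_1 & B_1} + \rk\sm{A_2 \\ -B_2} - 2u \le \rk\left(\sm{A_1 & B_1}\sm{A_2 \\ -B_2}\right) .
\]
This is exactly Sylvester's rank inequality from Inequality~\eqref{math:sylvesterandtrivmat}, applied with $X=\sm{A_1 & B_1} \in \F_q^{a \times 2u}$ having $n=2u$ columns and $Y=\sm{A_2 \\ -B_2} \in \F_q^{2u \times b}$, which finishes the proof. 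The only points demanding care are the transpose identity that funnels the first distance into the block matrix $\sm{A_1 & B_1}$, and matching the subtracted column count $n=2u$ in Sylvester's inequality to the $-4u$ arising from the two $-2u$ terms; once these line up, the conclusion is immediate.
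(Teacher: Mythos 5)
Your proof is correct and follows essentially the same route as the paper's: both convert the two subspace distances into $\rk\sm{A_1 & B_1}$ and $\rk\sm{A_2 \\ -B_2}$ via Equation~\eqref{math:dsrank}, factor $A_1A_2-B_1B_2=\sm{A_1 & B_1}\sm{A_2 \\ -B_2}$, and conclude with Sylvester's rank inequality from Inequality~\eqref{math:sylvesterandtrivmat} with $n=2u$. No gaps.
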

\begin{proof}
Using Equation~\eqref{math:dsrank}, we have
\begin{align*}
&d_1
=\dists(\tau^{-1}(A_1^T),\tau^{-1}(B_1^T))
=2\left(\rk\sm{A_1^T\\B_1^T}-u\right)
\\=&2\left(\rk\sm{A_1 & B_1}-u\right)
\Leftrightarrow \rk\sm{A_1 & B_1}=d_1/2+u
\end{align*}
and
\begin{align*}
&d_2
=\dists(\tau^{-1}(A_2),\tau^{-1}(B_2))
=2\left(\rk\sm{A_2\\B_2}-u\right)
\\=&2\left(\rk\sm{A_2\\-B_2}-u\right)
\Leftrightarrow \rk\sm{A_2\\-B_2}=d_2/2+u.
\end{align*}

Then Inequality~\eqref{math:sylvesterandtrivmat} and
\begin{align*}
&2\distr(A_1A_2,B_1B_2)
=2\rk\left(\sm{A_1 & B_1}\sm{A_2 \\ -B_2}\right)
\\\ge&2\left(\rk\left(\sm{A_1 & B_1}\right)+\rk\left(\sm{A_2 \\ -B_2}\right)-2u\right)
\\=&2\left(d_1/2+u+d_2/2+u-2u\right)
=d_1+d_2
\end{align*}
complete the proof.
\end{proof}

\begin{theorem}\label{theo:LambdaLB}
Let $2d \le d_1+d_2$, then $\Lambda(q,a,b,d,u) \ge \min\{A_q(a,d_1;u),A_q(b,d_2;u)\}$.
\end{theorem}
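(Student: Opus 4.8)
The plan is to realize $\Lambda(q,a,b,d,u)$ from below by the same multiplicative construction used in part~(4) of Theorem~\ref{theo:lambdaequalities}, but now feeding it two \emph{different} CDCs and invoking Lemma~\ref{lem:generallowerboundofdrbyds} in place of Lemma~\ref{lem:rkouterproductgeneral}. Concretely, set $N=\min\{A_q(a,d_1;u),A_q(b,d_2;u)\}$ and fix a CDC $\mathcal{U}\subseteq\gaussmset{\F_q^a}{u}$ with $\dists\ge d_1$ and $|\mathcal{U}|=A_q(a,d_1;u)$, and a CDC $\mathcal{W}\subseteq\gaussmset{\F_q^b}{u}$ with $\dists\ge d_2$ and $|\mathcal{W}|=A_q(b,d_2;u)$. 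I would pass to subsets of size $N$ of each and fix a bijection $f$ between them. For a codeword $U$ paired with $W=f(U)$, form the matrix $C_U=\tau(U)^T\,\tau(W)\in\F_q^{a\times b}$, a product of a full-column-rank matrix in $\F_q^{a\times u}$ with a full-row-rank matrix in $\F_q^{u\times b}$; hence $\rk(C_U)=u$, so the proposed code $\mathcal{C}=\{C_U\}$ automatically meets the rank restriction of an RRMC.

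The rank-distance bound is where Lemma~\ref{lem:generallowerboundofdrbyds} does the work. For distinct codewords $U_1\ne U_2$ (so that $W_i=f(U_i)$ are distinct as well), I apply the lemma with $A_1=\tau(U_1)^T$, $B_1=\tau(U_2)^T$, $A_2=\tau(W_1)$, $B_2=\tau(W_2)$, all of rank $u$; here $\tau^{-1}(A_1^T)=U_1$, $\tau^{-1}(B_1^T)=U_2$, $\tau^{-1}(A_2)=W_1$, $\tau^{-1}(B_2)=W_2$, and $A_1A_2=C_{U_1}$, $B_1B_2=C_{U_2}$. The lemma yields $\dists(U_1,U_2)+\dists(W_1,W_2)\le 2\distr(C_{U_1},C_{U_2})$. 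Since $U_1,U_2\in\mathcal{U}$ and $W_1,W_2\in\mathcal{W}$, the left-hand side is at least $d_1+d_2\ge 2d$, whence $\distr(C_{U_1},C_{U_2})\ge d$. In particular the matrices $C_U$ are pairwise distinct, so $|\mathcal{C}|=N$, and $\mathcal{C}$ is an $(a\times b,N,d;u)_q$ RMC, which gives $\Lambda(q,a,b,d,u)\ge N$ as claimed.

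I expect no serious obstacle beyond bookkeeping, since Lemma~\ref{lem:generallowerboundofdrbyds} carries the analytic content. The two points to watch are: (i) matching the transpose conventions so that $\tau^{-1}(A_1^T)$ and $\tau^{-1}(A_2)$ really are the intended codewords of $\mathcal{U}$ and $\mathcal{W}$, which is exactly why the first factor is transposed; and (ii) the passage from $2\distr\ge d_1+d_2\ge 2d$ to $\distr\ge d$, which holds directly without any ceiling subtlety because $\distr$ is an integer. The degenerate cases where no $u$-subspace exists (for instance $u>a$ or $u>b$, forcing $N=0$) are vacuous, as $\Lambda(q,a,b,d,u)\ge 1$ always holds for the all-zero matrix.
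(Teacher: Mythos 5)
Your proposal is correct and follows essentially the same route as the paper: take CDCs of $u$-subspaces in $\F_q^a$ and $\F_q^b$ with distances $d_1$ and $d_2$, pair them off by a bijection, form the rank-$u$ products of the corresponding full-rank generator matrices, and invoke Lemma~\ref{lem:generallowerboundofdrbyds} to get $2\distr \ge d_1+d_2 \ge 2d$. The only cosmetic difference is that you use the canonical bases $\tau(U)^T$ and $\tau(W)$ directly, whereas the paper picks arbitrary full-rank representatives $x$ with $\tau^{-1}(x^T)=W$ and $y$ with $\tau^{-1}(y)=U$; this changes nothing.
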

\begin{proof}
Choose $\mathcal{Y}$ as a $(b,N,d_1;u)_q$ CDC and $\mathcal{X}$ as an $(a,N,d_2;u)_q$ CDC with $2d \le d_1+d_2$.

Let $Y \subseteq \F_q^{u \times b}$ such that $Y$ consists of full rank matrices and for each $U \in \mathcal{Y}$ there is exactly one $y \in Y$ with $\tau^{-1}(y)=U$.

Let $X \subseteq \F_q^{a \times u}$ such that $X$ consists of full rank matrices and for each $W \in \mathcal{X}$ there is exactly one $x \in X$ with $\tau^{-1}(x^T)=W$.

In particular, $\#Y=\#X=N$.

Then, each bijection $f: X \to Y$ gives then an $(a \times b,N,d;u)_q$ RMC of cardinality $N$, i.e., $\mathcal{C}=\{xf(x) \mid x \in X\}$, because for $A_1A_2 \ne B_1B_2 \in \mathcal{C}$ such that $A_1,B_1 \in \mathcal{X}$ and $A_2,B_2 \in \mathcal{Y}$, Lemma~\ref{lem:generallowerboundofdrbyds} implies
\begin{align*}
&2d \le d_1+d_2 \le \dists(\tau^{-1}(A_1^T),\tau^{-1}(B_1^T))
\\+&\dists(\tau^{-1}(A_2),\tau^{-1}(B_2)) \le 2\distr(A_1A_2,B_1B_2)
\end{align*}
and each matrix $xf(x) \in \mathcal{C}$ has exactly the rank $u$.
\end{proof}

Since we are actually constructing only a subset of an RRMC which is a constant-rank RMC in Theorem~\ref{theo:LambdaLB}, \cite[Proposition~3]{MR2798987} could also be applied.

Note, that Theorem~\ref{theo:LambdaLB} implies the lower bound of the equality in~(4) of Theorem~\ref{theo:lambdaequalities}, i.e., if $d=2u=d_1=d_2$, then
\begin{align*}
&\Lambda(q,a,b,2u,u)
\\\ge&
\min\{A_q(a,2u;u),A_q(b,2u;u)\}
=
A_q(\min\{a,b\},2u;u).
\end{align*}

\section{Previous linkage constructions}\label{sec:prevlink}

All constructions in this section but Theorem~\ref{theo:linkageK} can be proved by Lemma~\ref{lem:main} which then implies supersets of parameters as the original proofs.
Note, that we also allow $\{0\}$ as RMC.

The original linkage construction was independently discovered by Gluesing-Luerssen and Troha in~\cite{MR3543532} and by Silberstein and Horlemann-Trautmann in~\cite{MR3367813}.

Special cases were already used by Gluesing-Luerssen, Morrison, and Troha in~\cite[Theorem~5.1]{MR3348437} for cyclic orbit codes and by Etzion and Vardy in~\cite[Theorem~11]{MR2810308} for spreads.

\begin{theorem}[{\cite[Theorem~2.3]{MR3543532} and~\cite[Theorem~37 and Corollary~39]{MR3367813}}]\label{theo:linkageGLTSHT}
Let $d/2,k,r,s$ be integers with $2 \le d/2 \le k$, $k \le r$, and $k \le s$.
Let $\mathcal{A}$ be an $(r,\#\mathcal{A},d;k)_q$ CDC and $\mathcal{B}$ be an $(s,\#\mathcal{B},d;k)_q$ CDC.
Let $\mathcal{M}$ be a $(k \times s,\#\mathcal{M},d/2)_q$ RMC.
Then
\begin{align*}
&\{\tau^{-1}(\tau(A) \mid M) : A \in \mathcal{A}, M \in \mathcal{M}\}
\\\cup&\{\tau^{-1}(0 \mid \tau(B)) : B \in \mathcal{B}\}
\end{align*}
is an $(r+s,\#\mathcal{A} \cdot \#\mathcal{M} + \#\mathcal{B},d;k)_q$ CDC.

In particular,
\begin{align*}
A_q(r+s,d;k)
\ge
A_q(r,d;k)
\cdot
M(q,k,s,d/2)
+
A_q(s,d;k).
\end{align*}
\end{theorem}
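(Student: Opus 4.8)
The plan is to verify three claims: every listed matrix generates a $k$-subspace of $\F_q^{r+s}$, the subspace-distance between any two distinct codewords is at least $d$, and the two families together have the stated cardinality. Since positive distance forces the codewords to be pairwise distinct, the cardinality claim follows for free from the distance bound, and the ``in particular'' statement follows by taking $\mathcal{A}$ and $\mathcal{B}$ of maximum size $A_q(r,d;k)$ and $A_q(s,d;k)$ and $\mathcal{M}$ a maximum rank-distance code of size $M(q,k,s,d/2)$, which is admissible because $d/2 \le k = \min\{k,s\}$.

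First I would observe that $\tau(A) \mid M$ has full row-rank $k$: its left block $\tau(A)$ already has rank $k$, and appending the columns of $M$ cannot decrease the rank by Inequality~\eqref{math:rkineq}; likewise $0 \mid \tau(B)$ has rank $k$ since $\tau(B)$ has rank $k$. Hence all codewords are genuine $k$-subspaces. For the distances I would use Equation~\eqref{math:dsrank} in the form $\dists(U,W) = 2\rk\sm{G_U \\ G_W} - 2k$, valid for \emph{any} full-rank generating matrices $G_U,G_W$ because the stacked row-space is $U+W$; thus $\dists(U,W) \ge d$ is equivalent to $\rk\sm{G_U \\ G_W} \ge k + d/2$.

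I would then split into three cases. For two type-one codewords the stacked matrix is $\sm{\tau(A) & M \\ \tau(A') & M'}$. If $A \ne A'$, its left block $\sm{\tau(A) \\ \tau(A')}$ already has rank $k + \dists(A,A')/2 \ge k+d/2$ since $\mathcal{A}$ has minimum distance $d$, and appending columns keeps the rank at least this large by Inequality~\eqref{math:rkineq}. If $A = A'$ but $M \ne M'$, subtracting the top block-row from the bottom gives $\sm{\tau(A) & M \\ 0 & M'-M}$, whose rank equals $k + \rk(M'-M) \ge k+d/2$ because $\mathcal{M}$ has minimum rank-distance $d/2$. For two type-two codewords the matrix $\sm{0 & \tau(B) \\ 0 & \tau(B')}$ has rank $\rk\sm{\tau(B) \\ \tau(B')} = k + \dists(B,B')/2 \ge k+d/2$. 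For the mixed case the matrix $\sm{\tau(A) & M \\ 0 & \tau(B)}$ has rank $k + \rk(\tau(B)) = 2k \ge k + d/2$, using $d/2 \le k$.

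The hard part will be the two block-triangular rank computations, namely that $\rk\sm{\tau(A) & M \\ 0 & N} = k + \rk(N)$ for $N \in \{M'-M, \tau(B)\}$. This is where the structure of the construction matters: the top $k$ rows are nonzero precisely on the first $r$ coordinates, where $\tau(A)$ has full rank $k$, while the bottom rows vanish there, so no nontrivial combination of bottom rows can lie in the span of the top rows; hence the row-space splits as a direct sum and the ranks add. Once all three cases yield $\rk \ge k + d/2$, every pair of distinct labels produces codewords at subspace-distance at least $d$, in particular distinct, which simultaneously settles the cardinality $\#\mathcal{A} \cdot \#\mathcal{M} + \#\mathcal{B}$ and the minimum-distance requirement.
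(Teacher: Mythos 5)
Your proposal is correct, and for two of the three cases it coincides with how the paper argues: the paper proves this theorem (and its relatives) by reducing to Lemma~\ref{lem:main}, whose cases (1) and (2) are exactly your ``$A \ne A'$, keep the left block'' and ``$A = A'$, row-reduce to $\sm{\tau(A) & M \\ 0 & M'-M}$'' computations. Where you genuinely diverge is the mixed case: the paper's Lemma~\ref{lem:main}(3) handles it via pivot vectors, using Lemma~\ref{lem:dhds} together with \eqref{math:dheq} and \eqref{math:dhlb} to get $\dists \ge 2\,|\rk A - \rk C|$, whereas you compute the rank of the block-triangular matrix $\sm{\tau(A) & M \\ 0 & \tau(B)}$ directly as $k + \rk(\tau(B)) = 2k$, exploiting that the bottom rows vanish on the first $r$ coordinates where $\tau(A)$ already has full rank. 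Your argument is more elementary and matches the original proofs of Gluesing-Luerssen--Troha and Silberstein--Horlemann-Trautmann, but it depends on the left block of the second family being exactly $0$; the paper's pivot-vector bound only needs $|\rk A - \rk C| \ge d/2$, which is what allows the second family to carry a nonzero rank-restricted block $R$ in Theorems~\ref{theo:linkageCHWX_trivialimprovement} and~\ref{theo:generalized_linkage}. So you lose nothing here, but the paper's formulation is the one that generalizes. Your remaining observations (full row-rank of all generating matrices, distance $>0$ forcing distinctness and hence the cardinality, and the admissibility of an MRD code of size $M(q,k,s,d/2)$ because $d/2 \le k = \min\{k,s\}$) are all correct.
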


In~\cite{MR3705116}, Heinlein and Kurz combined Theorem~\ref{theo:linkageGLTSHT} with Lemma~\ref{lem:dhds} to get the following so-called \emph{improved linkage construction}.

\begin{theorem}[{\cite[Theorem~18]{MR3705116}, cf.~\cite[Theorem~136]{ubtepub4049}}]\label{theo:linkageHK}
Let $d/2,k,r,s,t$ be integers with $2 \le d/2 \le k \le (r+s)/2$, $k \le r$, $k \le s+t$, and $0 \le t \le k-d/2$.
Let $\mathcal{A}$ be an $(r,\#\mathcal{A},d;k)_q$ CDC and $\mathcal{B}$ be an $(s+t,\#\mathcal{B},d;k)_q$ CDC.
Let $\mathcal{M}$ be a $(k \times s,\#\mathcal{M},d/2)_q$ RMC.
Then
\begin{align*}
&\{\tau^{-1}(\tau(A) \mid M) : A \in \mathcal{A}, M \in \mathcal{M}\}
\\\cup&\{\tau^{-1}(0 \mid \tau(B)) : B \in \mathcal{B}\}
\end{align*}
is an $(r+s,\#\mathcal{A} \cdot \#\mathcal{M} + \#\mathcal{B},d;k)_q$ CDC.

In particular,
\begin{align*}
&A_q(r+s,d;k)
\\\ge&
A_q(r,d;k)
\cdot
M(q,k,s,d/2)
+
A_q(s+k-d/2,d;k).
\end{align*}
\end{theorem}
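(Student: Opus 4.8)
The plan is to check directly that the displayed family is a CDC of the asserted cardinality by bounding all pairwise subspace distances, and then to instantiate the three building blocks to extract the counting bound. Since every codeword has dimension $k$, Equation~\eqref{math:dsrank} converts the distance requirement into a rank requirement: for codewords $U,W$ one has $\dists(U,W)\ge d$ exactly when $\rk\sm{\tau(U)\\\tau(W)}\ge k+d/2$. The first thing I would record is that, because $\tau(A)$ and $\tau(B)$ are in reduced row echelon form with full row rank $k$, both $\tau(A)\mid M$ and $0\mid\tau(B)$ are already in reduced row echelon form. Consequently $\tau^{-1}$ is applied to them verbatim, uniqueness of the RREF makes the two index maps injective, and the pivot vectors can be read off as $\pivot(A)$ padded by $s$ trailing zeros, respectively by $r-t$ leading zeros followed by $\pivot(B)$.

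I would then split distinct pairs into three kinds. For two first-kind words $\tau(A)\mid M$ and $\tau(A')\mid M'$: if $A\ne A'$ the first $r$ columns already give $\rk\sm{\tau(A)\\\tau(A')}\ge k+d/2$ from $\mathcal A$, and \eqref{math:rkineq} carries this to the whole matrix; if $A=A'$ but $M\ne M'$, subtracting the top row-block from the bottom yields $\sm{\tau(A)&M\\0&M'-M}$, whose rank is $k+\rk(M'-M)\ge k+d/2$ because $\mathcal M$ has minimum rank-distance $d/2$. For two second-kind words the leading zero columns are irrelevant and the requirement collapses to $\rk\sm{\tau(B)\\\tau(B')}\ge k+d/2$, supplied by $\mathcal B$.

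The decisive case, which I expect to be the main obstacle, is a mixed pair $U=\tau^{-1}(\tau(A)\mid M)$ and $W=\tau^{-1}(0\mid\tau(B))$: the natural column splits of the two words are misaligned, so a direct rank computation is awkward. Instead I would bound $\dists(U,W)$ from below by $\disth(\pivot(U),\pivot(W))$ via Lemma~\ref{lem:dhds}. Decomposing the ambient length into blocks of widths $r-t$, $t$, $s$ and writing $w_2$ for the weight of $\pivot(A)$ on its last $t$ coordinates and $w_3$ for the weight of $\pivot(B)$ on its first $t$ coordinates, Equations~\eqref{math:dheq} and \eqref{math:dhlb} give $\disth(\pivot(U),\pivot(W))\ge (k-w_2)+|w_2-w_3|+(k-w_3)=2k-2\min\{w_2,w_3\}$. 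The width-$t$ block forces $\min\{w_2,w_3\}\le t$, and the hypothesis $t\le k-d/2$ turns this into $2k-2t\ge d$; hence $\dists(U,W)\ge d$. This is precisely where the constraint $0\le t\le k-d/2$ is consumed, and the gain over the original linkage is the freedom to let $\mathcal B$ live in the larger space $\F_q^{s+t}$.

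Finally, injectivity inside each family together with positivity of the mixed distances shows that the two families are disjoint, so the cardinality is $\#\mathcal A\cdot\#\mathcal M+\#\mathcal B$. The ``in particular'' bound then follows by taking $\mathcal A$ of size $A_q(r,d;k)$, choosing $\mathcal M$ to be an MRD code of size $M(q,k,s,d/2)$, and setting $t=k-d/2$ so that $\mathcal B$ is a CDC in $\F_q^{s+k-d/2}$ of optimal size $A_q(s+k-d/2,d;k)$.
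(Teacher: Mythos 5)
Your proof is correct and follows essentially the same route as the paper, which establishes this theorem as a corollary of Lemma~\ref{lem:main}: your three cases correspond exactly to parts (1), (2), and (3) of that lemma, with the mixed pair handled by the same pivot-vector Hamming-distance argument (Lemma~\ref{lem:dhds} with \eqref{math:dheq} and \eqref{math:dhlb}). The only cosmetic difference is that you split the pivot vectors into three blocks of widths $r-t$, $t$, $s$ to get $2k-2\min\{w_2,w_3\}\ge 2(k-t)\ge d$, whereas the paper splits at column $r$ and bounds the distance by $2|\rk(\tau(A))-\rk(0\mid[\tau(B)])|\ge 2(k-t)$; these are the same estimate.
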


Theorem~\ref{theo:linkageHK} was again improved by a generalized extension to any subcode of the form $\{\tau^{-1}(\tau(A) \mid M) : A \in \mathcal{A}, M \in \mathcal{M}\}$ by Kurz in~\cite{kurz2019note}.
Using the notation in~\cite{kurz2019note}, for $0 \le w \le v$ let $B_q(v,w,d;k)$ be the maximum cardinality of a $(v,\#\mathcal{B},d;k)_q$ CDC $\mathcal{B}$ such that there is a $w$-subspace $W$ with $\dim(W \cap B) \ge d/2$ for each $B \in \mathcal{B}$.

Unfortunately, the quantity $B_q(v,w,d;k)$ is not known in general as they generalize the numbers $A_q(v,d;k)$ which are not well understood either, but~\cite{kurz2019note} contains a lower bound:

\begin{theorem}[{\cite[Theorem~3.2, Proposition~4.1, and Theorem~4.2]{kurz2019note}}]\label{theo:linkageK}
Let $d/2,k,r,s$ be integers with $2 \le d/2 \le k \le (r+s)/2$, $k \le r$, and $d/2 \le s$.
\begin{enumerate}
\item
Let $\mathcal{A}$ be an $(r,\#\mathcal{A},d;k)_q$ CDC.
Let $\mathcal{M}$ be a $(k \times s,\#\mathcal{M},d/2)_q$ RMC.
Then the $s$-subspace $W=\tau^{-1}(0 \mid I)$ intersects each codeword in $\{\tau^{-1}(\tau(A) \mid M) : A \in \mathcal{A}, M \in \mathcal{M}\}$ trivial.
Let $\mathcal{B}$ be an $(r+s,\#\mathcal{B},d;k)_q$ CDC such that $\dim(W \cap B) \ge d/2$ for each $B \in \mathcal{B}$.
Then
\begin{align*}
\{\tau^{-1}(\tau(A) \mid M) : A \in \mathcal{A}, M \in \mathcal{M}\}
\cup
\mathcal{B}
\end{align*}
is an $(r+s,\#\mathcal{A} \cdot \#\mathcal{M} + \#\mathcal{B},d;k)_q$ CDC.
\item
\begin{align*}
&A_q(r+s,d;k)
\\\ge&
A_q(r,d;k)
\cdot
M(q,k,s,d/2)
+
B_q(r+s,s,d;k)
\end{align*}
\item
For $4 \le k+1 \le w+2 \le v$:
\begin{align*}
B_q(v,w,2k-2;k) \ge A_q(w,2k-4;k-1)
\end{align*}
\item
For $3 \le k \le s+1$:
\begin{align*}
&A_q(r+s,2k-2;k)
\\\ge&
A_q(r,2k\!-\!2;k)
\!\cdot\!
M(q,k,s,k\!-\!1)
\!+\!
A_q(s,2k\!-\!4;k\!-\!1)
\end{align*}
\end{enumerate}
\end{theorem}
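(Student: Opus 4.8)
The plan is to prove (1) directly, read off (2), establish the extension bound (3), and chain (2) and (3) into (4).

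For (1) I would first record the trivial-intersection claim: a vector of $\tau^{-1}(\tau(A)\mid M)$ is a row combination of $\tau(A)\mid M$, and it lies in $W=\tau^{-1}(0\mid I)$ iff its first $r$ coordinates vanish; since $\tau(A)$ has full row rank $k$, only the zero combination qualifies, so $W\cap\tau^{-1}(\tau(A)\mid M)=\{0\}$. Then I would check the three distance cases for the union. For two linkage codewords, \eqref{math:dsrank} gives distance $2\rk\sm{\tau(A)&M\\\tau(A')&M'}-2k$; if $A\neq A'$ then deleting the $M$-columns and \eqref{math:rkineq} give rank $\ge\rk\sm{\tau(A)\\\tau(A')}=k+\dists(A,A')/2\ge k+d/2$, while if $A=A'$ but $M\neq M'$ a block row reduction gives rank $k+\rk(M-M')\ge k+d/2$. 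Two codewords of $\mathcal B$ have distance $\ge d$ by hypothesis. The genuinely new case is a linkage codeword $C$ against $B\in\mathcal B$: here I would use the coordinate projection $\pi$ onto the first $r$ coordinates, whose kernel is exactly $W$. Since $C\cap W=\{0\}$, also $(C\cap B)\cap W=\{0\}$, so $\pi$ is injective on $C\cap B$ and $\dim(C\cap B)=\dim\pi(C\cap B)\le\dim\pi(B)=\dim B-\dim(B\cap W)\le k-d/2$, using $\dim(W\cap B)\ge d/2$. Hence $\dists(C,B)=2k-2\dim(C\cap B)\ge d$, which is the crux of part (1).

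Statement (2) is then immediate: choose $\mathcal A$, $\mathcal M$, $\mathcal B$ of maximum sizes $A_q(r,d;k)$, $M(q,k,s,d/2)$, and $B_q(r+s,s,d;k)$, the last with respect to the specific $s$-subspace $W=\tau^{-1}(0\mid I)$, which is legitimate since any $s$-subspace is $\mathrm{GL}$-equivalent to $W$. The distinctness needed for the claimed cardinality follows because all mutual distances are $\ge d\ge 2>0$.

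For (3), set $d=2k-2$, so $d/2=k-1$. Starting from an optimal $(w,N,2k-4;k-1)_q$ CDC $\mathcal C$ of $(k-1)$-subspaces of a fixed $w$-subspace $W\le\F_q^v$, I would extend each $C\in\mathcal C$ to a $k$-subspace $B_C=C+\langle x_C\rangle$ with $x_C\notin W$, so that automatically $\dim(W\cap B_C)=k-1$. Writing $x_C=u_C+\psi(C)$ with $u_C$ in a complement $U$ (of dimension $v-w\ge 2$) and $\psi(C)\in W$, a rank computation gives $\dim(B_C\cap B_{C'})=2k-\dim\big(C+C'+\langle x_C,x_{C'}\rangle\big)$, and since $\dists(C,C')\ge 2k-4$ forces $\dim(C\cap C')\le 1$, the required distance $2k-2$ (equivalently $\dim(B_C\cap B_{C'})\le 1$) holds precisely when the second extension vector is independent modulo $C+C'+\langle x_C\rangle$. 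This is automatic when $u_C,u_{C'}$ are linearly independent, and when $\dim(C\cap C')=0$ regardless; the only dangerous configuration is a pair meeting in a line that is also assigned parallel directions. I expect this to be the main obstacle: one cannot in general hand all codewords through a common point distinct directions, since their number can exceed $[v-w]_q$, so the argument must exploit both the $\ge 2$ available directions in $\mathbb P(U)$ and the shifts $\psi(C)\in W$ to separate every such pair. The construction therefore hinges on exhibiting an assignment $C\mapsto(u_C,\psi(C))$ that avoids the bad configuration for all conflicting pairs simultaneously, which is exactly where $v\ge w+2$ enters.

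Finally, (4) follows by specializing (2) to $d=2k-2$ and bounding its last term by (3): with $v=r+s$ and $w=s$, part (3) yields $B_q(r+s,s,2k-2;k)\ge A_q(s,2k-4;k-1)$, whose hypotheses $4\le k+1\le s+2\le r+s$ reduce to the stated $3\le k\le s+1$ together with $r\ge k\ge 2$. Combining gives $A_q(r+s,2k-2;k)\ge A_q(r,2k-2;k)\cdot M(q,k,s,k-1)+A_q(s,2k-4;k-1)$.
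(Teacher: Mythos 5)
First, a point of reference: the paper does not prove Theorem~\ref{theo:linkageK} at all --- it is quoted verbatim from Kurz's note \cite{kurz2019note}, and the author explicitly excludes it from the list of constructions reproved via Lemma~\ref{lem:main}. So your attempt can only be judged on its own merits. Parts (1) and (2) you prove correctly and completely: the trivial-intersection claim, the two in-family distance cases, and in particular the cross case via the projection $\pi$ with kernel $W$, giving $\dim(C\cap B)\le\dim\pi(B)=k-\dim(B\cap W)\le k-d/2$, is exactly the right argument, and the reduction of (2) to (1) via transitivity of $\mathrm{GL}$ on $s$-subspaces is fine. The derivation of (4) from (2) and (3), including the parameter bookkeeping, is also correct.

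The genuine gap is part (3), and you essentially flag it yourself. Your setup is right: extending each $(k-1)$-space $C\le W$ to $B_C=C+\langle x_C\rangle$ with $x_C\notin W$, the only problematic pairs are those with $\dim(C\cap C')=1$, for which one needs $x_{C'}\notin C+C'+\langle x_C\rangle$. But the existence of an assignment $C\mapsto x_C$ satisfying this simultaneously for all such pairs \emph{is} the entire content of the proposition, and you stop at ``the construction hinges on exhibiting an assignment \ldots which is exactly where $v\ge w+2$ enters'' without exhibiting one. This is not a routine verification that can be waved through: a single common apex fails outright (concurrent $C,C'$ would give $\dim(B_C\cap B_{C'})\ge 2$), and restricting the $x_C$ to a fixed line external to $W$ reduces to properly colouring the concurrence graph with $q+1$ colours, which is impossible in general --- e.g.\ for $q=2$, $w=4$, $k=3$ one would need to partition all $\gaussmnum{4}{2}{2}=35$ lines of $\mathrm{PG}(3,2)$ into $3$ partial spreads, each of size at most $5$. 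So the shifts $\psi(C)\in W$ are essential, and some nontrivial argument (greedy counting does not obviously close, as you note) is required to show a valid assignment exists; none is supplied. Consequently parts (3) and (4) remain unproved in your write-up, and for them you would have to either reproduce Kurz's actual construction from \cite{kurz2019note} or cite it, as the paper does.
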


Xu and Chen developed in~\cite{MR3849557} a different direction as they incorporate matrices with lower \emph{and upper} bounded ranks in a construction of CDCs.

\begin{theorem}[{\cite[Theorem~3]{MR3849557}}]\label{theo:linkageXC}
Let $d/2,k$ be integers with $2 \le d/2 \le k$.
Let $\mathcal{M}$ be a $(k \times k,\#\mathcal{M},d/2)_q$ RMC and let $\mathcal{R}$ be a $(k \times k,\#\mathcal{R},d/2;k-d/2)_q$ RMC.
Then
\begin{align*}
\{\tau^{-1}(I \mid M) : M \in \mathcal{M}\}
\cup
\{\tau^{-1}(R \mid I) : R \in \mathcal{R}\}
\end{align*}
is a $(2k,\#\mathcal{M} + \#\mathcal{R},d;k)_q$ CDC.

In particular,
\begin{align*}
A_q(2k,d;k)
\ge
M(q,k,k,d/2)
+
\Lambda(q,k,k,d/2,k-d/2)
\end{align*}
\end{theorem}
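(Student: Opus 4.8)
The plan is to verify directly that every pair of distinct codewords has subspace distance at least $d$, and to read off both the cardinality and the distance claim from this. Since the subspace distance of two $k$-subspaces equals $2\rk\sm{G_1\\G_2}-2k$ for any full-rank generator matrices $G_1,G_2$ of them by Equation~\eqref{math:dsrank} (the stacked rank depends only on the two row spans, not on the chosen bases), I would work throughout with the generator matrices $I \mid M$ and $R \mid I$ rather than their reduced row echelon forms.

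I would then split into three cases. For two codewords of the first type with $M \ne M'$, row reduction of $\sm{I & M \\ I & M'}$ to $\sm{I & M \\ 0 & M'-M}$ shows the rank is $k + \rk(M'-M)$, so the subspace distance is $2\rk(M'-M) \ge d$ by the minimum distance $d/2$ of $\mathcal{M}$; symmetrically, for two codewords of the second type, $\sm{R & I \\ R' & I}$ reduces to $\sm{R-R' & 0 \\ R' & I}$ with rank $k + \rk(R-R')$, and the distance is $2\rk(R-R') \ge d$. The essential case is the mixed one: subtracting $R$ times the first block row from the second in $\sm{I & M \\ R & I}$ yields $\sm{I & M \\ 0 & I - RM}$, whose rank is $k + \rk(I-RM)$, so the distance equals $2\rk(I-RM)$.

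The crux of the argument, and the only place where the rank restriction on $\mathcal{R}$ enters, is lower-bounding $\rk(I-RM)$. Here I would invoke subadditivity of rank, Inequality~\eqref{math:subaddmat}: from $I = (I-RM)+RM$ we get $k = \rk(I) \le \rk(I-RM)+\rk(RM)$, and since $\rk(RM) \le \rk(R) \le k - d/2$, this forces $\rk(I-RM) \ge k - (k-d/2) = d/2$. Hence the mixed distance is at least $d$ as well. I expect no genuine obstacle beyond correctly exploiting that every $R \in \mathcal{R}$ has rank at most $k-d/2$, which is precisely what keeps the lower-right block $I-RM$ of sufficiently high rank.

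Finally, since $2 \le d/2$ gives $d \ge 4 > 0$, all the pairwise distances computed above are strictly positive, so the listed subspaces are pairwise distinct and the union has cardinality exactly $\#\mathcal{M}+\#\mathcal{R}$. This shows the set is a $(2k,\#\mathcal{M}+\#\mathcal{R},d;k)_q$ CDC. The displayed inequality then follows by taking $\mathcal{M}$ of maximum size $M(q,k,k,d/2)$ and $\mathcal{R}$ of maximum size $\Lambda(q,k,k,d/2,k-d/2)$.
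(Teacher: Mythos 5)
Your proof is correct, and for the two ``pure'' cases (two codewords from the same half of the union) it coincides with what the paper does via case~(2) of Lemma~\ref{lem:main}: block row reduction showing the stacked rank is $k+\rk(M-M')$ resp.\ $k+\rk(R-R')$. Where you genuinely diverge is the mixed case, which is the heart of the theorem. You eliminate the first block column to get $\rk\sm{I & M \\ R & I}=k+\rk(I-RM)$ and then bound $\rk(I-RM)\ge k-\rk(RM)\ge k-\rk(R)\ge d/2$ using subadditivity of rank, Inequality~\eqref{math:subaddmat}; this is a clean, elementary computation that exploits the identity blocks sitting in complementary positions, and it is closer in spirit to the original Xu--Chen argument. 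The paper instead routes the mixed case through case~(3) of Lemma~\ref{lem:main}: the Hamming distance of pivot vectors lower-bounds the subspace distance (Lemma~\ref{lem:dhds}), and a weight count gives $\dists\ge 2|\rk A-\rk C|=2(k-\rk R)\ge d$. The paper's argument buys generality: it needs no identity blocks at all, only a gap of $d/2$ between the ranks of the left-hand blocks, which is exactly what makes the generalized linkage construction (Theorem~\ref{theo:generalized_linkage}) and its multi-block version go through with arbitrary CDCs $\mathcal{A}$, $\mathcal{B}$ in place of lifted identities. Your computation would not survive that generalization without reworking, but as a proof of Theorem~\ref{theo:linkageXC} itself it is complete and correct, including the disjointness/cardinality bookkeeping.
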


Finally, Theorem~\ref{theo:linkageXC} was improved by Chen, He, Weng, and Xu in~\cite{chen2019new} by allowing the dimensions of the ambient spaces to vary.
This is the so-called \emph{parallel linkage construction}.

\begin{theorem}[{\cite[Theorem~3.1]{chen2019new}}]\label{theo:linkageCHWX}
Let $d/2,k,n$ be integers with $2 \le d/2 \le k$ and $0 \le n$.
Let $\mathcal{A}$ be a $(k+n,\#\mathcal{A},d;k)_q$ CDC such that each $A \in \mathcal{A}$ is of the form $\tau(A)=(I \mid A')$, i.e., it is a lifted RMC, and $\mathcal{B}$ be an $(n+k,\#\mathcal{B},d;k)_q$ CDC.
Let $\mathcal{M}$ be a $(k \times k,\#\mathcal{M},d/2)_q$ RMC and $\mathcal{R}$ be a $(k \times k,\#\mathcal{R},d/2;k-d/2)_q$ RMC.
Then
\begin{align*}
&\{\tau^{-1}(\tau(A) \mid M) : A \in \mathcal{A}, M \in \mathcal{M}\}
\\\cup&\{\tau^{-1}(R \mid \tau(B)) : R \in \mathcal{R}, B \in \mathcal{B}\}
\end{align*}
is an $(n+2k,\#\mathcal{A} \cdot \#\mathcal{M} + \#\mathcal{R} \cdot \#\mathcal{B},d;k)_q$ CDC.

In particular,
\begin{align*}
&A_q(n+2k,d;k)
\\\ge&
M(q,k,n,d/2)
\cdot
M(q,k,k,d/2)
\\+&
A_q(n+k,d;k)
\cdot
\Lambda(q,k,k,d/2,k-d/2).
\end{align*}
\end{theorem}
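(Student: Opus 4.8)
The goal is to prove Theorem~\ref{theo:linkageCHWX}, the parallel linkage construction. I want to show that the two families of subspaces glue together into a single CDC of the claimed subspace distance. The plan is to verify that (i) each constructed object is a genuine $k$-subspace of $\F_q^{n+2k}$, (ii) the total cardinality is as stated because the union is disjoint and each piece is a product of codes with no collisions, and (iii) the pairwise subspace-distance is at least $d$. I would organize the distance check into three cases according to which family the two codewords come from.

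First I would establish the dimensions and cardinalities. Each $\tau(A) \mid M$ has the shape $(I \mid A' \mid M)$, a $k \times (n+2k)$ full-rank matrix since the leading $I$ guarantees rank $k$; hence it is the canonical form of a $k$-subspace. Each $R \mid \tau(B)$ has the form $(R \mid I \mid B')$, again full rank because $\tau(B)=(I \mid B')$ supplies an identity block, so it too represents a $k$-subspace. For the count, I would note that distinct pairs $(A,M)$ give distinct matrices (the $A'$ block and $M$ block occupy disjoint columns), likewise for $(R,B)$, and the two families are disjoint because their pivot vectors differ: the first family has pivots in the first $k$ columns while the second, having $R$ in the first $k$ columns and $I$ in columns $n+1,\dots$, has a different pivot pattern. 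This yields exactly $\#\mathcal{A}\cdot\#\mathcal{M}+\#\mathcal{R}\cdot\#\mathcal{B}$ codewords.

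For the distance I would treat the three cases using Equation~\eqref{math:dsrank}, which expresses $\dists$ as $2\rk\sm{\tau(U)\\\tau(W)}-2k$. Within the first family, two codewords $(I \mid A_1' \mid M_1)$ and $(I \mid A_2' \mid M_2)$ reduce, after subtracting the common identity rows, to a rank bound governed by $\rk\sm{\tau(A_1)\\\tau(A_2)}$ together with $\rk(M_1-M_2)$; the CDC property of $\mathcal{A}$ gives distance $\ge d$ when $A_1\neq A_2$, and when $A_1=A_2$ the MRD property of $\mathcal{M}$ forces $\rk(M_1-M_2)\ge d/2$, i.e.\ distance $\ge d$. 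Symmetrically, within the second family I would use the CDC property of $\mathcal{B}$ and the RRMC property of $\mathcal{R}$, invoking the rank bound~\eqref{math:rkineq} and subadditivity~\eqref{math:subaddmat} as needed. The cross case, comparing $(I \mid A' \mid M)$ with $(R \mid I \mid B')$, is where the upper rank restriction on $\mathcal{R}$ enters: stacking the two matrices and performing row operations, the mismatch between the identity block in the first $k$ columns of the first codeword and the $R$ block of the second, combined with the fact that $\rk(R)\le k-d/2$, should force the stacked rank up by at least $d/2$ beyond $k$ in two independent column regions, yielding total distance $\ge d$.

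I expect the cross case to be the main obstacle, since it is the only place where all four ingredient codes interact and where the rank-restriction $u=k-d/2$ on $\mathcal{R}$ is essential rather than incidental. The delicate point is bounding $\rk\sm{I & A' & M\\ R & I & B'}$ from below: I would argue that the columns carrying the two identity blocks contribute a base rank, and that the deficiency introduced by $R$ (at most $k-d/2$) is compensated because $R$ occupies the same columns as an identity block in the other row, so the row space of the difference picks up at least $d/2$ extra dimensions. Making this rank count rigorous via~\eqref{math:sylvesterandtrivmat} and careful block manipulation is the technical heart of the argument; the remaining cases are routine once the distance formula~\eqref{math:dsrank} is applied. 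Finally, the ``in particular'' inequality follows by choosing each ingredient code of maximum size and reading off the cardinality.
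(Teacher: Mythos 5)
Your case decomposition matches the paper's, which proves this theorem as an instance of Lemma~\ref{lem:main}: the two within-family cases are exactly parts~(1) and~(2) of that lemma, and your treatment of them is fine (a small slip: $\mathcal{M}$ need not be MRD, only an RMC of minimum distance $d/2$). Two points need attention. First, you assume $\tau(B)=(I \mid B')$, but only $\mathcal{A}$ is required to be lifted; $\mathcal{B}$ is an arbitrary CDC, so the second family is $(R \mid \tau(B))$ with $\tau(B)$ a general RREF matrix. This does not hurt you: full rank follows from $\rk \tau(B)=k$, and disjointness of the families follows because the first family has all of its first $k$ columns as pivots while the second has only $\rk R \le k-d/2 < k$ pivots there. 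Second, and more importantly, you leave the cross case --- the only non-routine step --- as a sketch. It does go through, and more cleanly than your description suggests: writing the top codeword as $(I \mid X)$ with $X=(A' \mid M)$ and the bottom as $(R \mid Y)$ with $Y=\tau(B)$, row reduction and Inequalities~\eqref{math:subaddmat} and~\eqref{math:sylvesterandtrivmat} give
\begin{align*}
\rk\sm{I & X\\ R & Y}
= k+\rk(Y-RX)
\ge k+\rk(Y)-\rk(RX)
\ge k+k-\rk(R)
\ge k+d/2,
\end{align*}
which by Equation~\eqref{math:dsrank} is exactly $\dists \ge d$. This is a genuinely different route from the paper's: there the cross case is part~(3) of Lemma~\ref{lem:main}, which lower-bounds the subspace distance by the Hamming distance of pivot vectors (Lemma~\ref{lem:dhds}) and uses only $|\rk I - \rk R| \ge d/2$. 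The pivot-vector argument never touches the second block and is what allows the paper to replace the lifted code by an arbitrary CDC $\mathcal{A}$ in Theorems~\ref{theo:generalized_linkage} and~\ref{theo:generalized_linkage_multipleblocks}; your row-reduction argument is more elementary but leans on the explicit identity block supplied by the lifting of $\mathcal{A}$, so it proves this theorem while generalizing less directly.
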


Of course, the concatenation of an RMC with an RMC is again an RMC.
To be more precise, if $\mathcal{M}$ is an $(a \times b,\#\mathcal{M},d)_q$ RMC and $\mathcal{N}$ is an $(a \times c,\#\mathcal{N},d)_q$ RMC, then $\{(M \mid N) : M \in \mathcal{M}, N \in \mathcal{N}\}$ is an $(a \times (b+c),\#\mathcal{M}\cdot\#\mathcal{N},d)_q$ RMC, since Inequality~\eqref{math:rkineq} implies
\begin{align*}
&\rk((M \mid N) - (M' \mid N'))
=
\rk(M-M' \mid N-N')
\\\ge&
\max\{\rk(M-M'),\rk(N-N')\}
\ge
d
\end{align*}
for $M,M' \in \mathcal{M}$ and $N,N' \in \mathcal{N}$ with $(M \mid N) \ne (M' \mid N')$.

Hence, we can improve Theorem~\ref{theo:linkageCHWX} to the following construction.

\begin{theorem}\label{theo:linkageCHWX_trivialimprovement}
Let $d/2,k,s$ be integers with $2 \le d/2 \le k$ and $k \le s$.
Let $\mathcal{A}$ be a $(k+s,\#\mathcal{A},d;k)_q$ CDC such that each $A \in \mathcal{A}$ is of the form $\tau(A)=(I \mid A')$, i.e., it is a lifted RMC, and $\mathcal{B}$ be an $(s,\#\mathcal{B},d;k)_q$ CDC.
Let $\mathcal{R}$ be a $(k \times k,\#\mathcal{R},d/2;k-d/2)_q$ RMC.
Then
\begin{align*}
\mathcal{A}
\cup
\{\tau^{-1}(R \mid \tau(B)) : R \in \mathcal{R}, B \in \mathcal{B}\}
\end{align*}
is a $(k+s,\#\mathcal{A} + \#\mathcal{R} \cdot \#\mathcal{B},d;k)_q$ CDC.

In particular,
\begin{align*}
&A_q(k+s,d;k)
\\\ge&
M(q,k,s,d/2)
+
A_q(s,d;k)
\cdot
\Lambda(q,k,k,d/2,k-d/2).
\end{align*}
\end{theorem}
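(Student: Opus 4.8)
The plan is to verify directly that the displayed family is a CDC of the stated cardinality by bounding the subspace distance of every pair of distinct codewords, distinguishing three cases: both codewords lie in $\mathcal{A}$, both lie in the second family $\{\tau^{-1}(R \mid \tau(B))\}$, or one lies in each. The first case needs no work, since $\mathcal{A}$ is by hypothesis a $(k+s,\#\mathcal{A},d;k)_q$ CDC. For the cardinality I would separately record that the two families are disjoint and that $(R,B) \mapsto \tau^{-1}(R \mid \tau(B))$ is injective, and the ``in particular'' bound then follows by taking $\mathcal{A}$ to be a lifted MRD code of size $M(q,k,s,d/2)$, $\mathcal{B}$ an optimal $(s,A_q(s,d;k),d;k)_q$ CDC, and $\mathcal{R}$ an optimal RRMC of size $\Lambda(q,k,k,d/2,k-d/2)$.

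For two codewords $\tau^{-1}(R_1 \mid \tau(B_1))$ and $\tau^{-1}(R_2 \mid \tau(B_2))$ from the second family, I would rewrite the subspace distance using Equation~\eqref{math:dsrank} as $2\rk\sm{R_1 & \tau(B_1) \\ R_2 & \tau(B_2)} - 2k$ and show that this block rank is at least $k + d/2$. When $B_1 \ne B_2$, distinctness in $\mathcal{B}$ gives $\dim(B_1 + B_2) = \rk\sm{\tau(B_1) \\ \tau(B_2)} \ge k + d/2$, and Inequality~\eqref{math:rkineq} applied to the right-hand columns transfers this to the full matrix. When $B_1 = B_2$ (so that $R_1 \ne R_2$), subtracting the first block row from the second produces $\sm{R_1 & \tau(B) \\ R_2 - R_1 & 0}$, and the elementary block-rank bound $\rk\sm{A & B \\ C & 0} \ge \rk(B) + \rk(C)$ combined with $\rk(\tau(B)) = k$ and $\rk(R_2 - R_1) \ge d/2$ finishes this subcase.

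The mixed case is the one that genuinely uses the rank restriction on $\mathcal{R}$, and it is the step I expect to require the most care. Here I would pass to pivot vectors and invoke Lemma~\ref{lem:dhds}. A codeword of $\mathcal{A}$ has pivot vector $(1^k \mid 0^s)$ because it is a lifted RMC. For a codeword $\tau^{-1}(R \mid \tau(B))$, the number $p_1$ of pivots among the first $k$ columns of the reduced row echelon form equals the rank of those columns, which is exactly $\rk(R) \le k - d/2$; since the total number of pivots is $\rk(R \mid \tau(B)) = k$, the remaining $p_2 = k - p_1 \ge d/2$ pivots fall in the last $s$ columns. Splitting the Hamming distance of the two pivot vectors by Equation~\eqref{math:dheq}, Inequality~\eqref{math:dhlb} gives a contribution of at least $k - p_1 = p_2$ on the first $k$ coordinates and exactly $p_2$ on the last $s$ coordinates, so the total is at least $2p_2 \ge d$, whence $\dists \ge d$ by Lemma~\ref{lem:dhds}.

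The crux throughout is the observation that $p_1 = \rk(R)$, that is, that the rank bound $k - d/2$ on $\mathcal{R}$ is precisely what forces at least $d/2$ pivots of each second-family codeword into the $\tau(B)$-block; once this is established the remaining estimates are routine. Conveniently, this same pivot-weight count also settles disjointness of the two families for the cardinality claim, since $p_1 \le k - d/2 < k$ while every $\mathcal{A}$-codeword has all $k$ of its pivots in the first block.
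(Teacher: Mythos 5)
Your proposal is correct and matches the paper's argument in substance: the paper proves this theorem by invoking the three cases of Lemma~\ref{lem:main} (equivalently, as the specialization $\mathcal{A}=\{\tau^{-1}(I)\}$, $r=k$, $t=0$ of Theorem~\ref{theo:generalized_linkage}), and your three cases --- the rank bound via Equation~\eqref{math:dsrank} and Inequality~\eqref{math:rkineq} for $B_1\ne B_2$, the block row reduction for $B_1=B_2$, and the pivot-vector count via Lemma~\ref{lem:dhds}, Equation~\eqref{math:dheq}, and Inequality~\eqref{math:dhlb} for the mixed case --- are exactly cases (1), (2), and (3) of that lemma, re-derived inline. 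In particular your key observation that $\rk R\le k-d/2$ forces at least $d/2$ pivots into the $\tau(B)$-block is precisely the paper's condition $d/2\le|\rk(I)-\rk(R)|$ in Lemma~\ref{lem:main}(3).
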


\begin{lemma}
The bound in Theorem~\ref{theo:linkageCHWX_trivialimprovement} is equivalent to the bound in Theorem~\ref{theo:linkageCHWX} iff $k \le n$ or $n=0$ and stronger iff $0 < n < k$.
\end{lemma}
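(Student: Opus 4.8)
The plan is to bring both bounds to a common left-hand side and then reduce the comparison to the explicit formula for $M$. Since Theorem~\ref{theo:linkageCHWX} has left-hand side $A_q(n+2k,d;k)$ while Theorem~\ref{theo:linkageCHWX_trivialimprovement} has left-hand side $A_q(k+s,d;k)$, I would substitute $s=n+k$; the constraint $k \le s$ then reads $0 \le n$, which matches the hypothesis $0 \le n$ of Theorem~\ref{theo:linkageCHWX}, so the two bounds become directly comparable. After this substitution the summand $A_q(n+k,d;k) \cdot \Lambda(q,k,k,d/2,k-d/2)$ occurs verbatim in both right-hand sides, so the whole comparison collapses to comparing the two remaining first summands, namely $M(q,k,n+k,d/2)$ from Theorem~\ref{theo:linkageCHWX_trivialimprovement} against $M(q,k,n,d/2) \cdot M(q,k,k,d/2)$ from Theorem~\ref{theo:linkageCHWX}.

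Next I would evaluate these using $M(q,a,b,d/2) = \lceil q^{\max\{a,b\}(\min\{a,b\}-d/2+1)} \rceil$, writing $c = k-d/2+1 \ge 1$ (valid since $d/2 \le k$). Because $\min\{k,n+k\}=k$ and $\max\{k,n+k\}=n+k$, one gets $M(q,k,n+k,d/2)=q^{(n+k)c}$, and likewise $M(q,k,k,d/2)=q^{kc}$; both exponents are nonnegative integers, so the ceilings are trivial. The factor $q^{kc}$ is common to both sides, so after dividing it out the comparison is exactly between $q^{nc}$ (improved) and $M(q,k,n,d/2)$ (original), and it suffices to show that the former always dominates, with equality precisely when $k \le n$ or $n=0$.

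I would then split into the three regimes that partition $n \ge 0$. If $n \ge k$, then $\min\{k,n\}=k$, so $M(q,k,n,d/2)=q^{nc}$ and the two sides coincide; if $n=0$, then $\min\{k,n\}=0$ forces a negative exponent so that $M(q,k,0,d/2)=1=q^{0 \cdot c}$, again coinciding. This yields the equivalence claim for $k \le n$ or $n=0$. For $0 < n < k$ we have $\min\{k,n\}=n$, hence $M(q,k,n,d/2)=\lceil q^{k(n-d/2+1)} \rceil$, and I would compare the exponent $e_1 = k(n-d/2+1)$ of the original term with the exponent $e_2 = n(k-d/2+1)=nc$ of the improved term. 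The algebraic heart is the factorization $e_2 - e_1 = (d/2-1)(k-n)$, which is strictly positive because $d/2 \ge 2$ and $n<k$, so that $e_1 \le e_2-1$.

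The only genuine subtlety is handling the ceiling in $\lceil q^{e_1} \rceil$ when $e_1$ may be negative, which happens for small $n$. I would dispatch this in two cases: if $e_1 \ge 0$, the ceiling is trivial and $q^{e_2} \ge q^{e_1+1} = q \cdot q^{e_1} > q^{e_1}$ since $q \ge 2$; if $e_1 < 0$, then $\lceil q^{e_1} \rceil = 1$ while $q^{e_2}=q^{nc} \ge q \ge 2 > 1$ since $n \ge 1$ and $c \ge 1$. In either case $q^{e_2} > M(q,k,n,d/2)$ strictly, showing that Theorem~\ref{theo:linkageCHWX_trivialimprovement} is strictly stronger exactly when $0 < n < k$. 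I expect this ceiling bookkeeping, together with correctly tracking which of $k$ and $n$ is the maximum in the formula for $M$, to be the main (though minor) obstacle.
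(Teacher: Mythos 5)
Your proposal is correct and follows essentially the same route as the paper: cancel the common summand $A_q(n+k,d;k)\cdot\Lambda(q,k,k,d/2,k-d/2)$, reduce to comparing $q^{n(k-d/2+1)}$ with $\left\lceil q^{\max\{n,k\}(\min\{n,k\}-d/2+1)}\right\rceil$, and split into the cases $k\le n$, $n=0$, and $0<n<k$, where the key point is the sign of $(k-n)(d/2-1)$ (the paper writes this as $(n-k)(-d/2+1)\ge 0$). Your handling of the ceiling via the sign of $e_1$ is a minor repackaging of the paper's subcase split at $n<d/2$ versus $d/2\le n$ and is equally valid.
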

\begin{proof}
Note that $s=n+k$ and $0 \le n \Leftrightarrow k \le s$.
We have
\begin{align*}
& \text{bound in Theorem~\ref{theo:linkageCHWX_trivialimprovement} } \ge \text{ bound in Theorem~\ref{theo:linkageCHWX}} \\
\Leftrightarrow & M(q,k,s,d/2) \ge M(q,k,n,d/2) \cdot M(q,k,k,d/2) \\
\Leftrightarrow & q^{s(k-d/2+1)} \ge \left\lceil q^{\max\{n,k\}(\min\{n,k\}-d/2+1)} \right\rceil \cdot q^{k(k-d/2+1)} \\
\Leftrightarrow & q^{n(k-d/2+1)} \ge \left\lceil q^{\max\{n,k\}(\min\{n,k\}-d/2+1)} \right\rceil.
\end{align*}
If $k \le n$, the exponent $\max\{n,k\}(\min\{n,k\}-d/2+1) = n(k-d/2+1)$ is at least one and both sides of the inequality coincide.

If $0 \le n < \min\{d/2,k\}$, the exponent $\max\{n,k\}
(\min\{n,k\}-d/2+1) = k(n-d/2+1)$ is at most zero, so the right hand side of the inequality is one, while the left hand side is one iff $n=0$ and else greater than one.

If $d/2 \le n < k$, the exponent $\max\{n,k\}(\min\{n,k\}-d/2+1) = k(n-d/2+1)$ is at least $k$, so we continue:
\begin{align*}
\Leftrightarrow & q^{n(k-d/2+1)} \ge q^{k(n-d/2+1)} \\
\Leftrightarrow & n(k-d/2+1) \ge k(n-d/2+1) \\
\Leftrightarrow & n(-d/2+1) \ge k(-d/2+1) \\
\Leftrightarrow & (n-k)(-d/2+1) \ge 0.
\end{align*}
Due to $n < k$ and $2 \le d/2$, the left hand side is at least one, proving the statement.
\end{proof}

If $\mathcal{A}$ in Theorem~\ref{theo:linkageCHWX_trivialimprovement}, $\mathcal{A}$ and $\mathcal{M}$ in Theorem~\ref{theo:linkageCHWX} or $\mathcal{M}$ in Theorem~\ref{theo:linkageXC} are chosen to be of maximum size, respectively, then they give rise to so-called lifted maximum rank-distance (LMRD) codes and any superset of this particular subcode is upper bounded by more elaborate bounds first proved by Etzion and Silberstein in~\cite[Theorems~10 and~11]{MR3015712} and improved by the author in~\cite[Theorem~1]{MR3988525}, cf.~\cite[Proposition~99]{ubtepub4049}.
To overcome this difficulty, we do not restrict this part of the construction to lifted maximum rank-distance codes in Theorem~\ref{theo:generalized_linkage}.

\section{The generalized linkage construction}\label{sec:genlink}

\begin{lemma}\label{lem:main}
Let $k,r,s$ be positive integers and $A,C \in \F_q^{k \times r}$ and $B,D \in \F_q^{k \times s}$ be matrices such that $\rk(A \mid B) = \rk(C \mid D) = k$. If
\begin{enumerate}
\item $\rk A = \rk C = k$ and $d \le \dists(\tau^{-1}(A),\tau^{-1}(C))$,
\item $A = C$ and $\rk A = k$ and $d/2 \le \distr(B,D)$ or
\item $d/2 \le |\rk A - \rk C|$,
\end{enumerate}
then
\begin{align*}
d
\le&
\dists(\tau^{-1}(A \mid B),\tau^{-1}(C \mid D))
\\=&
\dists(\tau^{-1}(B \mid A),\tau^{-1}(D \mid C))
.
\end{align*}
\end{lemma}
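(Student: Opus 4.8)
The goal is to lower-bound the subspace distance
$\dists(\tau^{-1}(A \mid B),\tau^{-1}(C \mid D))$ in each of the three cases,
using the rank formula~\eqref{math:dsrank}. Since $\rk(A \mid B)=\rk(C \mid D)=k$,
both concatenated matrices have full row-rank $k$, so
$\dim(\tau^{-1}(A \mid B))=\dim(\tau^{-1}(C \mid D))=k$ and
Equation~\eqref{math:dsrank} gives
\begin{align*}
\dists(\tau^{-1}(A \mid B),\tau^{-1}(C \mid D))
= 2\rk\sm{A & B \\ C & D} - 2k.
\end{align*}
The symmetry claim (swapping the two column blocks) is immediate, since permuting
columns inside a matrix preserves its rank and preserves full row-rank, so both
equal $2\rk\sm{B & A \\ D & C}-2k$; I would dispose of this first. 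The whole
problem thus reduces to showing $\rk\sm{A & B \\ C & D} \ge k + d/2$ in each case.

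\emph{First I would handle case~(1).} Here $\rk A=\rk C=k$, so $A$ and $C$
already have full row-rank as $k \times r$ blocks. The key observation is that
the block matrix $\sm{A & B \\ C & D}$ has rank at least $\rk\sm{A \\ C}$ by
Inequality~\eqref{math:rkineq} applied to columns (the left block is a
submatrix). Since $\rk A=\rk C=k$ and by Equation~\eqref{math:dsrank}
$\dists(\tau^{-1}(A),\tau^{-1}(C))=2\rk\sm{A \\ C}-2k$, the hypothesis
$d \le \dists(\tau^{-1}(A),\tau^{-1}(C))$ gives $\rk\sm{A \\ C} \ge k+d/2$, and
hence $\rk\sm{A & B \\ C & D} \ge k+d/2$ as needed.

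\emph{Case~(2) is the genuinely different one, handled via row operations.}
Since $A=C$, subtracting the top block row from the bottom kills the lower-left
block and replaces the lower-right block by $D-B$:
\begin{align*}
\rk\sm{A & B \\ A & D} = \rk\sm{A & B \\ 0 & D-B}.
\end{align*}
Because $\rk A=k$, the top $k$ rows already span a $k$-dimensional space, and the
rank of the transformed matrix is $k + \rk(D-B) = k + \distr(B,D) \ge k+d/2$ by
hypothesis. I would justify the lower bound carefully: the bottom block
$\sm{0 & D-B}$ contributes rank exactly $\rk(D-B)$, and its row space meets the
top row space only in spaces supported on the last $s$ columns, but the top block
$A$ occupies all $k$ pivot positions across the full width — so one should argue
that the $k$ rows of $(A \mid B)$ together with any $\rk(D-B)$ independent rows of
$(0 \mid D-B)$ are independent, using $\rk A=k$ to guarantee the first $r$ columns
already give $k$ independent rows. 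This step — correctly accounting for the rank
contribution of the cleared block — is where I expect the main care to be
needed, though it is not deep.

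\emph{Case~(3) follows from a dimension/rank inequality on the left block alone.}
Without loss of generality $\rk A > \rk C$ (else swap the roles, which only
transposes the inequality). Using Inequality~\eqref{math:rkineq} again,
$\rk\sm{A & B \\ C & D} \ge \rk\sm{A \\ C} \ge \max\{\rk A,\rk C\} = \rk A$;
I would then want $\rk\sm{A \\ C} \ge k + d/2$. The point is that
$\rk\sm{A \\ C} \ge \rk A$ combined with the constraint $\rk(C \mid D)=k$, which
forces the bottom block row to contribute its missing rank from the $D$-columns;
more precisely I would show $\rk\sm{A & B \\ C & D} \ge \rk A + (k - \rk C)$ by
observing that after using $A$'s $\rk A$ independent rows, the bottom row
$(C \mid D)$ still has full rank $k$ as a standalone block, so it contributes at
least $k-\rk C$ further independent rows not in the span of the top block.
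Then $\rk A + k - \rk C = k + (\rk A - \rk C) \ge k + d/2$ by hypothesis. Tracking
which rows genuinely remain independent after accounting for overlaps is the
delicate bookkeeping here, and is the analogue of the obstacle in case~(2);
everything else is a direct application of~\eqref{math:dsrank}
and~\eqref{math:rkineq}.
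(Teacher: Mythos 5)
Your proposal is correct, and cases (1) and (2) coincide with the paper's proof: the reduction of the claim to $\rk\sm{A&B\\C&D} \ge k+d/2$ via Equation~\eqref{math:dsrank}, the column-permutation argument for the symmetry statement, the submatrix bound $\rk\sm{A&B\\C&D}\ge\rk\sm{A\\C}$ for case (1), and the block row reduction to $k+\rk(D-B)$ for case (2) are all exactly what the paper does. Case (3) is where you genuinely diverge. The paper does not touch the block matrix at all there; it invokes the pivot-vector bound $\disth(\pivot(U),\pivot(W))\le\dists(U,W)$ (Lemma~\ref{lem:dhds}) together with \eqref{math:dheq} and \eqref{math:dhlb}, splitting the pivot vectors into their first $r$ and last $s$ coordinates to get $\dists \ge |\rk A-\rk C|+|(k-\rk A)-(k-\rk C)| = 2|\rk A-\rk C|$. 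You instead prove the rank estimate $\rk\sm{A&B\\C&D}\ge \rk A+(k-\rk C)$ directly. Your estimate is true and suffices: writing $\pi$ for projection onto the first $r$ coordinates and $V_1,V_2$ for the row spaces of $(A\mid B)$ and $(C\mid D)$, one has $\dim(V_1\cap V_2)\le\dim\pi(V_1\cap V_2)+\dim(V_1\cap V_2\cap\ker\pi)\le\rk C+(k-\rk A)$, whence $\dim(V_1+V_2)\ge k+\rk A-\rk C$; equivalently, $\rk A$ rows of the top block with independent $A$-parts together with the $k-\rk C$ dimensions of $V_2\cap\ker\pi$ are jointly independent. This is the ``delicate bookkeeping'' you flag, and it does close. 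The trade-off: the paper's route outsources the counting to a known lemma and is shorter on the page, while yours is self-contained linear algebra and avoids introducing pivot vectors into the proof at all; both yield exactly the bound $2|\rk A-\rk C|$, so neither is sharper than the other.
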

\begin{proof}
For two subspaces $U$ and $W$ of dimension $k$ in a common vector space, we have with Equation~\eqref{math:dsrank}
\begin{align*}
d \le \dists(U,W) \Leftrightarrow \rk\sm{\tau(U)\\\tau(W)} \ge k+d/2.
\end{align*}
Since $\rk\sm{M&N\\O&P}=\rk\sm{N&M\\P&O}=\rk\sm{\RREF(N&M)\\\RREF(P&O)}$ for any matrices $M,N,O,P$ with compatible sizes and ambient fields, we get
\begin{align*}
\dists(\tau^{-1}(A \mid B),\tau^{-1}(C \mid D))
=
\dists(\tau^{-1}(B \mid A),\tau^{-1}(D \mid C)).
\end{align*}

The statement in question is $\rk\sm{A&B\\C&D} \ge k+d/2$.

\begin{enumerate}
\item
Using Inequality~\eqref{math:rkineq}, we obtain $\rk\sm{A&B\\C&D} \ge \rk\sm{A\\C}$ and $\dists(\tau^{-1}(A),\tau^{-1}(C)) \ge d$ is equivalent to $\rk\sm{\RREF(A)\\\RREF(C)} = \rk\sm{A\\C} \ge k+d/2$.

\item
Since $A=C$ of full rank, we have $\rk\sm{A&B\\C&D}=\rk\sm{A&B\\A&D}=\rk\sm{A&B\\0&D-B}=\rk\sm{I&0\\0&D-B}=k+\rk(D-B)$ and the definition of the rank-distance concludes this case.

\item
Here, we use Lemma~\ref{lem:dhds}, Equality~\eqref{math:dheq}, Inequality~\eqref{math:dhlb}, and let $[v]$ ($\{v\}$) denote the first $r$ (last $s$) entries in a vector $v$ so that $v=([v] \mid \{v\})$, $\weight([\pivot(A \mid B)]) = \rk A$, and $\weight([\pivot(C \mid D)]) = \rk C$.
Hence,
\begin{align*}
&\dists(\tau^{-1}(A \mid B),\tau^{-1}(C \mid D))
\\\ge
&\disth(\pivot(\tau^{-1}(A \mid B)),\pivot(\tau^{-1}(C \mid D)))
\\=
&\disth(\pivot(A \mid B),\pivot(C \mid D))
\\=
&\disth([\pivot(A \mid B)],[\pivot(C \mid D)])
\\+&\disth(\{\pivot(A \mid B)\},\{\pivot(C \mid D)\})
\\\ge
&|\weight([\pivot(A \mid B)])-\weight([\pivot(C \mid D)])|
\\+&|\weight(\{\pivot(A \mid B)\})-\weight(\{\pivot(C \mid D)\})|
\\=
&|\rk A - \rk C|+|(k-\rk A) - (k-\rk C)|
\\=
&2|\rk A - \rk C|
\end{align*}
shows that $|\rk A - \rk C| \ge d/2$ implies the minimum distance.
\end{enumerate}
\end{proof}

Lemma~\ref{lem:main} can of course be generalized to at least two blocks and this is used in Theorem~\ref{theo:generalized_linkage_multipleblocks}.

\begin{lemma}\label{lem:main_multipleblocks}
Let $k,m,n_i$ be positive integers and $A_i,B_i \in \F_q^{k \times n_i}$ be matrices such that $\rk(A_1 \mid \ldots \mid A_m) = \rk(B_1 \mid \ldots \mid B_m) = k$ ($1 \le i \le m$). If
\begin{enumerate}
\item $\rk A_i = \rk B_i = k$ and $d \le \dists(\tau^{-1}(A_i),\tau^{-1}(B_i))$,
\item $A_i = B_i$ and $\rk A_i = k$ and $d/2 \le \distr((A_1 \mid \ldots \mid A_{i-1} \mid A_{i+1} \mid \ldots \mid A_m),(B_1 \mid \ldots \mid B_{i-1} \mid B_{i+1} \mid \ldots \mid B_m))$ or
\item $d/2 \le |\rk A_i - \rk B_i|$,
\end{enumerate}
for some $i \in \{1,\ldots,m\}$, then
\begin{align*}
d
\le&
\dists(\tau^{-1}(A_{1} \mid \ldots \mid A_{m}),\tau^{-1}(B_{1} \mid \ldots \mid B_{m}))
.
\end{align*}
\end{lemma}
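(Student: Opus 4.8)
The plan is to reduce the $m$-block statement to the two-block Lemma~\ref{lem:main} by pitting the distinguished block $i$ against the horizontal concatenation of all the remaining blocks. Exactly as in the proof of Lemma~\ref{lem:main}, Equation~\eqref{math:dsrank} shows that, since both row-spans have dimension $k$, the inequality
\[
d \le \dists(\tau^{-1}(A_1 \mid \ldots \mid A_m),\tau^{-1}(B_1 \mid \ldots \mid B_m))
\]
is equivalent to the rank bound $\rk\sm{A_1 \mid \ldots \mid A_m\\B_1 \mid \ldots \mid B_m} \ge k+d/2$. The first step is to note that applying one and the same permutation of the column blocks to both rows changes neither the rank of the stacked $2k$-row matrix nor either of the two row-ranks; by Equation~\eqref{math:dsrank} it therefore leaves the subspace-distance in question unchanged. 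This is the general form of the block-swap identity $\rk\sm{M&N\\O&P}=\rk\sm{N&M\\P&O}$ already used for Lemma~\ref{lem:main}.

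I would use this invariance to move the block of index $i$ to the front and set $A = A_i$, $C = B_i$, $B = (A_1 \mid \ldots \mid A_{i-1} \mid A_{i+1} \mid \ldots \mid A_m)$, and $D = (B_1 \mid \ldots \mid B_{i-1} \mid B_{i+1} \mid \ldots \mid B_m)$. The same permutation invariance of the rank of a horizontal concatenation turns the hypothesis $\rk(A_1 \mid \ldots \mid A_m) = \rk(B_1 \mid \ldots \mid B_m) = k$ into $\rk(A \mid B) = \rk(C \mid D) = k$, so the ambient-rank assumption of Lemma~\ref{lem:main} holds. Under this identification the three alternative hypotheses here are literally the three cases of Lemma~\ref{lem:main}: case (1) is $\rk A = \rk C = k$ with $d \le \dists(\tau^{-1}(A),\tau^{-1}(C))$; case (2) is $A = C$, $\rk A = k$, and $d/2 \le \distr(B,D)$, where $\distr(B,D) = \rk(B-D)$ is precisely the rank distance of the two concatenations of the remaining blocks; and case (3) is $d/2 \le |\rk A - \rk C|$. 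Whichever holds, Lemma~\ref{lem:main} gives $d \le \dists(\tau^{-1}(A \mid B),\tau^{-1}(C \mid D))$, and undoing the block permutation via the invariance from the first step yields the asserted bound.

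I expect no genuine obstacle: all the content is carried by the two-block Lemma~\ref{lem:main}, and the generalization amounts to grouping the $m$ blocks as ``block $i$'' versus ``everything else'' and is otherwise bookkeeping. The only point deserving a line of care is the permutation invariance of both the stacked rank and of $\dists$; this is immediate from Equation~\eqref{math:dsrank}, since a simultaneous permutation of the columns of the two generating matrices only permutes the columns of their common stack (leaving its rank fixed) and fixes the two row-ranks, equivalently it is the action of a coordinate isomorphism of the ambient space, which preserves dimensions of sums and intersections.
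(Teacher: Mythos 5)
Your proposal is correct and follows exactly the paper's own argument: permute the blocks so that the distinguished index comes first (rank and hence subspace-distance being invariant under a simultaneous block permutation, via Equation~\eqref{math:dsrank}), then apply Lemma~\ref{lem:main} with $A=A_i$, $C=B_i$ and $B$, $D$ the concatenations of the remaining blocks. No differences worth noting.
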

\begin{proof}
Since
\begin{align*}
\rk\sm{ A_{\pi(1)} \mid \ldots \mid A_{\pi(m)} \\ B_{\pi(1)} \mid \ldots \mid B_{\pi(m)} }
=
\rk\sm{ A_{1} \mid \ldots \mid A_{m} \\ B_{1} \mid \ldots \mid B_{m} }
\end{align*}
for any permutation $\pi$ on $\{1,\ldots,m\}$ we can by Equation~\eqref{math:dsrank} assume that $i=1$.

Then all three statements follow by the corresponding three statements in Lemma~\ref{lem:main} using $r = n_1$, $s = n_2 + \ldots + n_m$, $A = A_1$, $B = (A_{2} \mid \ldots \mid A_{m})$, $C = B_1$, and $D = (B_{2} \mid \ldots \mid B_{m})$.
\end{proof}

We use Lemma~\ref{lem:main} to generalize Theorem~\ref{theo:linkageHK} and Theorem~\ref{theo:linkageCHWX_trivialimprovement} in a single construction.

\begin{theorem}\label{theo:generalized_linkage}
Let $d/2,k,r,s,t$ be integers with $2 \le d/2 \le k \le (r+s)/2$, $k \le r$, $k \le s+t$, and $0 \le t \le k-d/2$.
Let $\mathcal{A}$ be an $(r,\#\mathcal{A},d;k)_q$ CDC and $\mathcal{B}$ be an $(s+t,\#\mathcal{B},d;k)_q$ CDC.
Let $\mathcal{M}$ be a $(k \times s,\#\mathcal{M},d/2)_q$ RMC and $\mathcal{R}$ be a $(k \times (r-t),\#\mathcal{R},d/2;k-d/2-t)_q$ RMC.
Then
\begin{align*}
&\{\tau^{-1}(\tau(A) \mid M) : A \in \mathcal{A}, M \in \mathcal{M}\}
\\\cup&\{\tau^{-1}(R \mid \tau(B)) : R \in \mathcal{R}, B \in \mathcal{B}\}
\end{align*}
is an $(r+s,\#\mathcal{A} \cdot \#\mathcal{M} + \#\mathcal{R} \cdot \#\mathcal{B},d;k)_q$ CDC.

In particular,
\begin{align*}
&A_q(r+s,d;k)
\\\ge&
A_q(r,d;k)
\cdot
M(q,k,s,d/2)
\\+&
A_q(s+t,d;k)
\cdot
\Lambda(q,k,r-t,d/2,k-d/2-t).
\end{align*}
\end{theorem}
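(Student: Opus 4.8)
The plan is to show that all listed subspaces are genuine $k$-subspaces of $\F_q^{r+s}$ and that every pair of distinct codewords has subspace-distance at least $d$, handling the three types of pairs (both in the first family, both in the second family, one in each) by the three cases of Lemma~\ref{lem:main}. First I would observe that the column counts match, $r+s = (r-t)+(s+t)$, and that both $\tau(A)\mid M$ and $R\mid\tau(B)$ have full row-rank $k$: the blocks $\tau(A)$ and $\tau(B)$ already have rank $k$, so by Inequality~\eqref{math:rkineq} so do the concatenations, whence $\tau^{-1}$ is defined on them and returns $k$-subspaces.

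For two codewords $\tau^{-1}(\tau(A)\mid M)$ and $\tau^{-1}(\tau(A')\mid M')$ of the first family I would split after the first $r$ columns, so the left blocks $\tau(A),\tau(A')$ have rank $k$. If $A\neq A'$ then $\dists(A,A')\ge d$ by the minimum distance of $\mathcal{A}$ and case~(1) of Lemma~\ref{lem:main} applies; if $A=A'$ but $M\neq M'$ then $\distr(M,M')\ge d/2$ by the minimum distance of $\mathcal{M}$ and case~(2) applies. For two codewords of the second family I would first use the swap identity in Lemma~\ref{lem:main} to replace $\dists(\tau^{-1}(R\mid\tau(B)),\tau^{-1}(R'\mid\tau(B')))$ by $\dists(\tau^{-1}(\tau(B)\mid R),\tau^{-1}(\tau(B')\mid R'))$, moving the full-rank blocks to the left, and then argue as before with cases~(1) and~(2), now using the distances of $\mathcal{B}$ and $\mathcal{R}$. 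Since distinct parameter choices within a family already force distance at least $d>0$, both indexing maps are injective and the families contribute $\#\mathcal{A}\cdot\#\mathcal{M}$ and $\#\mathcal{R}\cdot\#\mathcal{B}$ codewords respectively.

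The main obstacle is the cross case: a codeword $\tau^{-1}(\tau(A)\mid M)$ against a codeword $\tau^{-1}(R\mid\tau(B))$, whose natural block splits occur at different columns ($r$ versus $r-t$). I would split both after the first $r$ columns and apply Lemma~\ref{lem:main} with left blocks $\tau(A)$ and $R$ together with the first $t$ columns of $\tau(B)$. The first has rank $k$; the second, by Inequality~\eqref{math:rkineq} and the rank restriction on $\mathcal{R}$, has rank at most $\rk(R)+t\le(k-d/2-t)+t=k-d/2$. Thus the two left-block ranks differ by at least $d/2$, so case~(3) of Lemma~\ref{lem:main} gives distance at least $d$; this is exactly the point that the rank bound $k-d/2-t$ on $\mathcal{R}$ and the overlap parameter $t$ are engineered to guarantee. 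In particular the two families are disjoint, so the total cardinality is $\#\mathcal{A}\cdot\#\mathcal{M}+\#\mathcal{R}\cdot\#\mathcal{B}$.

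Finally, the stated lower bound on $A_q(r+s,d;k)$ follows by choosing $\mathcal{A}$, $\mathcal{M}$, $\mathcal{B}$, and $\mathcal{R}$ of maximum sizes $A_q(r,d;k)$, $M(q,k,s,d/2)$, $A_q(s+t,d;k)$, and $\Lambda(q,k,r-t,d/2,k-d/2-t)$, respectively.
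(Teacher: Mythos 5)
Your proposal is correct and follows essentially the same route as the paper's proof: cases (1) and (2) of Lemma~\ref{lem:main} handle pairs within each family (with the swap identity built into that lemma handling the reversed block order of the second family), and the cross case is settled by splitting after the first $r$ columns, bounding $\rk(R \mid [\tau(B)]) \le (k-d/2-t)+t = k-d/2$ via Inequality~\eqref{math:rkineq}, and invoking case (3). No gaps.
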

\begin{proof}
The distinctness of codewords follows from the minimum distance.

Let $A,A' \in \mathcal{A}$ and $M,M' \in \mathcal{M}$.
If $A=A'$, then~(2) in Lemma~\ref{lem:main} shows $d \le \dists(\tau^{-1}(\tau(A) \mid M),\tau^{-1}(\tau(A') \mid M'))$, else, i.e., $A \ne A'$, then~(1) in Lemma~\ref{lem:main} shows the same statement.

Let $R,R' \in \mathcal{R}$ and $B,B' \in \mathcal{B}$.
If $B=B'$, then~(2) in Lemma~\ref{lem:main} shows $d \le \dists(\tau^{-1}(R \mid \tau(B)),\tau^{-1}(R' \mid \tau(B')))$, else, i.e., $B \ne B'$, then~(1) in Lemma~\ref{lem:main} shows the same statement.

Let $A \in \mathcal{A}$, $M \in \mathcal{M}$, $R \in \mathcal{R}$, and $B \in \mathcal{B}$.
By $[\tau(B)]$ ($\{\tau(B)\}$) we denote the first $t$ (last $s$) columns of $\tau(B)$, in particular $\tau(B) = ([\tau(B)] \mid \{\tau(B)\})$.
Then, the condition in~(3) in Lemma~\ref{lem:main} is $d/2 \le |\rk(\tau(A))-\rk(R \mid [\tau(B)])| = |k-\rk(R \mid [\tau(B)])| = k-\rk(R \mid [\tau(B)]) \Leftrightarrow \rk(R \mid [\tau(B)]) \le k-d/2$.
By the choice of $\mathcal{R}$, we have $\rk R \le k-d/2-t$ and $\rk [\tau(B)] \le t$, so that the statement follows with Inequality~\eqref{math:rkineq}.
\end{proof}

If $\mathcal{A}=\{\tau^{-1}(I)\}$, $r=k$, and $t=0$ is chosen in Theorem~\ref{theo:generalized_linkage}, we get Theorem~\ref{theo:linkageCHWX_trivialimprovement} as special case and if $\mathcal{R}=\{0\}$ and $t=k-d/2$, we get Theorem~\ref{theo:linkageHK} as special case.

This fits also in the framework of Kurz~\cite{kurz2019note}, cf. Theorem~\ref{theo:linkageK}, providing an alternative proof of Theorem~\ref{theo:generalized_linkage}.

\begin{lemma}\label{lem:Bimproved}
Let $d/2,k,r,s,t$ be integers with $2 \le d/2 \le k$, $0 \le r$, $k \le s+t$, and $0 \le t \le \min\{k-d/2,r\}$.
Let $\mathcal{B}$ be an $(s+t,\#\mathcal{B},d;k)_q$ CDC, $\mathcal{R}$ be a $(k \times (r-t),\#\mathcal{R},d/2;k-d/2-t)_q$ RMC, and $W=\tau(0 \mid I)$ of dimension $s$ in $\F_q^{r+s}$.

Then $\dim(W \cap \tau^{-1}(R \mid \tau(B)) ) \ge d/2$ for all $R \in \mathcal{R}$ and $B \in \mathcal{B}$.

In particular,
\begin{align*}
&B_q(r+s,s,d;k)
\\\ge&
A_q(s+t,d;k)
\cdot
\Lambda(q,k,r-t,d/2,k-d/2-t).
\end{align*}
\end{lemma}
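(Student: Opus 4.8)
The plan is to isolate the ``second half'' of the construction in Theorem~\ref{theo:generalized_linkage} and verify directly that it fits the definition of $B_q$ from Theorem~\ref{theo:linkageK}; combined with Theorem~\ref{theo:linkageK}(2) this is exactly what yields the alternative proof of Theorem~\ref{theo:generalized_linkage}. Two things must be shown: the intersection-dimension bound $\dim(W \cap \tau^{-1}(R \mid \tau(B))) \ge d/2$, and then the cardinality estimate for $B_q$.

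First I would note that each $\tau^{-1}(R \mid \tau(B))$ is a well-defined $k$-subspace: since $\tau(B)$ already has rank $k$, Inequality~\eqref{math:rkineq} forces $\rk(R \mid \tau(B)) = k$, so the matrix has full row rank. To compute the intersection with the fixed $s$-subspace $W$ (the row span of $(0 \mid I)$, i.e.\ the vectors whose first $r$ coordinates vanish), I would parametrize an arbitrary element of the codeword as $y(R \mid \tau(B))$ for $y \in \F_q^{1 \times k}$; full row rank makes the map $y \mapsto y(R \mid \tau(B))$ injective. Writing $[\tau(B)]$ for the first $t$ columns of $\tau(B)$, such an element lies in $W$ precisely when its first $r$ coordinates vanish, i.e.\ when $y(R \mid [\tau(B)]) = 0$. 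Hence $W \cap \tau^{-1}(R \mid \tau(B))$ is isomorphic to the left kernel of the $k \times r$ matrix $(R \mid [\tau(B)])$, so $\dim(W \cap \tau^{-1}(R \mid \tau(B))) = k - \rk(R \mid [\tau(B)])$.

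The key estimate then mirrors case~(3) of Lemma~\ref{lem:main}: the choice of $\mathcal{R}$ gives $\rk R \le k - d/2 - t$, while $[\tau(B)]$ has only $t$ columns, so $\rk[\tau(B)] \le t$; subadditivity of rank under horizontal concatenation (Inequality~\eqref{math:rkineq}) yields $\rk(R \mid [\tau(B)]) \le k - d/2$, whence $\dim(W \cap \tau^{-1}(R \mid \tau(B))) \ge d/2$ for all $R \in \mathcal{R}$ and $B \in \mathcal{B}$, proving the first assertion.

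For the ``In particular'' bound I would show that $\mathcal{C} = \{\tau^{-1}(R \mid \tau(B)) : R \in \mathcal{R},\, B \in \mathcal{B}\}$ is an $(r+s,\#\mathcal{R}\cdot\#\mathcal{B},d;k)_q$ CDC by the same distance argument used for the second part of the code in Theorem~\ref{theo:generalized_linkage}. Using the symmetry $\dists(\tau^{-1}(R \mid \tau(B)),\tau^{-1}(R' \mid \tau(B'))) = \dists(\tau^{-1}(\tau(B) \mid R),\tau^{-1}(\tau(B') \mid R'))$ supplied by Lemma~\ref{lem:main} and applying that lemma with the full-rank block $\tau(B)$ placed first, the case $B = B'$ (so $R \ne R'$) falls under case~(2) and gives distance $\ge d$ from $\distr(R,R') \ge d/2$, while the case $B \ne B'$ falls under case~(1) and gives distance $\ge d$ from $\dists(B,B') \ge d$. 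Since $d > 0$, distinct pairs $(R,B)$ give distinct codewords, so $\#\mathcal{C} = \#\mathcal{R}\cdot\#\mathcal{B}$; every codeword meets $W$ in dimension $\ge d/2$ by the first part, so $\mathcal{C}$ witnesses $B_q(r+s,s,d;k) \ge \#\mathcal{R}\cdot\#\mathcal{B}$, and choosing $\mathcal{B}$ and $\mathcal{R}$ of maximal sizes $A_q(s+t,d;k)$ and $\Lambda(q,k,r-t,d/2,k-d/2-t)$ gives the claim. I expect the intersection-dimension computation to be the only substantive step; the distance argument is quoted from Theorem~\ref{theo:generalized_linkage}, and the checks that the constraints $0 \le t \le \min\{k-d/2,r\}$ and $k \le s+t$ keep all matrix dimensions and the rank bound $k-d/2-t \ge 0$ meaningful are routine.
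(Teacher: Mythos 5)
Your proof is correct and follows essentially the same route as the paper: both reduce the intersection to $\dim(W \cap \tau^{-1}(R \mid \tau(B))) = k - \rk(R \mid [\tau(B)])$ (you via the left kernel, the paper via $\dim(W)+\dim(U)-\dim(W+U)$ and block-matrix row reduction) and then apply subadditivity of rank to get the bound $\ge d/2$. Your explicit verification that the code is a $(r+s,\#\mathcal{R}\cdot\#\mathcal{B},d;k)_q$ CDC via Lemma~\ref{lem:main} is a detail the paper leaves implicit (it is the distance argument already given in Theorem~\ref{theo:generalized_linkage}), and is welcome.
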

\begin{proof}
By $[\tau(B)]$ ($\{\tau(B)\}$) we denote the first $t$ (last $s$) columns of $\tau(B)$, in particular $\tau(B) = ([\tau(B)] \mid \{\tau(B)\})$.
Then we have with Inequality~\eqref{math:rkineq}
\begin{align*}
&\dim(W \cap \tau^{-1}(R \mid \tau(B)) )
\\=& \dim(W) \!+\! \dim(\tau^{-1}(R \mid \tau(B))) \!-\! \dim(W \!+\! \tau^{-1}(R \mid \tau(B)) )
\\=& s + k - \dim(\tau^{-1}(0 \mid 0 \mid I) + \tau^{-1}(R \mid [\tau(B)] \mid \{\tau(B)\}) )
\\=& s + k - \rk\sm{ 0 & 0 & I \\ R & [\tau(B)] & \{\tau(B)\} }
\\=& s + k - \rk\sm{ 0 & 0 & I \\ R & [\tau(B)] & 0 } = k - \rk\sm{ R & [\tau(B)] }
\\\ge& k - \rk R - \rk[\tau(B)] \ge k - (k-d/2-t) - (t) = d/2
.
\end{align*}
\end{proof}

Then~(1) and~(2) in Theorem~\ref{theo:linkageK} together with Lemma~\ref{lem:Bimproved} imply Theorem~\ref{theo:generalized_linkage}.

Independent to this paper, He developed in~\cite{he2019construction} a variation of the generalized linkage construction (Theorem~\ref{theo:generalized_linkage}).
The construction~\cite[Theorem~2]{he2019construction} arises as special case of Theorem~\ref{theo:generalized_linkage} if $t=0$ and $\Lambda(q,k,r,d/2,k-d/2)$ is replaced by $\Delta(q,k,r,d/2,k-d/2)-1$.
In particular, the lower bound provided by Theorem~\ref{theo:generalized_linkage} is strictly better than the lower bound provided by~\cite[Theorem~2]{he2019construction}.
Furthermore, \cite[Corollary~1]{he2019construction} requires $k \ge d$, cf.~\cite[Section~4]{he2019construction}, in contrast to Theorem~\ref{theo:generalized_linkage}.
In~\cite[Section~4]{he2019construction}, He asks for generalizations to $k \le d$ and our generalized linkage construction (Theorem~\ref{theo:generalized_linkage}) provides an answer.

Note, that there are infinite families of parameters showing that the consideration of $t>0$ is justified.
For example, consider $v=7=r+s$, $d=4$, and $k=3$.
Then, the maximum cardinalities of the ingredients of the generalized linkage construction are well known and in the notation of Theorem~\ref{theo:generalized_linkage}:

{
\setlength{\tabcolsep}{5pt}
\begin{tabular}{lll|llll|l}
r&s&t&$\#\mathcal{A}$&$\#\mathcal{M}$&$\#\mathcal{R}$&$\#\mathcal{B}$&$A_q(7,4;3)\ge$\\
\hline
3&4&0&$1$&$q^8$&$[3]_q$&$1$&$q^8\!+\!q^2\!+\!q\!+\!1$\\
3&4&1&$1$&$q^8$&$1$&$q^3\!+\!1$&$q^8\!+\!q^3\!+\!1$\\
4&3&0&$1$&$q^6$&$[3]_q$&$1$&$q^6\!+\!q^2\!+\!q\!+\!1$\\
4&3&1&$1$&$q^6$&$1$&$1$&$q^6\!+\!1$\\
5&2&1&$q^3\!+\!1$&$q^3$&$1$&$1$&$q^6\!+\!q^3\!+\!1$\\
\end{tabular}
}

Here, we use $A_q(5,4;2)=q^3+1$ by~\cite{MR404010}, (1), and (4) of Theorem~\ref{theo:lambdaequalities}.
In particular, the largest code constructed by the generalized linkage construction has cardinality $q^8+q^3+1$, uses $r=3$, $s=4$, and $t=1$, and its cardinality is strictly larger than the codes arising by choosing different parameters.

Independently to this paper, Cossidente, Kurz, Marino, and Pavese developed in~\cite[Lemma~4.1]{coss2019combining} a generalization of \cite[Theorem~4.1]{chen2019new} having the property that neither one of Theorem~\ref{theo:generalized_linkage} and \cite[Lemma~4.1]{coss2019combining} is a special case of the other.
In the following theorem, we generalize both constructions in a single construction.

\begin{theorem}\label{theo:generalized_linkage_multipleblocks}
Let $d/2,k,m,n_i,t_i$ be integers with $2 \le m$, $2 \le d/2 \le k \le (\sum_{i=1}^{m} n_i)/2$, $k \le n_i+t_i$, and $0 \le t_i \le \min\{k-d/2,n_{i-1}\}$ ($2 \le i$), $t_1=0$ ($1 \le i \le m$).
Let
\begin{itemize}
\item $\tau^{-1}(\mathcal{C}_i)$ be $(n_i+t_i,C_i,d;k)_q$ CDCs,
\item $\mathcal{M}_i$ be $(k \times n_i,M_i,d/2)_q$ RMCs,
\item $\mathcal{R}_i$ be $(k \times n_i,R_i,d/2;k-d/2)_q$ RRMCs,
\item $\mathcal{S}_i$ be $(k \times (n_i-t_{i+1}),S_i,d/2;k-d/2-t_{i+1})_q$ RRMCs ($i < m$), and
\item $\mathcal{S}_{m}$ be a set of size $S_m=1$ consisting of an empty matrix
\end{itemize}
for all $i \in \{1,\ldots,m\}$, then
\begin{align*}
\bigcup_{i=1}^{m}
\{ \tau^{-1}( r_1 \mid \ldots \mid r_{i-2} \mid s_{i-1} \mid c_i \mid m_{i+1} \mid \ldots \mid m_{m} )
\\
:
r_j \in \mathcal{R}_j \quad (1 \le j \le i-2),
s_{i-1} \in \mathcal{S}_{i-1}, c_i \in \mathcal{C}_i,
\\
m_w \in \mathcal{M}_w \quad (i+1 \le w \le m)
\}
\end{align*}
is a $(\sum_{i=1}^{m} n_i,N,d;k)_q$ CDC with
\begin{align*}
N = \sum_{i=1}^{m} C_i \cdot S_i \cdot  \prod_{j=1}^{i-2} R_j \cdot \prod_{w=i+1}^{m} M_w
.
\end{align*}

In particular,
\begin{align*}
&A_q(\textstyle{\sum_{i=1}^{m} n_i},d;k) \ge A_q(n_1,d;k)
\\&\cdot \prod_{w=2}^{m} M(q,k,n_w,d/2)+\sum_{i=2}^{m} A_q(n_i+t_i,d;k)
\\&\cdot \Lambda(q,k,n_{i-1}-t_{i},d/2,k-d/2-t_{i}) \cdot
\\&\prod_{j=1}^{i-2} \Lambda(q,k,n_j,d/2,k-d/2) \cdot \prod_{w=i+1}^{m} M(q,k,n_w,d/2).
\end{align*}
\end{theorem}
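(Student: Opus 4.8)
The plan is to reduce the verification of the minimum distance to three cases that are handled by the multi-block distance lemma (Lemma~\ref{lem:main_multipleblocks}), exactly as Theorem~\ref{theo:generalized_linkage} reduces to Lemma~\ref{lem:main}. First I would fix two distinct codewords $U = \tau^{-1}(x_1 \mid \ldots \mid x_m)$ and $U' = \tau^{-1}(y_1 \mid \ldots \mid y_m)$ of the constructed set, where the block $x_i$ (respectively $y_i$) lives in $\F_q^{k \times n_i}$ and is built from the ingredients $\mathcal{R}_j$, $\mathcal{S}_{i-1}$, $\mathcal{C}_i$, $\mathcal{M}_w$ according to the union index. The argument splits on whether $U$ and $U'$ come from the same summand $i$ or from different summands $i < i'$ of the outer union.

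In the \emph{same-summand} case, the codeword block coming from $\mathcal{C}_i$ is a lifted RREF matrix of rank $k$ occupying the $i$-th position. If the two $\mathcal{C}_i$-components agree, then the differing data lies in the surrounding rank-metric blocks, and I would invoke case~(2) of Lemma~\ref{lem:main_multipleblocks} with the distinguished index equal to $i$: the fixed full-rank block forces the distance to be controlled by the rank-distance of the concatenation of all remaining blocks, which is at least $d/2$ since each $\mathcal{M}_w$, $\mathcal{R}_j$, and $\mathcal{S}_{i-1}$ is a rank-metric code of minimum distance $d/2$ (and Inequality~\eqref{math:rkineq} shows the concatenated rank-distance dominates each factor). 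If instead the $\mathcal{C}_i$-components differ, then case~(1) applies, using that $\tau^{-1}(\mathcal{C}_i)$ has subspace-distance at least $d$.

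In the \emph{cross-summand} case with $i < i'$, I would use the pivot/rank argument of case~(3). The key observation is that in the $i'$-th block the codeword $U$ (from summand $i$) contributes a matrix from $\mathcal{M}_{i'}$, whereas $U'$ (from summand $i'$) contributes its lifted full-rank $\mathcal{C}_{i'}$-block together with an adjacent $\mathcal{S}_{i'-1}$-block; the rank mismatch between a full-rank $k$ block and the low-rank surrounding blocks of the other codeword must be at least $d/2$. Concretely, the decisive estimate mirrors Lemma~\ref{lem:Bimproved}: the relevant submatrix $\rk(\mathcal{R}\text{-part} \mid \mathcal{S}\text{-part})$ is bounded by $(k-d/2-t_{i'}) + t_{i'} = k-d/2$ via Inequality~\eqref{math:rkineq}, so $|\rk - k| \ge d/2$, and the $t_i$-overlap bookkeeping (each $t_{i'} \le \min\{k-d/2, n_{i'-1}\}$) ensures the column counts match up. Once the distance is established, the cardinality formula $N = \sum_i C_i S_i \prod_{j} R_j \prod_{w} M_w$ follows by counting the disjoint summands, and the $A_q$ lower bound is obtained by choosing each ingredient of maximum size.

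The main obstacle I expect is the \emph{bookkeeping of the column widths and the overlap shifts} $t_i$: one must check that in every cross-summand pair the indices align so that a single distinguished block of width compatible with Lemma~\ref{lem:main_multipleblocks} can be isolated, and that the rank bound $\rk \le k-d/2$ genuinely holds there given the definitions of $\mathcal{S}_{i-1}$ (width $n_{i-1}-t_i$, rank $\le k-d/2-t_i$) and the $t_i$-column overlap inherited from the $\mathcal{C}_i$ part. This is the place where the two generalized constructions—Theorem~\ref{theo:generalized_linkage} and \cite[Lemma~4.1]{coss2019combining}—must be reconciled, and where an off-by-$t_i$ error would break the proof; everything else is a routine promotion of the two-block argument to $m$ blocks via the permutation invariance already recorded in Lemma~\ref{lem:main_multipleblocks}.
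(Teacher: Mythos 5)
Your proposal is correct and follows essentially the same route as the paper's proof: same-summand pairs are handled by cases (1) and (2) of Lemma~\ref{lem:main_multipleblocks} exactly as you describe, and cross-summand pairs by case (3) via the bound $\rk(s_{i'-1}' \mid [c_{i'}']) \le (k-d/2-t_{i'})+t_{i'}=k-d/2$ (respectively $\rk r_i' \le k-d/2$ when $i+2\le i'$) measured against the full-rank block $c_i$. Note only that this decisive rank comparison takes place at block position $i$, where $c_i$ sits, rather than at position $i'$ as your prose suggests; the inequality you actually compute is nevertheless exactly the one the paper uses.
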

\begin{proof}
The distinctness of codewords follows from the minimum distance.

Let $\tau^{-1}( r_1 \mid \ldots \mid r_{i-2} \mid s_{i-1} \mid c_i \mid m_{i+1} \mid \ldots \mid m_{m} )$ and $\tau^{-1}( r_1' \mid \ldots \mid r_{i-2}' \mid s_{i-1}' \mid c_i' \mid m_{i+1}' \mid \ldots \mid m_{m}' )$ be two distinct codewords in the same subcode.

If $c_i = c_i'$, then (1) in Lemma~\ref{lem:main_multipleblocks} implies the minimum distance using the minimum subspace distance of $\mathcal{C}_i$.
Else, by distinctness, there is a $1 \le j \le i-2$ with $r_j \ne r_j'$ or $s_{i-1} \ne s_{i-1}'$ or there is a $i+1 \le w \le m$ with $m_w \ne m_w'$.
We abbreviate all cases in $x \ne x'$ for $x \in \{r_j,s_{i-1},m_w\}$.
Then, by the minimum rank distance and Inequality~\eqref{math:rkineq}, we have $d/2 \le \distr(x,x') \le \distr( (r_1 \mid \ldots \mid r_{i-2} \mid s_{i-1} \mid m_{i+1} \mid \ldots \mid m_{m}), (r_1' \mid \ldots \mid r_{i-2}' \mid s_{i-1}' \mid m_{i+1}' \mid \ldots \mid m_{m}') )$ so that (2) in Lemma~\ref{lem:main_multipleblocks} concludes this case.

Let $\tau^{-1}( r_1 \mid \ldots \mid r_{i-2} \mid s_{i-1} \mid c_{i} \mid m_{i+1} \mid \ldots \mid m_{m} )$ and $\tau^{-1}( r_1' \mid \ldots \mid r_{i'-2}' \mid s_{i'-1}' \mid c_{i'}' \mid m_{i'+1}' \mid \ldots \mid m_{m}' )$ be two distinct codewords in different subcodes corresponding to $i$ and $i'$, we use without loss of generality $i < i'$.

Define $x$ as $(s_{i'-1}' \mid [c_{i'}'])$ if $i+1 = i'$, where $[c_{i'}']$ is the matrix consisting of the leftmost $t_i$ columns of $c_{i'}'$, and as $r_i'$ if $i+2 \le i'$.
In the first case, we have $\rk x \le \rk(s_{i'-1}') + \rk([c_{i'}']) \le (k-d/2-t_i) + (t_i)$ by Inequality~\eqref{math:rkineq}, so that $\rk x \le k-d/2$ in both cases.

Then we have $|\rk c_i - \rk x| = \rk c_i - \rk x \ge k - (k-d/2) = d/2$ and (3) in Lemma~\ref{lem:main_multipleblocks} concludes this case.
\end{proof}

This relates to other constructions as follows.
If we set $m=2$, we obtain Theorem~\ref{theo:generalized_linkage}.
Using $t=0$, we get \cite[Lemma~4.1]{coss2019combining}.
If $m=s+1$, $n_i=n$, $t_i=0$, $d=2(n-t)$, $k=n$, $\mathcal{C}_i = \{\tau^{-1}(I)\}$, we get \cite[Theorem~4.1]{chen2019new}.

\section{New CDCs and better lower bounds}\label{sec:comparison}

According to the numerical evidence of \url{http://subspacecodes.uni-bayreuth.de/}, the generalized linkage construction (Theorem~\ref{theo:generalized_linkage}) increases all known lower bounds on $A_q(v,4;4)$ for all listed parameters $q$ and $12 \le v$.

In the setting of $q=2$, the previously best known lower bound $A_2(12,4;4) \ge 19\,664\,917$ is given by the improved linkage construction (Theorem~\ref{theo:linkageHK}), our new generalized linkage construction (Theorem~\ref{theo:generalized_linkage}) increases the bound to $A_2(12,4;4) \ge 19\,673\,821$, while the parallel linkage construction (Theorem~\ref{theo:linkageCHWX}) only creates codes of size $19\,297\,741$.

Hence, we compare the sizes of CDCs with $v=12$ and $d=k=4$ constructed by these three constructions for all $q$.

The size of the code constructed in Theorem~\ref{theo:generalized_linkage} using $r=8$ and $t=0$, so that $s=4$, is
\begin{align*}
&A_q(r,d;k) \cdot M(q,k,s,d/2)
\\+&A_q(s+t,d;k) \cdot \Lambda(q,k,r-t,d/2,k-d/2-t)
\\\ge&A_q(8,4;4) \cdot M(q,4,4,2) + A_q(4,4;4) \cdot \Delta(q,4,8,2,2)
\\=&A_q(8,4;4) \cdot q^{4(4-2+1)} + 1 \cdot (1+\gaussmnum{4}{2}{q}(q^8-1))
\\=&A_q(8,4;4) \cdot q^{12} + 1+\gaussmnum{4}{2}{q}(q^8-1) \stepcounter{equation}\tag{\theequation}\label{math:eqGLq844}
\\>&A_q(8,4;4) \cdot q^{12} +\gaussmnum{4}{2}{q}(q^8-1) \stepcounter{equation}\tag{\theequation}\label{math:eqm1GLq844}
\\>&q^{12} \cdot q^{12} + q^4(q^8-1)
\\=&q^{24}+q^{12}-q^4 \stepcounter{equation}\tag{\theequation}\label{math:lbGLq844}
\end{align*}
For the last inequality, we use Theorem~\ref{theo:lbAq844} and Lemma~\ref{lem:qbinestim}.

We compare the generalized linkage construction to the parallel linkage construction.
Unfortunately, the bound of Theorem~\ref{theo:upperboundLambda}, i.e., $\Lambda(q,4,4,2,2) \le q^3(q^7+q^6+q^5-q^4-q^3-q^2+1)$, is too weak to show this result in general.

\begin{lemma}\label{lem:comparison_linkageCHWK}
If $2 \le q$ is a prime power, $v=12$, and $d=k=4$, then Theorem~\ref{theo:generalized_linkage} constructs a larger code than Theorem~\ref{theo:linkageCHWX_trivialimprovement}, utilizing $\Delta$ instead of $\Lambda$ in the latter.
\end{lemma}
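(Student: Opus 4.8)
The plan is to compare the two explicit lower bounds on $A_q(12,4;4)$ side by side for the specific parameters $v=12$, $d=k=4$, and to show the generalized linkage bound dominates. First I would write down the size produced by Theorem~\ref{theo:generalized_linkage} with the choice $r=8$, $s=4$, $t=0$, which Equation~\eqref{math:eqGLq844} already records as
\begin{align*}
A_q(8,4;4) \cdot q^{12} + 1 + \gaussmnum{4}{2}{q}(q^8-1).
\end{align*}
Against this I would set the size produced by Theorem~\ref{theo:linkageCHWX_trivialimprovement} with $k=4$ and $s=8$, namely
\begin{align*}
M(q,4,8,2) + A_q(8,4;4) \cdot \Delta(q,4,4,2,2),
\end{align*}
where I substitute $\Delta$ for $\Lambda$ in the $\mathcal{R}$-factor as the statement instructs, since the exact value of $\Lambda(q,4,4,2,2)$ is unavailable and the upper bound from Theorem~\ref{theo:upperboundLambda} is explicitly flagged as too weak.

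Next I would evaluate the two competing summands. Both bounds carry a leading term proportional to $A_q(8,4;4)$: on the generalized side the coefficient is $q^{12}$, whereas on the parallel side it is $\Delta(q,4,4,2,2)$. Using Theorem~\ref{theo:rankdistributionMRD} I would compute $\Delta(q,4,4,2,2) = 1 + D(q,4,4,2,2) = q(q^7+q^6+2q^5+q^4-q^2-2q-1)$ (the lower bound on $\Lambda(q,4,4,2,2)$ already displayed after Inequality~\eqref{math:ineqDeltaLambdaM}), which is strictly smaller than $q^{12}$. So the generalized construction wins on the large $A_q(8,4;4)$-weighted term by a margin of $A_q(8,4;4)\cdot(q^{12}-\Delta(q,4,4,2,2))$. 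On the other hand, the parallel construction has the standalone additive term $M(q,4,8,2)=q^{8\cdot 3}=q^{24}$, which is large, while the generalized construction only adds $1+\gaussmnum{4}{2}{q}(q^8-1)$, of order $q^{12}$. Thus the inequality I must establish rearranges to
\begin{align*}
A_q(8,4;4)\cdot\left(q^{12}-\Delta(q,4,4,2,2)\right) + 1 + \gaussmnum{4}{2}{q}(q^8-1) > q^{24}.
\end{align*}

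The heart of the argument, and the main obstacle, is that I cannot simply bound $A_q(8,4;4)$ from below by the crude $q^{12}$ used in Inequality~\eqref{math:lbGLq844}: that estimate discards exactly the lower-order terms that decide this race, since $q^{12}\cdot(q^{12}-\Delta)$ is roughly $q^{24}$ minus a term of order $q^{20}$, which would lose to $q^{24}$. Instead I would invoke the sharper bound of Theorem~\ref{theo:lbAq844}, $A_q(8,4;4)\ge q^{12}+q^2(q^2+1)^2(q^2+q+1)+1$, whose surplus over $q^{12}$ is of order $q^8$. Multiplying this surplus by the gap $q^{12}-\Delta(q,4,4,2,2)$, which is of order $q^{11}$, contributes a term of order $q^{19}$, and I must verify this contribution together with the $q^{12}\cdot(q^{12}-\Delta)$ term and the $\gaussmnum{4}{2}{q}(q^8-1)$ term outweighs $q^{24}$. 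Since $q^{12}\cdot(q^{12}-\Delta)=q^{24}-q^{12}\Delta$ and $q^{12}\Delta$ is of order $q^{20}$, this is genuinely delicate. I would therefore expand everything into an explicit polynomial in $q$ with the exact value of $\Delta(q,4,4,2,2)$ and the exact Gaussian binomial $\gaussmnum{4}{2}{q}=(q^2+1)(q^2+q+1)$, collect the difference (generalized minus parallel) as a single polynomial, and confirm it has nonnegative coefficients and is strictly positive for all prime powers $q\ge 2$. The crux is bookkeeping the cancellation at the $q^{24}$ and $q^{20}$ scales precisely enough that the lower-order positive contributions from Theorem~\ref{theo:lbAq844} and from the additive $\gaussmnum{4}{2}{q}(q^8-1)$ term are seen to close the gap; once the leading cancellations are verified, positivity of the residual polynomial for $q\ge 2$ is routine.
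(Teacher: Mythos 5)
Your proposal follows essentially the same route as the paper: the same parameter choices ($r=8$, $s=4$, $t=0$ versus $s=8$ with $\Delta(q,4,4,2,2)=1+\gaussmnum{4}{2}{q}(q^4-1)$), the same rearrangement into $A_q(8,4;4)\,(q^{12}-\Delta(q,4,4,2,2))+1+\gaussmnum{4}{2}{q}(q^8-1)>q^{24}$, the same key observation that the crude bound $A_q(8,4;4)\ge q^{12}$ is insufficient and Theorem~\ref{theo:lbAq844} must be invoked, and the same concluding polynomial verification. The only quibble is your passing claim that the difference polynomial has nonnegative coefficients --- it does not (the paper's expansion has many negative coefficients and must be estimated down to $q^{15}(q-2)(q^2+3q+7)\ge 0$) --- but since you immediately and correctly state that the real target is positivity for all prime powers $q\ge 2$, the plan is sound.
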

\begin{proof}
The size of the code constructed in Theorem~\ref{theo:linkageCHWX_trivialimprovement} uses $s=8$ and is, with $\Lambda$ replaced by $\Delta$,
\begin{align*}
&M(q,k,s,d/2) + A_q(s,d;k) \cdot \Delta(q,k,k,d/2,k-d/2)
\\=&M(q,4,8,2) + A_q(8,4;4) \cdot \Delta(q,4,4,2,2)
\\=&q^{8(4-2+1)} + A_q(8,4;4) \cdot (1+\gaussmnum{4}{2}{q}(q^4-1))
\end{align*}

Due to Equation~\eqref{math:eqGLq844}, we have
\begin{align*}
& q^{24} + A_q(8,4;4) (1+\gaussmnum{4}{2}{q}(q^4-1))
\\<& A_q(8,4;4) q^{12} + 1 + \gaussmnum{4}{2}{q}(q^8-1)
\\\Leftrightarrow& q^{24} \!-\! 1\!-\!\gaussmnum{4}{2}{q}(q^8\!-\!1) < A_q(8,4;4) ( q^{12} \!-\! 1\!-\!\gaussmnum{4}{2}{q}(q^4\!-\!1) )
.
\end{align*}

Using the lower bound of $A_q(8,4;4)$ in Theorem~\ref{theo:lbAq844} we will prove
\begin{align*}
&\frac{q^{24} - 1 - \gaussmnum{4}{2}{q}(q^8-1)}{q^{12} - 1 - \gaussmnum{4}{2}{q}(q^4-1)} < q^{12}\!+\!q^2(q^2\!+\!1)^2(q^2\!+\!q\!+\!1)\!+\!1
\\\Leftrightarrow&\frac{q^{24} - 1 - (q^2+1)(q^2+q+1)(q^8-1)}{q^{12} - 1 - (q^2+1)(q^2+q+1)(q^4-1)}
\\<&q^{12}+q^2(q^2+1)^2(q^2+q+1)+1
\\\Leftarrow&\frac{q^{24} - q^2 q^2(q^8-1)}{q^{12} - 1 - (q^2+1)(q^2+q+1)q^4}
\\\le& q^{12}+q^2(q^2+1)^2(q^2+q+1)
\end{align*}
which is equivalent to the nonnegativity of
\begin{align*}
&(q^{12}+q^2(q^2+1)^2(q^2+q+1))
\\\cdot&(q^{12} \!-\! 1 \!-\! (q^2+1)(q^2+q+1)q^4) \!-\!(q^{24} \!-\! q^2 q^2(q^8\!-\!1))
\\=&q^{2}(q^{16}+q^{15}+q^{14}-q^{13}-5q^{12}-8q^{11}-13q^{10}-12q^{9}
\\-&13q^{8}-8q^{7}-7q^{6}-3q^{5}-4q^{4}-2q^{3}-4q^{2}-q-1)
\\\ge& q^2 \left(q^{16}+q^{15}+q^{14}-q^{13}-13\sum_{i=0}^{12} q^i \right)
\\\ge& q^2(q^{16}+q^{15}+q^{14}-q^{13}-13q^{13})
\\=& q^{15}(q-2)(q^{2}+3q+7)
.
\end{align*}
Since the last term is nonnegative for $2 \le q$, the statement follows.
\end{proof}

We compare the generalized linkage construction to the improved linkage construction.

\begin{lemma}\label{lem:comparison_linkageHK}
If $2 \le q$ is a prime power, $v=12$, and $d=k=4$, then Theorem~\ref{theo:generalized_linkage} constructs a larger code than Theorem~\ref{theo:linkageHK}.
\end{lemma}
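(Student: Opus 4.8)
The plan is to write down the size of the code produced by the improved linkage construction (Theorem~\ref{theo:linkageHK}) in the regime $v=12$, $d=k=4$, and compare it to the lower bound already established for the generalized linkage construction in Inequality~\eqref{math:lbGLq844}, namely $q^{24}+q^{12}-q^4$. For Theorem~\ref{theo:linkageHK} with $t=k-d/2=2$, the parameters force $\mathcal{R}=\{0\}$, so the construction degenerates: the second summand $A_q(s+t,d;k) \cdot \Lambda(q,k,r-t,d/2,0)$ collapses because $\Lambda(q,4,r-2,2,0)=1$ by (1) of Theorem~\ref{theo:lambdaequalities} (since $2u=0<d=4$). The most natural choice is $r=8$, $s=4$, giving size $A_q(8,4;4) \cdot M(q,4,4,2) + A_q(4+2,4;4) = A_q(8,4;4) \cdot q^{12} + A_q(6,4;4)$. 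Since $A_q(6,4;4)=A_q(6,4;2)$ by Lemma~\ref{lem:AqvdkEqAqvdvmk}, and a $4$-dimensional codeword in $\F_q^6$ forces pairwise intersections of dimension at least $2$, one has $A_q(6,4;4)=q^2+1$ by the spread bound (Theorem~\ref{theo:spread}) applied to the dual parameters, or a small explicit count.

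\begin{proof}
First I would record the size of the code from Theorem~\ref{theo:linkageHK}. With $v=r+s=12$, $d=k=4$, $t=k-d/2=2$, the hypothesis forces $\mathcal{R}=\{0\}$, i.e. the relevant rank-restricted code has $\Lambda(q,4,r-2,2,0)=1$ by (1) of Theorem~\ref{theo:lambdaequalities}. Taking $r=8$, $s=4$ gives
\begin{align*}
&A_q(8,4;4) \cdot M(q,4,4,2) + A_q(6,4;4)
\\=& A_q(8,4;4) \cdot q^{12} + A_q(6,4;4).
\end{align*}
By Lemma~\ref{lem:AqvdkEqAqvdvmk} and Theorem~\ref{theo:spread}, $A_q(6,4;4)=A_q(6,4;2)=[6]_q/[2]_q=q^4+q^2+1$. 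The generalized linkage construction (Theorem~\ref{theo:generalized_linkage}), using $r=8$, $t=0$, $s=4$, produces a code of size at least
\begin{align*}
A_q(8,4;4) \cdot q^{12} + 1 + \gaussmnum{4}{2}{q}(q^8-1)
\end{align*}
by Equation~\eqref{math:eqGLq844}. Comparing the two, the common term $A_q(8,4;4) \cdot q^{12}$ cancels, and it suffices to show
\begin{align*}
1 + \gaussmnum{4}{2}{q}(q^8-1) > q^4+q^2+1,
\end{align*}
i.e. $\gaussmnum{4}{2}{q}(q^8-1) > q^4+q^2$. By Lemma~\ref{lem:qbinestim} we have $\gaussmnum{4}{2}{q} > q^4$, whence $\gaussmnum{4}{2}{q}(q^8-1) > q^4(q^8-1) = q^{12}-q^4$, which dominates $q^4+q^2$ for all $q \ge 2$. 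This proves the strict inequality.
\end{proof}

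**Main obstacle.** The one delicate point is ensuring that the chosen split $r=8$, $s=4$ is genuinely the best that Theorem~\ref{theo:linkageHK} can do here, rather than merely a convenient one; in principle another split $(r,s)$ with $r+s=12$, $s+t=s+2$ could yield a larger improved-linkage code. I would therefore check the alternative splits allowed by $k \le r$ and $k \le s+t$, namely $r \in \{4,\dots,8\}$, and verify that none exceeds the generalized-linkage bound $q^{24}+q^{12}-q^4$; since $M(q,4,s,2)=q^{3s}$ grows with $s$, the dominant contribution comes from maximizing the LMRD part, and the split $r=8$, $s=4$ (equivalently the largest feasible $s$) is the relevant one. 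The remaining estimates reduce, as above, to the single crude bound $\gaussmnum{4}{2}{q} > q^4$ from Lemma~\ref{lem:qbinestim}, so no intricate polynomial manipulation as in Lemma~\ref{lem:comparison_linkageCHWK} should be needed.
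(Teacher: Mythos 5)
There is a genuine gap: you only prove the comparison for one admissible parameter choice in Theorem~\ref{theo:linkageHK}, namely $r=8$, $s=4$, and your justification for dismissing the other splits is incorrect. The lemma requires beating \emph{every} code that Theorem~\ref{theo:linkageHK} can produce with $v=12$, $d=k=4$; the admissible range is $4 \le r \le 10$ (your stated range $r\in\{4,\dots,8\}$ already omits $r=9,10$, since the constraint $k \le s+t$ only forces $s \ge 2$). The heuristic that ``the dominant contribution comes from maximizing the LMRD part, so $r=8$ is the relevant split'' fails precisely at $r=4$: there $A_q(4,4;4)=1$ and $M(q,4,8,2)=q^{24}$, so the improved linkage yields $q^{24}+A_q(10,4;4)$, and $A_q(10,4;4)$ is of order $q^{18}$. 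This \emph{exceeds} the crude generalized-linkage bound $q^{24}+q^{12}-q^4$ of Inequality~\eqref{math:lbGLq844} that you propose to compare against, so the $r=4$ case cannot be waved away. Beating it requires the sharper form~\eqref{math:eqm1GLq844} together with the Cossidente--Pavese bound of Theorem~\ref{theo:lbAq844} (which contributes the $q^{20}$-order term $q^2(q^2+1)^2(q^2+q+1)\cdot q^{12}$) and an explicit Anticode upper bound on $A_q(10,4;4)$; this polynomial comparison is the bulk of the paper's proof. Similarly, $r=7$ gives $A_q(7,4;3)(q^{15}+1)$, which the constant $3.47$ from Lemma~\ref{lem:qbinestim} does not suffice to dominate by $q^{24}$ when $q\in\{2,3\}$, and $r=9$ needs a separate treatment for $q=2$; the paper handles both with explicit polynomial estimates.

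Your $r=8$ computation itself is correct and agrees with the paper's (the common term $A_q(8,4;4)\cdot q^{12}$ cancels and the residual inequality $\gaussmnum{4}{2}{q}(q^8-1) > q^4+q^2$ is immediate), and the slip $A_q(6,4;4)=q^2+1$ in your preamble is corrected to $q^4+q^2+1$ in the proof body. But as written the argument establishes the lemma only against one competitor, not against the construction as a whole; the cases $r=4$ and $r=7$ are the substantive content and are missing.
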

\begin{proof}

The size of the code constructed in Theorem~\ref{theo:linkageHK} using $t=2$, $s=12-r$, and $4 \le r \le 10$ is
\begin{align*}
&A_q(r,d;k) \cdot M(q,k,s,d/2) + A_q(s+k-d/2,d;k)
\\=&A_q(r,4;4) \cdot M(q,4,12-r,2) + A_q(14-r,d;4)
\\=&A_q(r,4;4) \cdot q^{\max\{12-r,4\}(\min\{12-r,4\}-1)} + A_q(14-r,4;4)
\\=&
\begin{cases}
A_q(r,4;4) \cdot q^{3(12-r)} + A_q(14-r,4;4) & \text{if } 4 \le r \le 8 \\
A_q(r,4;4) \cdot q^{4(11-r)} + A_q(14-r,4;4) & \text{if } 9 \le r \le 10 \\
\end{cases}
\end{align*}

Using Lemma~\ref{lem:AqvdkEqAqvdvmk} and Theorem~\ref{theo:spread}, we have $A_q(4,4;4)=A_q(4,4;0)=1$, $A_q(5,4;4)=A_q(5,4;1)=1$, $A_q(6,4;4)=A_q(6,4;2)=[6]_q/[2]_q=q^4+q^2+1$, and $A_q(7,4;4)=A_q(7,4;3)$.

For $4 \le x$, the Anticode bound in Theorem~\ref{theo:anticode} and Lemma~\ref{lem:qbinestim} imply $A_q(x,4;4) \le \gaussmnum{x}{4-2+1}{q}/\gaussmnum{4}{4-2+1}{q} < 4q^{3(x-3)}/q^3 = 4q^{3x-12}$.

For $r=4$, Theorem~\ref{theo:linkageHK} yields a CDC of size $q^{24} + A_q(10,4;4)$.
Then, comparing to Inequality~\eqref{math:eqm1GLq844}, i.e.,
\begin{align*}
&A_q(8,4;4) \cdot q^{12} + \gaussmnum{4}{2}{q}(q^8-1) - q^{24} - A_q(10,4;4)
\\\ge&(q^{12}+q^2(q^2+1)^2(q^2+q+1)+1) \cdot q^{12}
\\+& (q^8-1)[4]_q[3]_q/[2]_q - q^{24} - [10]_q[9]_q[8]_q/([4]_q[3]_q[2]_q)
\\=&q^{20}+q^{19}+2q^{18}+2q^{17}+2q^{16}-q^{14}-q^{13}-q^{12}-q^{11}
\\-&q^{10}-q^{9}-2q^{8}-2q^{7}-3q^{6}-q^{5}-3q^{4}-2q^{3}-3q^{2}-q-2
\\>&q^{20}-3\sum_{i=0}^{14}q^i
=q^{20}-3[15]_q
>q^{20}-3q^{15}
>0
,
\end{align*}
concludes this case.

For $r=5$, Theorem~\ref{theo:linkageHK} yields a CDC of size $q^{21} + A_q(9,4;4) \le q^{21} + 4q^{15} \le q^{21} + q^{17} \le q^{22}$, so that Inequality~\eqref{math:lbGLq844} concludes this case.

For $r=6$, Theorem~\ref{theo:linkageHK} yields a CDC of size $([6]_q/[2]_q) q^{18} + A_q(8,4;4) \le 2q^{22} + 4q^{12} \le q^{23} + q^{14} \le q^{24}$, so that Inequality~\eqref{math:lbGLq844} concludes this case.

For $r=7$, Theorem~\ref{theo:linkageHK} yields a CDC of size $A_q(7,4;3) (q^{15}+1)$.
Then, comparing to Inequality~\eqref{math:eqm1GLq844}, i.e.,
\begin{align*}
&A_q(8,4;4) q^{12} + \gaussmnum{4}{2}{q}(q^8-1) - A_q(7,4;3) (q^{15}+1)
\\\ge&q^{24} + (q^8-1)[4]_q[3]_q/[2]_q - (q^{15}+1)[7]_q[6]_q/([3]_q[2]_q)
\\=&q^{24}-q^{23}-q^{21}-q^{20}-q^{19}-q^{18}-q^{17}-q^{15}+q^{12}+q^{11}
\\+&2q^{10}+q^{9}-q^{6}-q^{5}-2q^{4}-2q^{3}-3q^{2}-q-2
\\>&q^{24}-q^{23}-q^{21}-3\sum_{i=0}^{20} q^i
=q^{24}-q^{23}-q^{21}-3[21]_q
\\>&q^{24}-q^{23}-4q^{21}
=(q-2)(q^2+q+2)q^{21}
\ge 0
,
\end{align*}
concludes this case.

For $r=8$, Theorem~\ref{theo:linkageHK} yields a CDC of size $A_q(8,4;4) q^{12} + [6]_q/[2]_q$.
Then, comparing to Inequality~\eqref{math:eqm1GLq844}, i.e.,
\begin{align*}
&A_q(8,4;4) q^{12} + [6]_q/[2]_q \le A_q(8,4;4) \cdot q^{12} + \gaussmnum{4}{2}{q}(q^8-1)
\\\Leftrightarrow&[6]_q/[2]_q \le (q^8-1)[4]_q[3]_q/[2]_q
\\\Leftrightarrow&[6]_q \le (q^8-1)[4]_q[3]_q
\\\Leftarrow&q^6-1 < q^8-1
,
\end{align*}
concludes this case.

For $r=9$, Theorem~\ref{theo:linkageHK} yields a CDC of size $A_q(9,4;4) q^{8} + 1 \le 4q^{23} + 1$, so that Inequality~\eqref{math:lbGLq844} concludes this case for all $4 \le q$.
Next, we use $(\prod_{i=1}^{\infty}(1-3^{-i}))^{-1} < 1.8$ in Lemma~\ref{lem:qbinestim}, so that $A_q(9,4;4) q^{8} + 1 \le 1.8q^{23} + 1$ and Inequality~\eqref{math:lbGLq844} concludes this case for $q=3$, too.
Last, the Anticode bound is precisely $A_2(9,4;4) \le 52\,535$, so that $A_2(9,4;4) 2^{8} + 1 \le 13\,448\,961 < 16\,781\,296 = 2^{24} + 2^{12} -2^4$, concluding the case $r=9$ for all $q$.

For $r=10$, Theorem~\ref{theo:linkageHK} yields a CDC of size $A_q(10,4;4) q^{4} + 1 \le 4q^{22} + 1 \le q^{24} + 1$, so that Inequality~\eqref{math:lbGLq844} concludes this case.

\end{proof}


\begin{thebibliography}{10}

\bibitem{MR404010}
A.~Beutelspacher.
\newblock Partial spreads in finite projective spaces and partial designs.
\newblock {\em Math. Z.}, 145(3):211--229, 1975.

\bibitem{chen2019new}
H.~Chen, X.~He, J.~Weng, and L.~Xu.
\newblock New constructions of subspace codes using subsets of {MRD} codes in
  several blocks.
\newblock {\em arXiv:1908.03804}, 2019.

\bibitem{coss2019combining}
A.~Cossidente, S.~Kurz, G.~Marino, and F.~Pavese.
\newblock Combining subspace codes.
\newblock {\em arXiv:1911.03387}, 2019.

\bibitem{MR3759908}
A.~Cossidente and F.~Pavese.
\newblock Subspace codes in {$\mathrm{PG}(2N-1, Q)$}.
\newblock {\em Combinatorica}, 37(6):1073--1095, 2017.

\bibitem{de2015rank}
J.~de~la Cruz, E.~Gorla, H.~H. L\'{o}pez, and A.~Ravagnani.
\newblock Rank distribution of {D}elsarte codes.
\newblock {\em arXiv:1510.01008}, 2015.

\bibitem{MR514618}
P.~Delsarte.
\newblock Bilinear forms over a finite field, with applications to coding
  theory.
\newblock {\em J. Combin. Theory Ser. A}, 25(3):226--241, 1978.

\bibitem{MR2589964}
T.~Etzion and N.~Silberstein.
\newblock Error-correcting codes in projective spaces via rank-metric codes and
  {F}errers diagrams.
\newblock {\em IEEE Trans. Inform. Theory}, 55(7):2909--2919, 2009.

\bibitem{MR3015712}
T.~Etzion and N.~Silberstein.
\newblock Codes and designs related to lifted {MRD} codes.
\newblock {\em IEEE Trans. Inform. Theory}, 59(2):1004--1017, 2013.

\bibitem{MR2810308}
T.~Etzion and A.~Vardy.
\newblock Error-correcting codes in projective space.
\newblock {\em IEEE Trans. Inform. Theory}, 57(2):1165--1173, 2011.

\bibitem{MR1533848}
S.~D. Fisher and M.~N. Alexander.
\newblock Classroom {N}otes: {M}atrices over a {F}inite {F}ield.
\newblock {\em Amer. Math. Monthly}, 73(6):639--641, 1966.

\bibitem{MR791529}
E.~M. Gabidulin.
\newblock Theory of codes with maximum rank distance.
\newblock {\em Problemy Peredachi Informatsii}, 21(1):3--16, 1985.

\bibitem{MR2798987}
M.~Gadouleau and Z.~Yan.
\newblock Constant-rank codes and their connection to
  constant-dimension codes.
\newblock {\em IEEE Trans. Inform. Theory}, 56(7):3207--3216, 2010.

\bibitem{GAP4}
The GAP~Group.
\newblock {\em {GAP -- Groups, Algorithms, and Programming, Version 4.10.0}},
  2019.

\bibitem{MR3348437}
H.~Gluesing-Luerssen, K.~Morrison, and C.~Troha.
\newblock Cyclic orbit codes and stabilizer subfields.
\newblock {\em Adv. Math. Commun.}, 9(2):177--197, 2015.

\bibitem{MR3543532}
H.~Gluesing-Luerssen and C.~Troha.
\newblock Construction of subspace codes through linkage.
\newblock {\em Adv. Math. Commun.}, 10(3):525--540, 2016.

\bibitem{he2019construction}
X.~He.
\newblock Construction of Const Dimension Code from Two Parallel
  Versions of Linkage Construction.
\newblock {\em arXiv:1910.04472}, 2019.

\bibitem{ubtepub4049}
D.~Heinlein.
\newblock {\em Integer linear programming techniques for constant dimension
  codes and related structures}.
\newblock PhD thesis, Bayreuth, November 2018.

\bibitem{MR3988525}
D.~Heinlein.
\newblock New {LMRD} code bounds for constant dimension codes and improved
  constructions.
\newblock {\em IEEE Trans. Inform. Theory}, 65(8):4822--4830, 2019.

\bibitem{HKKW2016Tables}
D.~Heinlein, M.~Kiermaier, S.~Kurz, and A.~Wassermann.
\newblock Tables of subspace codes.
\newblock {\em arXiv:1601.02864}, 2016.

\bibitem{MR3705116}
D.~Heinlein and S.~Kurz.
\newblock Asymptotic bounds for the sizes of constant dimension codes and an
  improved lower bound.
\newblock In {\em Coding theory and applications}, volume 10495 of {\em Lecture
  Notes in Comput. Sci.}, pages 163--191. Springer, Cham, 2017.

\bibitem{MR2451015}
R.~K\"{o}tter and F.~R. Kschischang.
\newblock Coding for errors and erasures in random network coding.
\newblock {\em IEEE Trans. Inform. Theory}, 54(8):3579--3591, 2008.

\bibitem{kurz2019note}
S.~Kurz.
\newblock A note on the linkage construction for constant dimension codes.
\newblock {\em arXiv:1906.09780}, 2019.

\bibitem{MR1580299}
G.~Landsberg.
\newblock Ueber eine {A}nzahlbestimmung und eine damit zusammenh\"{a}ngende
  {R}eihe.
\newblock {\em J. Reine Angew. Math.}, 111:87--88, 1893.

\bibitem{niskanen2003cliquer}
S.~Niskanen and P.~R.~J. {\"O}sterg{\aa}rd.
\newblock {\em Cliquer User's Guide: Version 1.0}.
\newblock Helsinki University of Technology Helsinki, Finland, 2003.

\bibitem{MR3367813}
N.~Silberstein and A.-L. Trautmann.
\newblock Subspace codes based on graph matchings, {F}errers diagrams, and
  pending blocks.
\newblock {\em IEEE Trans. Inform. Theory}, 61(7):3937--3953, 2015.

\bibitem{MR1984479}
H.~Wang, C.~Xing, and R.~Safavi-Naini.
\newblock Linear authentication codes: bounds and constructions.
\newblock {\em IEEE Trans. Inform. Theory}, 49(4):866--872, 2003.

\bibitem{MR3849557}
L.~Xu and H.~Chen.
\newblock New constant-dimension subspace codes from maximum rank distance
  codes.
\newblock {\em IEEE Trans. Inform. Theory}, 64(9):6315--6319, 2018.

\end{thebibliography}
\end{document}